\documentclass{amsart}
\oddsidemargin =10mm
\evensidemargin =10mm
\topmargin =5mm
\textwidth =150mm
\textheight =200mm
\usepackage{amssymb}
\usepackage{mathrsfs}
\usepackage{stmaryrd}
\usepackage{bbm}
\usepackage{oldgerm}
\usepackage[english]{babel}
\usepackage[T1]{fontenc}
\usepackage[latin1]{inputenc}
\usepackage[all]{xy}
\usepackage{mathabx}
\usepackage{graphicx}
\usepackage[colorlinks,linkcolor=red,anchorcolor=red,citecolor=blue]{hyperref}

\newtheorem{theorem}[subsection]{Theorem}
\newtheorem{proposition}[subsection]{Proposition}
\newtheorem{corollary}[subsection]{Corollary}
\newtheorem{lemma}[subsection]{Lemma}

\theoremstyle{definition}

\newtheorem{remark}[subsection]{Remark}

\newtheorem{example}[subsection]{Example}
\numberwithin{equation}{subsection}

\setcounter{tocdepth}{1}

\newcommand{\be}{\begin{equation}}
\newcommand{\ee}{\end{equation}}

\newcommand{\ct}{{\mathbb T^\ast}}

\newcommand{\rR}{\mathrm R}
\newcommand{\rP}{\mathrm P}
\newcommand{\rpr}{\mathrm{pr}}

\newcommand{\bT}{\mathbb T}
\newcommand{\bh}{{\mathbb H}}
\newcommand{\bH}{\mathbb H}
\newcommand{\bP}{\mathbb P}
\newcommand{\bp}{{\mathbb P}}
\newcommand{\bA}{\mathbb A}
\newcommand{\cf}{{\mathcal F}}
\newcommand{\cF}{\mathcal F}
\newcommand{\cG}{\mathcal G}
\newcommand{\cK}{\mathcal K}
\newcommand{\cC}{\mathcal C}
\newcommand{\cO}{\mathcal O}

\newcommand{\red}{\mathrm{red}}
\newcommand{\supp}{\mathrm{supp}}

\newcommand{\bZ}{\mathbb Z}
\newcommand{\bV}{\mathbb V}
\newcommand{\bG}{\mathbb G}
\newcommand{\bD}{\mathbb D}
\newcommand{\wt}{\widetilde}

\newcommand{\dimtot}{\mathrm{dimtot}}

\begin{document}

\title[Relative singular support and the semi-continuity of characteristic cycles]{Relative singular support and the semi-continuity of characteristic cycles for \'etale sheaves}

\author{Haoyu Hu}
\address{Graduate School of Mathematical Sciences, the University of Tokyo, 3-8-1 Komaba Meguro-ku Tokyo 153-8914, Japan}
\email{huhaoyu@ms.u-tokyo.ac.jp, huhaoyu1987@163.com}

\author{Enlin Yang}
\address{Fakult\"at f\"ur Mathematik, Universit\"at Regensburg, 93040 Regensburg, Germany}
\email{ enlin.yang@mathematik.uni-regensburg.de, yangenlin0727@126.com}



\subjclass[2000]{Primary 14F05; Secondary 14C25}



\keywords{Semi-continuity, relative singular support, relative characteristic cycle}

\begin{abstract}
Recently, the singular support and the characteristic cycle of an \'etale sheaf on a smooth variety over a perfect field are constructed by Beilinson and Saito, respectively. In this article, we extend the singular support to a relative situation. As an application, we prove the generic constancy for singular supports and characteristic cycles of \'etale sheaves on a smooth fibration. Meanwhile, we show the failure of the lower semi-continuity of characteristic cycles in a higher relative dimension case, which is different from Deligne and Laumon's result in the relative curve case.

\end{abstract}
\maketitle

\tableofcontents

\section{Introduction}

\subsection{}
Let $Y$ be a smooth complex algebraic variety of equidimension $d$, $\mathcal D_Y$ the sheaf of differential operators on $Y$ and $\cG$ a holonomic left $\mathcal D_Y$-module. We  can associate two geometric invariants to $\cG$. One is the singular support $SS(\mathbb \cG)$, which is a closed subset of the cotangent bundle $\bT^*Y$ of equidimension $d$. The other one is the characteristic cycle $CC(\cG)$, which is a cycle of $\bT^*Y$ supported on $SS(\mathbb \cG)$.
 They contain important information on the $\mathcal D_Y$-module $\cG$. For example, assuming that $Y$ is projective over $\mathbb C$, we have \cite{Dubson,ka83}
 \begin{equation}\label{dmodindex}
 \chi(Y,\mathrm{DR}_Y(\cG))=\mathrm{deg}(\bT^*_YY,CC(\cG))_{\bT^*Y},
 \end{equation}
where $\chi(Y,\mathrm{DR}_Y(\cG))$ denotes the Euler-Poincar\'e characteristic of the de Rham complex $\mathrm{DR}_Y(\cG)=\Omega^1_{Y/\mathbb C}\otimes^L_{{\mathcal D}_Y}\cG$  and $\bT^*_YY$ the zero-section of $\bT^*Y$.

\subsection{}\label{ladiccc}
It is well known that $\ell$-adic \'etale sheaves are an analogue of algebraic $\mathcal D$-modules. Let $X$ be a smooth scheme over a field $k$, $\ell$ a prime number invertible in $k$, $\Lambda$ a finite field of characteristic $\ell$ and $\cK$ a bounded complex of constructible sheaves of $\Lambda$-modules on $X$.
Recently, an important breakthrough is achieved by Beilinson. Using a Radon transform method initiated by Brylinski, he defined the singular support $SS(\cK)$ of $\cK$ and proved the existence of $SS(\cK)$ \cite{Beilinson15}. It is a closed conical subset of the cotangent bundle $\bT^*X$. When $X$ is equidimensional, he proved that every irreducible component of $SS(\cK)$ has the same dimension as $X$ ({\it loc. cit.}).

Assume that $k$ is perfect. Based on Beilinsion's results, Saito constructed the characteristic cycle $CC(\cK)$ of $\cK$ as an algebraic cycle of $\bT^*X$ supported on $SS(\cK)$ \cite{Saito15-2}. The coefficient of each component of $CC(\cK)$ is obtain by the total dimension of the stalk of a vanishing cycles complex associated to $\cK$.
The cycle $CC(\cK)$ is positive if $\cK$ is perverse.
When $X$ is projective over $k$, he proved the following index formula ({\it loc. cit.})
\begin{equation}\label{introindex}
\chi(X_{\bar k},\cK|_{X_{\bar k}})=\deg(\bT^*_XX,CC(\cK))_{\bT^*X},
\end{equation}
where $\bar k$ denotes an algebraic closure of $k$ and $\bT^*_XX$ the zero-section of $\bT^*X$.
It is similar to the one for $\mathcal D$-modules \eqref{dmodindex}.

Assume that $X$ is a connected smooth curve over $k$ and that $\cK$ is the extension by zero of a locally constant and constructible sheaf of $\Lambda$-modules on an open dense subset $U$ of $X$. Then, we have
\begin{align}
CC(\cK)&=-\mathrm{rk}_{{\Lambda}}(\cK|_U)\cdot[\bT^*_XX]-\sum_{x\in X-U}\mathrm{dimtot}_x(\cK)\cdot[\bT^*_xX],
\end{align}
where $\mathrm{dimtot}_x(\cK)$ denotes the total dimension of $\cK$ at $x$ (\ref{defdimtot}) and $\bT^*_xX$ the fiber $\bT^*X\times_Xx$. The singular support $SS(\cK)$ is the reduced closed subscheme of $\bT^*X$ where $CC(\cK)$ is supported.  The index formula \eqref{introindex} is the well-known Grothendieck-Ogg-Shafarevich formula \cite{sga5}.

\subsection{}\label{introsetting}
In the rest of the introduction, let $S$ be an excellent Noetherian scheme, $f:X\rightarrow S$ a separated and smooth morphism of purely relative dimension $d$, $D$ an effective Cartier divisor of $X$ relative to $S$, $U$ the complement of $D$ in $X$ and $j:U\rightarrow X$ the canonical injection. For any $s\in S$, we denote by $\bar s$ an algebraic geometric point above $s$ and by $X_{\bar s}$ and $D_{\bar s}$ the fibers of $f:X\rightarrow S$ and $f|_D:D\rightarrow S$ at $\bar s$. We denote by $\bT^*(X/S)$ the relative cotangent bundle on $X$ with respect to $S$  and by $\bT^*_X(X/S)$ the zero-section of $\bT^*(X/S)$ (\ref{relcotbundle}).
Let $\ell$ be a prime number invertible in $S$, $\Lambda$ a finite field of characteristic $\ell$, $\cG$ a locally constant and constructible sheaf of $\Lambda$-modules on $U$ of constant rank and $\cF=j_!\cG[d]$.

\begin{theorem}[{\cite[2.1.1, 5.1.1]{Laumon81}}]\label{introdellau}
Assume that $f:X\rightarrow S$ has relative dimension $d=1$. In this case, $f|_D:D\rightarrow S$ is quasi-finite and flat. Then, 
\begin{itemize}
\item[(i)]
There exists a closed subscheme $A$ of $\bT^*(X/S)$ which is open and of purely relative dimension $d$ over $S$ such that, for any algebraic geometric point $\bar s$ of $S$, we have $SS(\cF|_{X_{\bar s}})=(A_{\bar s})_{\mathrm{red}}$. (In fact, we take $A=(\bT^*(X/S)\times_XD)\bigcup \bT^*_X(X/S)$).
\item[(ii)]
There exists a finite and surjective morphism $\pi:S'\rightarrow S$ and a cycle $B$ of $\bT^*((X\times_SS')/S')$ supported on  $A\times_SS'$ such that there exists an open dense subset $W'$ of $S'$ and, for any algebraic geometric point $\bar s'$ of $W'$, we have
$CC(\cF|_{\bar s'})=B_{\bar s'}$ (\ref{notationfiber});
\item[(iii)]
For any algebraic geometric point $\bar s'$ of $S'-W'$, the cycle $B_{\bar s'}-CC(\cF|_{\bar s'})$ on $\bT^*X_{\bar s'}$ is effective;
\item[(iv)]
Let $V$ be an open dense subset of $S$. Then, $f_V:X\times_SV\rightarrow V$ is locally acyclic with respect to $\cF|_{X\times_SV}$ if and only if, for any algebraic geometric point $\bar s'$ of $V\times_SS'$ , we have $CC(\cF|_{\bar s'})=B_{\bar s'}$.
\end{itemize}
\end{theorem}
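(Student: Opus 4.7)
The plan is to establish part~(i) by a direct computation on each geometric fibre and to derive parts~(ii)--(iv) from two classical inputs of Deligne and Laumon: the upper semi-continuity of Swan conductors in families, and the characterisation of local acyclicity on a relative curve in terms of constancy of Swan conductors along the branches of~$D$.

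For \textbf{(i)}, fix a geometric point $\bar s\to S$. Since $\cG$ is locally constant of constant rank on $U$, the restriction satisfies $\cF|_{X_{\bar s}}=j_{\bar s,!}(\cG|_{U_{\bar s}})[d]$ with $\cG|_{U_{\bar s}}$ locally constant constructible on $U_{\bar s}$. On a smooth curve, the singular support of such a complex is, by direct inspection via the Grothendieck--Ogg--Shafarevich formula recalled in~\ref{ladiccc}, the union of the zero section and the cotangent fibres over the points of $D_{\bar s}$. This agrees with $(A_{\bar s})_{\mathrm{red}}$ for $A=(\bT^*(X/S)\times_X D)\cup\bT^*_X(X/S)$. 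The scheme $A$ is closed in $\bT^*(X/S)$ and of pure relative dimension~$1$ over~$S$, because the zero section is smooth of relative dimension~$1$ over~$S$ and $\bT^*(X/S)\times_X D$ is a line bundle over the quasi-finite flat $S$-scheme~$D$.

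For \textbf{(ii)} and \textbf{(iii)}, I would use quasi-finiteness and flatness of $f|_D$ to find, by standard finite-presentation arguments, a finite surjective $\pi\colon S'\to S$ and an open dense $W'\subset S'$ over which $D\times_S S'$ decomposes as a disjoint union of sections $\sigma_i\colon W'\to X\times_S W'$ of $f_{S'}$. For each branch~$i$, consider the function $s'\mapsto \mathrm{sw}_{\sigma_i(\bar s')}(\cG|_{U_{\bar s'}})$; by the Deligne--Laumon theorem this function is upper semi-continuous on $S'$, and after shrinking $W'$ it equals a constant generic value $r_i$. Using the curve-case formula for $CC$ recalled in~\ref{ladiccc}, I would then define the cycle $B$ on $\bT^*((X\times_S S')/S')$ by formally substituting the generic value $\mathrm{rk}(\cG)+r_i$ for the coefficient $\dimtot_{\sigma_i(\bar s')}(\cG|_{U_{\bar s'}})$ at each branch. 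By construction $B$ is supported on $A\times_S S'$ and the equality $CC(\cF|_{\bar s'})=B_{\bar s'}$ holds for every $\bar s'$ over $W'$; on the complement the inequality $\mathrm{sw}_{\sigma_i(\bar s')}\geq r_i$ furnished by semi-continuity yields the effectivity in~(iii), since the coefficients in $CC$ carry a minus sign so that a jump of Swan conductors produces a positive difference $B_{\bar s'}-CC(\cF|_{\bar s'})$.

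For \textbf{(iv)}, I would combine the above with the Deligne--Laumon criterion for local acyclicity on a relative curve: $f_V$ is locally acyclic with respect to $\cF|_{X\times_S V}$ if and only if, at each point of $D\cap f^{-1}(V)$, the Swan conductor does not jump on specialisation, which is precisely the condition $CC(\cF|_{\bar s'})=B_{\bar s'}$ for every geometric point $\bar s'$ of $V\times_S S'$. The \emph{main obstacle} is the appeal to these two inputs of Deligne--Laumon: once they are granted, the constructions of $A$ and $B$ and the verification of (i)--(iv) reduce to bookkeeping with the explicit curve-case formula for~$CC$.
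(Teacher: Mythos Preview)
The paper does not prove this theorem: it is quoted in the introduction as a known result of Deligne and Laumon \cite{Laumon81}, reformulated in the language of singular supports and characteristic cycles. Immediately after the statement the authors write that ``Theorem~\ref{introdellau} is the semi-continuity property for the Swan conductors of $\ell$-adic sheaves on relative curves due to Deligne and Laumon. Here, we formulate it in terms of characteristic cycles of $\ell$-adic sheaves.'' No argument is given in the paper; the theorem serves only as motivation for the higher-dimensional generalisations that follow.

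Your proposal is therefore not competing with any proof in the paper, but rather supplying the dictionary the authors leave implicit. In that respect your sketch is on target: part~(i) is exactly the curve formula for $SS$ recalled in~\ref{ladiccc} (and Example~\ref{curveCC}); parts~(ii)--(iv) are, as you say, a direct translation of Deligne--Laumon's upper semi-continuity of Swan conductors and their local-acyclicity criterion into the $CC$-language via the identity $CC(j_!\cG[1])=-\mathrm{rk}(\cG)[\bT^*_XX]-\sum_x\dimtot_x(\cG)[\bT^*_xX]$. Your sign check for~(iii) is correct: if $\mathrm{sw}$ jumps up on specialisation then the coefficient of $B_{\bar s'}-CC(\cF|_{\bar s'})$ at each cotangent fibre is $\mathrm{sw}_{\sigma_i(\bar s')}-r_i\ge 0$.

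One point deserves more care than ``bookkeeping'': you construct the sections $\sigma_i$ only over the open $W'\subset S'$, yet $B$ must be a cycle on all of $\bT^*((X\times_SS')/S')$, and part~(iii) concerns points of $S'\setminus W'$. You should specify how $B$ is extended across $S'\setminus W'$ (e.g.\ by taking closures of the $\bT^*_{\sigma_i(W')}(X'/S')$ inside $\bT^*(X'/S')\times_{X'}D'$, which works because $D'\to S'$ is flat) and verify that at a point where several generic branches coalesce the comparison with $CC(\cF|_{\bar s'})$ still yields an effective difference. This is handled in Laumon's paper by working with the flat cycle $[\bT^*(X'/S')\times_{X'}D']$ and its irreducible components rather than with individual sections, and the semi-continuity input is applied branch by branch after an \'etale localisation on $X'$ rather than only after the base change on $S'$.
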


Theorem \ref{introdellau} is the semi-continuity property for the Swan conductors of $\ell$-adic sheaves on relative curves due to Deligne and Laumon. Here, we formulate it in terms of characteristic cycles of $\ell$-adic sheaves. Saito asked whether the theorem can be generalized to the case where the relative dimension $d\geq 2$. In this article, we answered his question for part (i)-(iii) of Theorem \ref{introdellau}.  


\subsection{}
The main construction of this article is a relative version of singular supports of $\ell$-adic sheaves which generalizes Beilinson's one in \cite{Beilinson15}. Let $Y$ and $Z$ be two schemes smooth over $S$, $g:Y\rightarrow X$ and $h:Y\rightarrow Z$ two $S$-morphisms, and  $dg:\bT^*(X/S)\times_XY\rightarrow\bT^*(Y/S)$ and $dh:\bT^*(Z/S)\times_ZY\rightarrow\bT^*(Y/S)$ two maps induced by canonical morphisms $g^*\Omega^1_{X/S}\rightarrow \Omega^1_{Y/S}$ and $h^*\Omega^1_{Z/S}\rightarrow \Omega^1_{Y/S}$, respectively. For a closed conical subset $C$ of $\bT^*(X/S)$, we say that $g$ is $C$-{\it transversal relative to} $S$ if $(dg^{-1}(\bT^*_Y(Y/S)))\cap (C\times_XY)$ is contained in the zero-section of $\bT^*(X/S)\times_XY$. We denote by $g^{\circ}C$ the scheme theoretic image $dg(C\times_XY)$ in $\bT^*(Y/S)$. For a closed conical subset $C'$ of $\bT^*(Y/S)$, we say that $h$ is $C'$-{\it transversal relative to} $S$ if $C'\cap \mathrm{im}(dh)$ is contained in the zero-section of $\bT^*(Y/S)$.
Let $\cK$ be a bounded complex of constructible sheaves of $\Lambda$-modules on $X$.
We say that $\cK$ is {\it micro-supported} on a closed conical subset $C$ of $\bT^*(X/S)$ if, for any pair of morphisms $(h,g):Z\leftarrow Y\rightarrow X$ of schemes smooth over $S$ such that $g$ is $C$-transversal relative to $S$ and that $h$ is $g^{\circ}C$-transversal relative to $S$, the morphism $h$ is locally acyclic relative to $g^*\cK$. If there is a smallest element among closed conical subsets of $\bT^*(X/S)$ on which $\cK$ is micro-supported, we call it the {\it singular support of} $\cK$ {\it relative to} $S$ and we denote it by $SS(\cF,X/S)$ (cf. \ref{sub:rss}). 

The definition above was firstly introduced by Beilinson when $S$ is spectrum of a field \cite{Beilinson15}. Unlike the existence of singular support in {\it loc. cit.}, the relative singular support $SS(\cF, X/S)$ does not exist in general. However, using a Radon transform method similar to Beilinson's, we obtain the following theorem:

\begin{theorem}[{Theorem \ref{existrss}}]
The relative singular support $SS(\cK, X/S)$ exists after replacing $S$ by a Zariski open dense subscheme.
\end{theorem}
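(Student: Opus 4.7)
The plan is to adapt Beilinson's Radon transform construction from \cite{Beilinson15} to the relative setting and to exploit generic constructibility to pin down an open dense $V\subseteq S$ over which the desired minimal closed conical subset exists. The natural candidate for $SS(\cK,X/V)$ is a closed conical subset $C\subseteq\bT^*(X/V)$ whose geometric fiber over each $\bar s\to V$ recovers the absolute singular support $SS(\cK|_{X_{\bar s}})\subseteq\bT^*X_{\bar s}=\bT^*(X/S)_{\bar s}$ produced by Beilinson's theorem.

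I would proceed as follows. First, working Zariski locally on $X$ and after shrinking $S$, embed (a compactification of) $X$ into a relative projective space $\bP^N_S$ and consider the universal hyperplane section $H\subseteq X\times_S\dualp^N_S$ with projections $p\colon H\to X$ and $q\colon H\to \dualp^N_S$. The relative Radon transform of $\cK$, namely a suitable shift of $\rR q_*p^*\cK$, is constructible on $\dualp^N_S$; by Deligne's generic local acyclicity together with generic constructibility of direct image complexes, after replacing $S$ by a dense open subscheme $V$ its formation commutes with base change on $V$ and the combinatorial data controlling Beilinson's fiberwise construction (perverse stratifications on $\dualp^N_S$ and the Euler--Poincar\'e data attached to them) remain constant along $V$. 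Over such a $V$, one can carve out $C$ from the transversality locus of the Radon transform exactly as in the absolute case, but now performed uniformly in the family.

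It then remains to verify the two characterizing properties of the relative singular support. For micro-support of $\cK$ on $C$, given a test pair $(h,g)\colon Z\leftarrow Y\to X$ of smooth $V$-schemes with $g$ being $C$-transversal relative to $V$ and $h$ being $g^{\circ}C$-transversal relative to $V$, I would combine Beilinson's absolute statement on each fiber with a fiberwise criterion for relative local acyclicity, further shrinking $V$ if necessary so that Deligne--Laumon type generic semicontinuity upgrades fiberwise local acyclicity to local acyclicity relative to $V$. For minimality, any $C'\subseteq\bT^*(X/V)$ on which $\cK$ is micro-supported relative to $V$ restricts fiberwise to a closed conical subset on which $\cK|_{X_{\bar s}}$ is micro-supported in the absolute sense; by the minimality of $SS(\cK|_{X_{\bar s}})$ proved by Beilinson, each fiber of $C'$ contains the corresponding fiber of $C$, whence $C\subseteq C'$.

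The main obstacle will be the fiberwise reduction of relative local acyclicity: relative local acyclicity is strictly stronger than fiberwise local acyclicity, and bridging the gap requires constructibility and generic semicontinuity statements that only hold after passing to an open dense subscheme of $S$. This gap, together with the need to ensure that the transversality data governing the Radon transform construction (in particular the perverse $t$-structure behavior of the transformed complex on $\dualp^N_S$) is uniform across the family, is precisely what forces the replacement of $S$ by a dense open $V$ in the statement.
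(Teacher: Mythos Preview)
Your proposal has a genuine gap in the step you yourself flag as the main obstacle. You want to show that the candidate $C$ lies in $\mathcal C(\cK,X/V)$ by checking, for each $C$-transversal relative test pair $(g,h):Z\leftarrow Y\to X$, that $g$ is locally acyclic relative to $h^*\cK$. Your plan is to deduce this from fiberwise local acyclicity (Beilinson on each $X_{\bar s}$) plus a Deligne--Laumon type upgrade. But any such upgrade requires shrinking the base in a way that depends on the particular test pair; there is no single dense open $V\subseteq S$ that makes the implication ``fiberwise locally acyclic $\Rightarrow$ relatively locally acyclic'' hold for the entire (proper class of) test pairs simultaneously. So the argument as written does not close.

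The paper avoids this problem by never passing through fiberwise statements in the existence proof. After reducing to $X=\bP$ via Zariski-locality and the closed-immersion lemma, it defines the candidate directly in relative terms: $C$ is the cone on the Legendre transform of the closed set $E=E_{\rpr}(\rpr^{\vee*}R_S(\cF))\subseteq\bH$, i.e.\ the locus where $\rpr:\bH\to\bP$ fails to be universally locally acyclic relative to the pulled-back Radon transform. The verification that $C^+\in\mathcal C(\cF,\bP/S)$ is then a direct geometric argument: for any $C^+$-transversal test pair $(g,h)$ one pulls back to $\bH_U$ and checks local acyclicity of $g\circ\rpr_U$ separately on $\bH_U\setminus E_U$ (using the very definition of $E$) and on a neighborhood of $E_U$ (using that the pair becomes $\bT^*(\bP^\vee/S)$-transversal there, so Proposition~\ref{lem:nonempty} applies). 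Minimality likewise avoids fibers: if $C'\in\mathcal C(\cF,\bP/S)$ then $R_S(\cF)$ is micro-supported on $C'^{\vee+}$, hence $\rpr$ is ULA outside the Legendre transform $E'$ of $C'$, forcing $E\subseteq E'$ and $C^+\subseteq C'^+$. The identification $C_s=SS(\cF|_{\bP_s})$ that you take as the defining property of $C$ is in fact proved \emph{afterwards} (Theorem~\ref{sseqrssfiber}), as a consequence of existence rather than an input to it.
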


Let $s$ be a point of $S$. The similarity of definitions of the singular support and its relative version allows us to compare $SS(\cK,X/S)\times_Ss$ with $SS(\cK|_{X_s})$. We prove the following theorem. 

\begin{theorem}[Theorem \ref{sseqrssfiber} and Theorem \ref{closedA}]\label{introclosedA}
After replacing $S$ by a Zariski open dense subset, the relative singular support $SS(\cK, X/S)$ is flat over $S_{\mathrm{red}}$. Moreover, for any point $s$ of $S$, we have 
\begin{equation}
(SS(\cK,X/S)\times_Ss)_{\mathrm{red}}=SS(\cK|_{X_s}).
\end{equation}
\end{theorem}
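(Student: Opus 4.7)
The plan is to address the two assertions in tandem, writing $C:=SS(\cK,X/S)$; by the previous theorem $C$ exists as a closed reduced conical subset of $\bT^*(X/S)$ after shrinking $S$. The containment $SS(\cK|_{X_s})\subseteq (C\times_S s)_{\mathrm{red}}$ will be established first as a formal consequence of the definitions. Given an absolute transversal pair $(h,g)\colon Z\leftarrow Y\to X_s$ for $(C\times_S s)_{\mathrm{red}}$ with $Y,Z$ smooth over the geometric point above $s$, I lift it Zariski-locally on $Y$ to a pair $(\tilde h,\tilde g)$ of smooth $S$-schemes and $S$-morphisms that restrict to $(h,g)$ over the fiber at $s$. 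Both transversality conditions are open on the appropriate cotangent bundles, so after shrinking $\tilde Y$ around its fiber at $s$ the lifted pair becomes $C$-transversal relative to $S$; the relative micro-support of $\cK$ on $C$ then yields local acyclicity of $\tilde h$ relative to $\tilde g^*\cK$, which base-changes to local acyclicity of $h$ relative to $g^*(\cK|_{X_s})$.

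For the flatness assertion I would apply generic flatness to the finite-type morphism $C\to S_{\mathrm{red}}$; after shrinking $S$, the fibers become equidimensional of a common dimension $n$. To identify $n=d$ and simultaneously prove the fiber equality at the generic point, I reduce to the case $S$ integral with generic point $\eta$ and carry out a closure-minimality argument. Let $\bar B\subseteq \bT^*(X/S)$ be the Zariski closure of $B:=SS(\cK|_{X_\eta})$; the key assertion is that after shrinking $S$ further, $\cK$ is micro-supported on $\bar B$ relative to $S$. For a transversal pair $(h,g)$ relative to $S$ for $\bar B$, the restriction to the generic fiber is transversal for $B$ in the absolute sense, so $h_\eta$ is locally acyclic relative to $g_\eta^*(\cK|_{X_\eta})$ by the definition of $B$, and constructibility of vanishing cycles spreads this out to local acyclicity of $h$ relative to $g^*\cK$ over a dense open of $S$. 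Since Beilinson's construction of the relative singular support requires testing only a bounded family of Radon-transform pairs, finitely many shrinkings suffice to handle them all. Minimality of $C$ then forces $C\subseteq \bar B$, and combined with the easy inclusion at $\eta$ and Beilinson's dimension theorem this yields $(C\times_S\eta)_{\mathrm{red}}=B$, equidimensionality of dimension $d$, and the stronger identification $C=\bar B$ after further shrinking.

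The main obstacle is propagating the fiber equality from $\eta$ to every $s\in S$. Flatness and Beilinson's dimension theorem ensure that both $(C\times_S s)_{\mathrm{red}}$ and $SS(\cK|_{X_s})$ are equidimensional of dimension $d$ with a containment between them, so every component of $SS(\cK|_{X_s})$ is a component of $(C\times_S s)_{\mathrm{red}}$; ruling out extra components on the left is not a formal consequence of the equality at $\eta$. To conclude, I repeat the closure-minimality argument locally around $s$: on an open $V\subseteq S$ containing $s$, construct a closed subset $\bar D$ of $\bT^*(X/S)\times_S V$ whose fiber at $s$ is $SS(\cK|_{X_s})$ and that remains compatible with $\bar B$ over the generic point of $V$; then show $\cK|_{X\times_S V}$ is micro-supported on $\bar D$ relative to $V$ by the same spread-out argument, and conclude $C|_V\subseteq \bar D$ by minimality. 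The technical heart is ensuring the spread-out construction of $\bar D$ is uniform in $s$ after the initial shrinking of $S$, so that a single shrinking handles all points; this rests on the finiteness of the Radon-transform test family from Beilinson's construction and on controlling the specialization of vanishing cycles.
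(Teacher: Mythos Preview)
Your easy inclusion $SS(\cK|_{X_s})\subseteq (C\times_S s)_{\mathrm{red}}$ via lifting test pairs is reasonable (and Lemma~4.14(ii) supplies the universal local acyclicity needed to base-change). The flatness via generic flatness is also fine. The real problem lies in the reverse inclusion.

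Your argument for $(C)_\eta\subseteq SS(\cK|_{X_\eta})$ hinges on showing that $\cK$ is micro-supported on $\bar B$ relative to $S$. But ``micro-supported'' is a universal condition: \emph{every} $\bar B$-transversal test pair must be $\cK$-acyclic. Your spread-out step---restrict to $\eta$, use absolute micro-support there, then spread local acyclicity back---requires shrinking $S$ in a way that depends on the particular test pair, and there is no finiteness that lets a single shrinking handle them all. The appeal to ``finiteness of the Radon-transform test family'' conflates two different things: Radon pairs are used to \emph{construct} $C$ via the explicit formula $\rP(C)=E_{\rpr}(\rpr^{\vee*}R_S(\cF))$, but verifying $\bar B\in\cC(\cK,X/S)$ is a statement about all pairs, not just those. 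The same defect reappears, amplified, at an arbitrary $s$: your $\bar D$ is supposed to have fiber $SS(\cK|_{X_s})$ at $s$ while agreeing with $\bar B$ generically, but there is no canonical such object, and even granting one you face the same per-pair shrinking obstruction.

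The paper avoids this circularity by working entirely with the explicit Radon/Legendre description. After reducing to $X=\bP$, it passes through a Veronese embedding $\wt i:\bP\to\wt\bP$ and considers the divisor $D=NC(\wt R_S(\wt i_*\cF),\wt\bP^\vee)$. Proper base change and Lemma~3.11 give $D_s=NC(\wt R_s(\wt i_{s*}(\cF|_{\bP_s})),\wt\bP^\vee_s)$; Beilinson's Theorem~1.7 says $\rP(\wt i_{s\circ}SS(\cF|_{\bP_s}))$ surjects onto $D_s$; and Proposition~5.7 together with Lemma~3.5 arranges that, after one shrinking of $S$, the projection $\wt\rpr^\vee_s$ induces a bijection between the irreducible components of $\rP(\wt i_\circ C)_s$ and those of $D_s$, generically radicially on each. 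Since $\rP(\wt i_{s\circ}SS(\cF|_{\bP_s}))\subseteq\rP(\wt i_\circ C)_s$ and both map onto $D_s$, the smaller set already contains every generic point of the larger, forcing equality. The point is that the comparison happens through the common target $D_s$, which is controlled uniformly in $s$ by a single constructible object on $\wt\bP^\vee$; this is what replaces your attempted uniform spread-out.
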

 Theorem \ref{introclosedA} implies that, in higher relative dimensional cases, we can take $A=SS(\cK,X/S)$ to generalize part (i) of Theorem \ref{introdellau}, if allowing to shrink $S$. However, Example \ref{ssnotconst} implies that we cannot generalize part (i) in general. 
 
 Part (ii) of Theorem \ref{introdellau} is generalized by the following theorem:

\begin{theorem}[{Theorem \ref{maintheocc}}]\label{intromaintheocc}
There exists a dominant and quasi-finite morphism $\pi:S'\rightarrow S$ and a cycle $B$ on $\mathbb T^*((X\times_SS')/S')$ such that,
\begin{itemize}
  \item[(i)]
  $B=\sum_{\alpha\in I}m_\alpha[B_\alpha]$ $(m_\alpha\in\mathbb Q)$, where $B_\alpha$ ($\alpha\in I$) is flat over $S'$;
  \item[(ii)]
  For any algebraic geometric point $\bar t\rightarrow S'$, we have
\begin{equation*}
 B_{\bar t}=CC(\cK|_{X_{\bar t}}).
\end{equation*}
  \end{itemize}
\end{theorem}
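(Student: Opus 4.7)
The plan is to combine the existence and flatness of the relative singular support (Theorem~\ref{introclosedA}) with Saito's Milnor-formula characterization of characteristic cycles, reducing the constancy of the $CC$-multiplicities in families to the relative curve result of Deligne--Laumon (Theorem~\ref{introdellau}).

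First, shrinking $S$, Theorem~\ref{introclosedA} lets us assume that the relative singular support $A:=SS(\cK,X/S)$ exists, is flat over $S_{\red}$, and satisfies $(A\times_S\bar s)_{\red}=SS(\cK|_{X_{\bar s}})$ for every algebraic geometric point $\bar s$ of $S$. Write the irreducible decomposition $A=\bigcup_\alpha C_\alpha$. After further shrinking $S$ and choosing a dominant quasi-finite morphism $\pi:S'\to S$ that splits those components of $A$ which become reducible under geometric base change, we obtain closed subschemes $B_\alpha\subset\bT^*((X\times_SS')/S')$, flat over $S'$, such that each geometric fiber $(B_\alpha)_{\bar t}$ is supported on a single irreducible component $C_{\alpha,\bar t}$ of $SS(\cK|_{X_{\bar t}})$. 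By flatness the cycle $(B_\alpha)_{\bar t}$ equals $n_\alpha[C_{\alpha,\bar t}]$ for an integer $n_\alpha\ge 1$ independent of $\bar t$. Fixing a geometric generic point $\bar\eta$ of $S'$ I define $m_\alpha\in\mathbb Q$ uniquely by
\begin{equation*}
\sum_\alpha m_\alpha[(B_\alpha)_{\bar\eta}]=CC(\cK|_{X_{\bar\eta}}),
\end{equation*}
and set $B:=\sum_\alpha m_\alpha[B_\alpha]$. Item (i) of the theorem is then immediate; only the fiberwise identity $B_{\bar t}=CC(\cK|_{X_{\bar t}})$ of (ii) remains.

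To verify (ii), I invoke Saito's Milnor formula: $CC(\cK|_{X_{\bar t}})$ is characterized as the unique cycle supported on $SS(\cK|_{X_{\bar t}})$ whose local intersection number with the section $df$ equals $-\dimtot_x R\Phi(\cK|_{X_{\bar t}},f)$, for every $SS$-transversal test function $f$ with isolated characteristic point $x$. The strategy is to produce, after further shrinking $S'$ and, if necessary, a further dominant quasi-finite base change, families $\tilde f:X\times_SS'\to Y$ of test functions with $Y\to S'$ a relatively smooth curve, such that for every geometric point $\bar t$ the restriction $\tilde f_{\bar t}$ is $SS$-transversal outside finitely many isolated characteristic points probing each component $C_{\alpha,\bar t}$. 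Transversality and isolation of characteristic points are open conditions in families, so after shrinking $S'$ such $\tilde f$ exist. For each such $\tilde f$, the computation of $\dimtot\,R\Phi(\cK|_{X_{\bar t}},\tilde f_{\bar t})$ at an isolated characteristic point becomes a relative-dimension-one problem along $Y\to S'$; applying Theorem~\ref{introdellau} to $Y\to S'$ and the vanishing cycle complex of $\cK$ along $\tilde f$ then gives generic constancy of this total dimension over $S'$. Shrinking $S'$ once more forces the Milnor formula to hold with the same multiplicities at every $\bar t$, yielding $B_{\bar t}=CC(\cK|_{X_{\bar t}})$.

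The main obstacle is producing sufficiently many relative test functions $\tilde f$ that simultaneously probe every component $B_\alpha$ by an isolated characteristic point, and carrying through the reduction of the fiberwise Milnor formula to the Deligne--Laumon setting on the relative smooth curve $Y\to S'$. A secondary subtlety is the bookkeeping of the multiplicities $n_\alpha$ under the quasi-finite base change $\pi$, which is what makes the coefficients $m_\alpha$ a priori only rational rather than integral, together with the enlargement of $\pi$ needed to split components of $SS(\cK,X/S)$ that are only geometrically reducible.
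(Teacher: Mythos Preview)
Your overall strategy coincides with the paper's: pass to a quasi-finite cover $S'\to S$ on which the irreducible components of the relative singular support become geometrically irreducible and flat, define the coefficients $m_\alpha$ by matching $CC$ at the geometric generic fibre, and then prove fibrewise constancy of the Milnor numbers. The difference, and the one genuine gap, lies in the last step.

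You propose to obtain the constancy of $\dimtot_x R\Phi(\cK|_{X_{\bar t}},\tilde f_{\bar t})$ by ``applying Theorem~\ref{introdellau} to $Y\to S'$ and the vanishing cycle complex of $\cK$ along $\tilde f$''. This does not fit the hypotheses of Theorem~\ref{introdellau}: that result is formulated for $j_!\cG$ with $\cG$ a locally constant sheaf on the complement of a relative Cartier divisor in a relative curve, whereas the vanishing cycles of $\cK$ along $\tilde f$ live on $X'$ (supported on isolated characteristic points), not on $Y$, and the pushforward $R\tilde f_*\cK$ on $Y$ is in general neither of the shape $j_!\cG$ nor even perverse. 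What is actually needed is Saito's semi-continuity for total dimensions of vanishing cycles \cite[Proposition~2.16]{Saito15-2} (Proposition~\ref{prop:sfunc} in the paper), which takes as input exactly the diagram $D\hookrightarrow X\xrightarrow{g}Y\to S$ with $g$ smooth, $Y/S$ a relative curve and $D/S$ quasi-finite, and outputs local constancy of $s\mapsto\sum_{x\in D_{\bar s}}\dimtot R\Phi_{\bar x}(\cF|_{X_{\bar s}},g_{\bar s})$. Saito's proof of that proposition does reduce to Deligne--Laumon, but the reduction (compactifying $g$, comparing $\dimtot R\Phi$ of $\cF$ upstairs with Swan conductors of $Rg_*\cF$ downstairs via a projection formula for nearby cycles) is nontrivial and cannot be replaced by a direct citation of Theorem~\ref{introdellau}.

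The obstacle you flag --- producing, globally over $S'$, enough relative test functions $\tilde f$ with isolated characteristic points meeting every component $B_\alpha$ --- is the main technical work of the paper's proof. It is carried out not by constructing $\tilde f$ ad hoc but by first reducing to $X=\bP$ via closed immersions (Proposition~\ref{ccopenclose}), then using a Veronese embedding $\wt i:\bP\hookrightarrow\wt\bP$ of degree $\ge 3$ and the universal pencil of hyperplane sections parametrised by the Grassmannian $\wt\bG$ of lines in $\wt\bP^\vee$ (the diagram~\eqref{eq:dia12}). In this universal family the locus of non-transversality is exactly $\rP(\wt i_\circ C)$ by the Legendre transform, and Proposition~\ref{geomctod} guarantees that its projection to $\wt\bP^\vee$ is generically radicial on each component; Proposition~\ref{prop:sfunc} applied to~\eqref{eq:sfunc} then gives the constancy of $\varphi_\alpha$. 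The rational denominator in your $m_\alpha=($generic multiplicity$)/n_\alpha$ appears in the paper as the purely inseparable degree $[\xi_\alpha:\zeta_\alpha]$ of that radicial projection, whose constancy in fibres is Lemma~\ref{lem:ab-4}.
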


\subsection{}

To prove Theorem \ref{intromaintheocc}, we may assume that $S$ is integral. We denote by $\eta$ the generic point of $S$ and $\bar \eta$ an algebraic geometric point above $\eta$. The characteristic cycle $CC(\cK|_{X_{\bar\eta}})$ on $\bT^*X_{\bar\eta}$ can be descent to a cycle $C$ on $\bT^*X_{\eta'}$, where $\eta'$ is a finite extension of $\eta$. We denote by $S'$ the integral closure of $S$ in $\eta'$ and put $X'=X\times_SS'$. Let $B$ be a cycle of $\bT^*(X'/S')$ such that $Y_{\eta'}=C$. Using the semi-continuity of the total dimensions of vanishing cycles complexes due to Saito \cite{Saito15-2}, we proved that coefficients of characteristic cycles of $\cK$ along fibers of $f':X'\rightarrow S'$ are locally constant. Replacing $S'$ by a Zariski open dense subscheme, we obtain Theorem \ref{intromaintheocc}. 
In response to the notion of relative singular support, we call $B$ the {\it characteristic cycle} of $\cK|_{X'}$ {\it relative to} $S'$.

\subsection{}
 We show that part (iii) of Theorem \ref{introdellau} cannot be generalized to a higher relative dimension situation by Example \ref{ccnotconst}. In other words, it says that the characteristic cycles is no longer lower semi-continuous in a higher relative dimension case, which is unlike the relative curve situation (part (2) of Theorem \ref{introdellau}). The main reason of its failure is that,  on a higher dimensional scheme, the singular support of an $\ell$-adic sheaf with wild ramifications is not Lagrangian in general.  In fact, we find an $\ell$-adic sheaf $\cF$ on a smooth fibration $f:X\rightarrow S$ such that, for $s\in S$, the family of singular supports $SS(\cF|_{X_s})$ jumps from non-Lagrangian closed subset of $\bT^*X_s$ to Lagrangian ones at isolated closed points of $S$.

\subsection{}
We expect in the future a generalization of part (iv) of Theorem \ref{introdellau} to higher dimension cases and its relation with vanishing cycles on general bases \cite{Orgogozo}. If we have a positive answer to this generalization,  we could obtain that $f:X\rightarrow S$ is locally acyclic with respect to $\cF$ in  \ref{introsetting} if and only if $f:X\rightarrow S$ is universally locally acyclic with respect to $\cF$. Indeed, we only need to show that locally acyclicity implies universally acyclicity by the help of the generalization. To show the universally locally acyclicity, it is sufficient to show that the map $f_n:\mathbb A^n_X\rightarrow \mathbb A^n_S$ induced by $f:X\rightarrow S$ is locally acyclic with respect to $\cF|_{\mathbb A^n_X}$. If part (iv) of Theorem \ref{introdellau} is valid for higher dimensional situations, there exists a finite and surjective map $\pi:S'\rightarrow S$ and a cycle $B'$ of $\bT^*((X\times_SS')/S')$ such that, for any algebraic geometric point $\bar s'$ of $ S'$, we have $CC(\cF|_{\bar s'})=B'_{\bar s'}$. We put $X'=X\times_SS'$ and we denote by $B'_n$ the pull-back of $B'$ on $\bT^*(\bA^n_{X'}/\bA^n_{S'})$. Then, for any algebraic geometric point $\bar t'$ of $\bA^n_{S'}$, we have $CC(\cF|_{\bar t'})=B'_{\bar t'}=(B'_n)_{\bar t'}$. Using the generalization of part (iv) of Theorem \ref{introdellau} again for  $\cF|_{\bA^n_X}$ and $f_n:\mathbb A^n_X\rightarrow \mathbb A^n_S$, we obtain that $f_n:\mathbb A^n_X\rightarrow \mathbb A^n_S$ is locally acyclic with respect to $\cF|_{\bA^n_X}$.

\subsection{}
There have been several works on the generalization of Deligne and Laumon's semi-continuity of Swan conductors. Saito proved a semi-continuity property for the total dimension of the stalk of vanishing cycles complexes \cite{Saito15-2}. It is an essential step to obtain characteristic cycles of $\ell$-adic sheaves. For an $\ell$-adic sheaf on a smooth variety, one can associate a divisor called the {\it total dimension divisor} using Abbes and Saito's {\it non-logarithmic} ramification filtration of Galois group of local fields \cite{Abbes-Saito02, Saito13}. In \cite{HuYang}, the authors proved the lower semi-continuity for total dimension divisors of $\ell$-adic sheaves on a smooth fibration. 

In the theory of $\mathcal D$-modules, a local invariant of an algebraic $\mathcal D$-module on a smooth complex curve called {\it irregularity} is an analogue of Swan conductors. Deligne proved the semi-continuity for irregularities of relative differential systems on relative curves \cite{letterkatz}. After that,  Andr\'e proved the same property for general differential systems \cite{Andre}. We expect a generalization of their works in higher relative dimension situations for characteristic cycles of $\mathcal D$-modules.
\subsection{}
The article is organized as follows. After notation and geometric preliminaries, we give the notion of relative singular support of \'etale sheaves in \S 4. We prove the generic existence of relative singular support and study its fibers in \S 5. Based on results in \S 5, we prove the generic constancy of characteristic cycles in \S 6. Under certain geometric conditions, we also write down a proposition  in \S 6 that compares the characteristic cycle and the relative characteristic cycle,  which is given by Saito.   In \S 7, we provide two examples which shows that part (i) and part (iii) cannot be generalized to higher dimensional situations.

\subsection*{Acknowledgement}
The authors would like to express their gratitude to T. Saito for sharing ideas and for inspiring discussion. The authors would also like to thank A. Abbes and O. Gabber for their stimulating suggestions. The first author is supported by JSPS postdoctoral fellowship and Kakenhi Grant-in-Aid 15F15727 during his stay at the University of Tokyo and the second author is partially supported by Alexander von Humboldt Foundation for his research at Universit\"at Regensburg and Freie Universit\"at Berlin. Some part of the research was accomplished during their visit at IHES. The authors are grateful to these institutions. 

\section{Notation and conventions}

\subsection{}
Let $X$ be a scheme and $\mathcal E$ a locally free sheaf of $\mathcal O_X$-modules of finite rank on $X$. We call the vector bundle of $\mathcal E$ and denote by $\mathbb V(\mathcal E)$ the spectrum of the quasi-coherent $\mathcal O_X$-algebra $\mathrm{Sym}_{\mathcal{O}_X}(\mathcal E)$. Let $E$ be such a vector bundle on $X$. We denote by $\rP(E)$ the projectivization of $E$. Let $C$ be a constructible subset of $E$. We say that $C$ is {\it conical}, if $C$ is invariant under the canonical $\mathbb G_m$-action on $E$ that fixes the zero-section. For a closed conical subset $C$ of $E$, we denote by $\rP(C)$ the projectivization of $C$, which is a closed subset of $\rP(E)$. We denote by $B(C)$ the scheme theoretic image of $C$ in $X$ by the canonical projection $\pi:E\rightarrow X$ and call it the {\it base} of $C$. Observe that the canonical map $C\rightarrow B(C)$ is surjective. We say a conical subset $C$ is {\it strict} if each irreducible component of $C$ is not contained in the zero-section of $E$. We have a bijection
\begin{equation*}
\{\textrm{strict closed conical subsets of } E\}\rightarrow \{\textrm{closed subsets of }\rP(E)\}, \,C\mapsto \rP(C).
\end{equation*}
We always take reduced induced subscheme structures on a closed conical subset of a bundle and its projectivization.

\subsection{}\label{notationfiber}
Let $f\colon X\rightarrow S$ be a morphism of schemes, $s$ a point of $S$ and $\bar s\rightarrow S$ a geometric point above $s$. We denote by $X_s$ (resp. $X_{\bar s}$) the fiber $X\times_Ss$  (resp. $X\times_S\bar s$). Let $\{Z_i\}_{i\in I}$ be a finite set of closed subschemes of $X$ and $Z=\sum_i m_i[Z_i]$ a cycle supported on $Z$. We denote by $Z_s$ (resp. $Z_{\bar s}$) the cycle $\sum_i m_i[(Z_i)_s]$ (resp. $\sum_i m_i[(Z_i)_{\bar s}]$) on $X_s$ (resp. $X_{\bar s}$).

\subsection{}\label{relcotbundle}
Let $f\colon X\rightarrow S$ be a smooth morphism of Noetherian schemes. We denote by $\mathbb T^\ast(X/S)$ the vector bundle $\mathbb V((\Omega^{1}_{X/S})^{\vee})$ on $X$ and call it {\it the relative cotangent bundle on} $X$ {\it with respect to} $S$. We denote by $\mathbb T^*_X(X/S)$ the zero-section of
$\mathbb T^\ast(X/S)$.
Let $u$ be a point of $X$ and $\bar u\rightarrow X$ a geometric point of $X$ above $u$. We put
\begin{equation*}
\mathbb T^{\ast}_u(X/S)=\mathbb T^{\ast}(X/S)\times_Xu \ \ \ {\rm and}\ \ \ \mathbb T^{\ast}_{\bar u}(X/S)=\mathbb T^{\ast}(X/S)\times_X\bar u.
\end{equation*}

Let $g:Y\rightarrow S$ be a smooth morphism and $f:X\rightarrow S$ an $S$-morphism. Then, we have a canonical morphism
 \begin{equation}\label{diffcot}
 dh:\bT^*(X/S)\times_XY\rightarrow \bT^*(Y/S)
 \end{equation}
induced by the canonical map $h^*\Omega^1_{X/S}\rightarrow \Omega^1_{Y/S}$.

\subsection{}\label{sub:t}
Let $f\colon X\rightarrow S$ be a smooth morphism of Noetherian schemes and $C$ a closed conical subset of $\mathbb T^\ast(X/S)$.
Let $Y$ be an $S$-scheme smooth over $S$ and $h\colon Y\rightarrow X$ an $S$-morphism.  We say that  $h\colon Y\rightarrow X$ is {\it $C$-transversal relative to} $S$ {\it at a geometric point} $\bar y\rightarrow Y$ if for every non-zero vector $\mu\in C_{h(\bar y)}=C\times_X\bar y$, the image $dh_{\bar y}(\mu)\in \mathbb T_{\bar y}^\ast(Y/S)$ is not zero, where $dh_{\bar y}\colon \mathbb T_{\bar y}^\ast(X/S)\rightarrow \mathbb T_{h(\bar y)}^\ast(Y/S)$ is the canonical map. We say that $h\colon Y\rightarrow X$ is $C$-{\it transversal relative to} $S$ if it is $C$-transversal relative to $S$ at every geometric point of $Y$. If $h:Y\rightarrow X$ is $C$-transversal relative to $S$, the topological image of $C\times_XY$ in $\mathbb T^\ast(Y/S)$ by the canonical map $dh:\mathbb T^\ast(X/S)\times_X Y\rightarrow \mathbb T^\ast(Y/S)$ is closed, by the same argument of \cite[Lemma 1.1]{Beilinson15}. We put $h^{\circ}C=dh(C\times_XY)$.

Let $Z$ be an $S$-scheme smooth over $S$ and $g\colon X\rightarrow Z$ an $S$-morphism. We say that $g\colon X\rightarrow Z$ is {\it $C$-transversal relative to} $S$ {\it at a geometric point} $\bar x\rightarrow X$ if for every non-zero vector $\nu\in T_{g(\bar x)}^\ast(Z/S)$, we have $dg_{\bar x}(\nu)\notin C_{\bar x}$, where $dg_{\bar x}\colon\mathbb T^*_{g(\bar x)}(Z/S)\rightarrow \mathbb T^*_{\bar x}(X/S)$ is the canonical map. We say that $g\colon X\rightarrow Z$ is $C$-{\it transversal relative to} $S$ if it is $C$-transversal relative to $S$ at all geometric point of $X$.
If the base $B(C)$ of $C$ is proper over $Z$, we put $g_\circ C:=\rpr_1(dg^{-1}(C))$, where $\rpr_1\colon \mathbb T^\ast (Z/S)\times_Z X\rightarrow \mathbb T^\ast (Z/S)$ denotes the first projection and $dg: \mathbb T^*(Z/S)\times_ZX\rightarrow \mathbb T^*(X/S)$ is the canonical map. It is a closed conical subset of $\mathbb T^*(X/S)$.

When $S$ is spectrum of a field, the definitions above are identical to those in \cite{Beilinson15}. In this case, we follow the terminologies of {\it loc. cit.} and omit "relative to $S$".
 We observe that that $g:X\rightarrow Z$ (resp. $h:Y\rightarrow X$) is $C$-transversal relative to $S$ if and only if for every point $s\in S$, the fiber $g_s:X_s\rightarrow Z_s$ (resp. $h_s:Y_s\rightarrow X_s$) is $C_s$-transversal.
Similarly to \cite[Lemma 1.1]{Beilinson15}, if the map $h:Y\rightarrow X$ (resp. $g:X\rightarrow Z$) is $C$-transversal relative to $S$ at a geometric point $\bar u\rightarrow Y$ (resp. $\bar x\rightarrow X$), then, there exists a Zariski open neighborhood $Y_0$ of $\bar u$ (resp. $X_0$ of $\bar x$) such that $h|_{Y_0}:Y_0\rightarrow X$ (resp. $g|_{X_0}:X_0\rightarrow Z$) is $C$-transversal relative to $S$.

\subsection{}
Let $X$ be an irreducible excellent Noetherian scheme. We say that $X$ is {\it generically dimensional} if, for any open dense subscheme $U$ of $X$, we have $\dim X=\dim U$. For example, an irreducible algebraic variety over a field is generically dimensional. But, the spectrum of an excellent integral local ring of dimension $\geq 1$ is not. For any irreducible excellent Noetherian scheme $Y$,  there exists an open dense subscheme $V$ of $Y$ such that $V$ is generically dimensional. The condition of generic dimension is preserved by taking open dense subsechemes and irreducible closed subschemes.

\subsection{}
We fix a prime number $\ell$ and a finite field $\Lambda$ of characteristic $\ell$. Let $X$ be a Noetherian scheme over $\mathbb Z[1/\ell]$. We denote by $D(X,\Lambda)$ the derived category of complexes of \'etale sheaves of $\Lambda$-modules on $X$ and by $D^b_c(X,\Lambda)$ its full subcategory consisting of objects bounded with constructible cohomologies.


\subsection{}
Let $f:X\rightarrow S$ be a morphism of Noetherian schemes over $\mathbb Z[1/\ell]$ and $\cF$ an object of $D^b_c(X,\Lambda)$. We denote by $E_f(\cF)$ the smallest closed subset of $X$ such that $f: X-E_f(\cF)\rightarrow S$ is universally locally acyclic with respect to $\cF$. Let $S'$ be a Noetherian $S$-scheme. We set $X'=X\times_SS'$, set $f':X'\rightarrow S'$ the base change of $f:X\rightarrow S$ and set $\cF'=\cF|_{X'}$. We have $E_{f'}(\cF')\subseteq E_f(\cF)\times_SS'$.

\subsection{}\label{ncl}
Let $X$ be an irreducible Noetherian scheme over $\mathbb Z[1/\ell]$, $\xi$ the generic point of $X$ and $\cF$ an object on $D^b_c(X,\Lambda)$. We denote by
$C_0(\cF,X)$ the constructible subset
\begin{equation*}
C_0(\cF,X)=\{s\in X\,;\,\mathrm{cosp}:\cF_{\bar s}\xrightarrow{\sim} \cF_{\bar \xi}\}
\end{equation*}
of $X$, where $\bar s$ (resp. $\bar \xi$) denotes a geometric point above $s$ (resp. $\xi$) and $\mathrm{cosp}$ is the canonical cospecialization map. We denote by $NC_0(\cF,X)$ the complement of $C_0(\cF,X)$ in $X$ and by $NC(\cF,X)$ the closure of $NC_0(\cF,X)$ in $X$ and we call it the {\it non-constant locus of} $\cF$ on $X$. We always take the reduced induced subscheme structure on the non-constant locus.

\subsection{}\label{defdimtot}
Let $K$ be a complete discrete valuation field, $\mathcal O_K$ its integer ring, $F$ the residue field of $\mathcal O_K$, $\overline K$  a separable closure of $K$ and $G_K$ the Galois group of $\overline K$ over $K$. We assume that $F$ is perfect of characteristic $p>0$. Let $M$ be a finite generated $\Lambda$-module with continuous $G_K$-action. The {\it total dimension} of $M$ is defined by 
\begin{equation*}
\mathrm{dimtot}_KM=\mathrm{Sw}_KM+\dim_{\Lambda}M,
\end{equation*}
where $\mathrm{Sw}_KM$ denotes the Swan conductor of $M$ \cite[19.3]{Serre97}.

\section{Radon Transforms and Legendre transforms}

\subsection{}\label{projspace} 
We fix an integer $n\geq 1$. We denote by $E_{\mathbb Z}=\bA^{n+1}_{\mathbb Z}$ the trivial vector bundle over $\mathrm{Spec}({\mathbb Z})$ of rank $n+1$, by $E_{\mathbb Z}^{\vee}$ the dual bundle of $E_{\mathbb Z}$ and by $\bP_{\mathbb Z}=\rP(E_{\mathbb Z})$ (resp. $\bP_{\mathbb Z}^{\vee}=\rP(E_{\mathbb Z}^{\vee})$) the projectivization of $E_{\mathbb Z}$ (resp. $E_{\mathbb Z}^{\vee}$). The {\it universal hyperplane} $\bH_{\mathbb Z}$ of $\bP_{\mathbb Z}\times_{\mathbb Z}\bP^{\vee}_{\mathbb Z}$ is the hypersurface defined by the principal Cartier divisor associated to the section
\begin{equation*}
\mathrm{id}\in \mathrm{End}_{\mathbb Z}(\Gamma(\bP_{\mathbb Z}, \cO_{\bP_\bZ}(1)))=\Gamma(\bP_{\mathbb Z}\times_{\mathbb Z}\bP^{\vee}_{\mathbb Z}, \rpr_1^*(\cO_{\bP_\bZ}(1))\otimes\rpr_2^*(\cO_{\bP^{\vee}_\bZ}(1))).
\end{equation*}
For any point $s\in \mathrm{Spec}({\mathbb Z})$, we have 
$(\bH_{\mathbb Z})_{s}=\{(x,x')\in\bP_{s}\times_{s}\bP^{\vee}_{s}\,|\, x\in H_{x'}\}$, where $H_{x'}$ denotes the hyperplane of $\bP_s$ associated to $x'\in\bP^{\vee}_s$. Let $d$ be an integer $\geq 1$ and $N=\binom{n+d}{d}$. We denote by $\wt E_{\mathbb Z}=\bA^N_{\mathbb Z}$ the trivial vector bundle over ${\mathbb Z}$ of rank $N$, by $\wt E_{\mathbb Z}^{\vee}$ the dual bundle of $\wt E_{\mathbb Z}$ and by $\wt\bP_{\mathbb Z}=\rP(\wt E_{\mathbb Z})$ (resp. $\wt\bP^{\vee}_{\mathbb Z}=\rP( \wt E^{\vee}_{\mathbb Z})$) the projectivization of $\wt E_{\mathbb Z}$ (resp. $\wt E_{\mathbb Z}^{\vee})$). Let $\wt i_{\mathbb Z}:\bP_{\mathbb Z}\rightarrow \wt\bP_{\mathbb Z}$ be a Veronese embedding of degree $d$. We denote by $\wt\bH_{\mathbb Z}$ the universal hyperplane of $\wt\bP_{\mathbb Z}\times_{\mathbb Z}\wt\bP_{\mathbb Z}^{\vee}$. 

Let $S$ be a scheme. For any $A_{\mathbb Z}$  ($A_{\mathbb Z}=E_{\mathbb Z}, E^{\vee}_{\mathbb Z}, \wt E_{\mathbb Z},\cdots$) above, we simply put $A=A_{\mathbb Z}\times_{\mathbb Z}S$. We denote by $\wt i:\bP\rightarrow \wt \bP$ the base change of $\wt i_{\mathbb Z}:\bP_{\mathbb Z}\rightarrow \wt\bP_{\mathbb Z}$ and by $\rpr:\bH\rightarrow \bP$ (resp. $\rpr^\vee:\bH\rightarrow\bP^{\vee}$, $\wt\rpr:\wt\bH\rightarrow\wt\bP$ and $\wt\rpr^{\vee}:\wt\bH\rightarrow\wt\bP^{\vee}$) the canonical projection.

\subsection{}\label{legtran}
We have a canonical injection of locally free sheaves of $\cO_{\bP_{\bZ}}$-modules 
\begin{equation*}
\Omega^1_{\bP_{\bZ}/\bZ}(1)\rightarrow \Gamma(\bP_\bZ,\cO_{\bP_\bZ}(1))\otimes_\bZ\cO_{\bP_\bZ}
\end{equation*}
with the cokernel $\cO_{\bP_\bZ}(1)$. It induced an injection of vector bundles
\begin{equation*}
\bV((\Omega^{1}_{\bP_{\bZ}/\bZ})^{\vee}(-1))\rightarrow E^{\vee}_{\bZ}\times_\bZ\bP_{\bZ},
\end{equation*}
hence an injection of their projectivizations 
\begin{equation*}
\rP(\bT^*(\bP_\bZ/\bZ))\rightarrow \bP^{\vee}_\bZ\times_\bZ\bP_\bZ,
\end{equation*}
which gives rise to an isomorphism
\begin{equation*}
\theta_{\bZ}: \rP(\bT^*(\bP_\bZ/\bZ))\xrightarrow{\sim}\bH_\bZ.
\end{equation*}
Exchange $\bP_\bZ$ and $\bP_\bZ^{\vee}$, we obtain an isomorphism
\begin{equation*}
\theta^{\vee}_{\bZ}: \rP(\bT^*(\bP^{\vee}_\bZ/\bZ))\xrightarrow{\sim}\bH_\bZ.
\end{equation*}
The maps $\theta_{\bZ}$ and $\theta_{\bZ}^{\vee}$ give rise to isomorphisms
\begin{equation*}
\theta: \rP(\bT^*(\bP/S))\xrightarrow{\sim}\bH\ \ \ \textrm{and}\ \ \  \theta^{\vee}:\mathrm P(\mathbb T^*(\mathbb P^{\vee}/S))\xrightarrow{\sim}\mathbb H.
\end{equation*}
The isomorphisms $\theta_\bZ$, $\theta^{\vee}_\bZ$, $\theta$ and $\theta^{\vee}$ are called {\it Legendre transforms}.

Let $S$ be a Noetherian scheme and $s$ a point of $S$. We have
\begin{equation*}
\theta_s:\rP(\bT^*\bP_s)\xrightarrow{\sim}\bH_s,\ \ \  (x,\bar\lambda_{x,x'})\mapsto(x,x'),
\end{equation*}
where $(x,x')$ denotes a closed point of $\bH_s$ and $\lambda_{x,x'}$ denotes the $1$-dimensional vector space $\bT^*_x\bP_s$ which is orthogonal to the hyperplane $H_{x'}\subseteq \bP_s$ associated to $x'\in \bP^{\vee}_s$.  For any $(x,x')\in\bH_s$, we have canonical injections
\begin{equation*}
d\rpr_x:\bT^*_x\bP_s\rightarrow \bT^*_{(x,x')}\bH_s, \ \ \ \textrm{and}\ \ \ d\rpr^{\vee}_{x'}:\bT^*_{x'}\bP^{\vee}_s\rightarrow \bT^*_{(x,x')}\bH_s.
\end{equation*}
The intersection of $\bT^*_x\bP_s$ and $\bT^*_{x'}\bP^{\vee}_s$ in $\bT^*_{(x,x')}\bH_s$ is a $1$-dimensional vector space $L_{x,x'}$ and we have
\begin{equation}\label{dpreqn}
d\rpr_x(\lambda_{x,x'})=L_{x,x'}=d\rpr^{\vee}_{x'}(\lambda_{x',x}).
\end{equation}
Let $C$ be a closed conical subset of $\bT^*(\bP/S)$. We denote by $C^{+}$ the union of $C$ and the zero-section of $\mathbb T^*(\mathbb P/S)$. The projectivization $\rP(C)$ of $C$ can be considered as a closed subset of $\mathrm P(\mathbb T^*(\mathbb P^{\vee}/S))$ by Legnedre transforms. We denote by $C^{\vee}$ the strict closed conical subset of $\mathbb T^*(\mathbb P^{\vee}/S)$ associated to $\rP(C)$. We take the same notation after exchanging $\bP$ and $\bP^{\vee}$. Obviously, we have $C^+=C^{\vee\vee+}$ and, if $C$ is strict, we have $C=C^{\vee\vee}$. By  \eqref{dpreqn}, we have $\rpr^{\vee}_{\circ}\rpr^{\circ}(C^+)=C^{\vee+}$.

\begin{lemma}\label{gendiml}
 Let $X$ and $Y$ be irreducible excellent Noetherian schemes and
 $f:X\rightarrow Y$ a dominant morphism of finite type. Then, there exists a Zariski open dense subscheme $V$ of $Y$ such that $X_V=X\times_YV$ is generically dimensional.
 \end{lemma}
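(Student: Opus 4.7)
The plan is to reduce, by successively shrinking $Y$, to a situation in which $f$ is a flat surjection with equidimensional fibers over a Jacobson base, after which the assertion follows from the flat dimension formula and universal catenarity of excellent schemes.

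First, by the remark preceding the lemma, I would replace $Y$ by an open dense subscheme on which $Y$ is generically dimensional; this property is inherited by any further shrinking. By generic flatness, I shrink $Y$ again so that $f$ is flat; since flat finite-type morphisms are open and $f$ is dominant, the image of $f$ is an open dense subset of $Y$, and replacing $Y$ by this image makes $f:X\rightarrow Y$ flat, surjective and of finite type. For such a morphism the fiber dimension $y\mapsto\dim X_y$ is locally constant, so has a common value $d$ on the connected scheme $Y$, and each fiber is equidimensional of dimension $d$. Finally, I shrink $Y$ once more to a Jacobson open dense subscheme $V$; such a $V$ exists for any irreducible excellent Noetherian scheme, since the punctured spectrum of an excellent Noetherian local ring is Jacobson and one may patch such subschemes around the non-Jacobson closed points of $Y$.

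Set $m=\dim V$. The scheme $X_V$ is of finite type over the Jacobson scheme $V$, hence itself Jacobson, and is excellent (so universally catenary) and irreducible. For every closed point $x\in X_V$ the image $f(x)$ is a closed point of $V$ by the Jacobson property, so $\dim\mathcal{O}_{V,f(x)}=m$ by the generic dimensionality of $V$. The flat dimension formula then yields
\[
\dim\mathcal{O}_{X_V,x}\ =\ \dim\mathcal{O}_{V,f(x)}+\dim_x(X_V)_{f(x)}\ =\ m+d,
\]
using that the fiber $(X_V)_{f(x)}$ is equidimensional of dimension $d$. Thus every closed point of $X_V$ has local dimension $m+d$; by universal catenarity and irreducibility of $X_V$, every maximal chain of irreducible closed subsets has length $m+d$, and $X_V$ is therefore generically dimensional.

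The main obstacle is the Jacobson reduction: the example $f:\bA^1_R\rightarrow \spec R$ with $R$ a discrete valuation ring shows that without it, closed points of $X_V$ can lie above the generic point of $Y$ (witness the maximal ideal $(ts-1)$ of $R[s]$), making the flat dimension formula yield values strictly smaller than $m+d$. Once the Jacobson reduction is in place, the remaining arguments are routine consequences of the flat dimension formula and the excellence hypothesis.
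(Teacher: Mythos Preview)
Your overall strategy---shrink $Y$ to be generically dimensional, make $f$ flat with equidimensional fibers, then apply the flat dimension formula---matches the paper's, but your Jacobson reduction is both a gap and an unnecessary detour. Your justification that every irreducible excellent Noetherian $Y$ admits a Jacobson open dense subscheme is not a proof: although punctured spectra of Noetherian local domains are indeed Jacobson, the ``patching'' you invoke is undefined. Such a $Y$ may carry infinitely many non-closed locally-closed points (in $\spec\,\mathbb Z_p[t]$, every prime $(t-a)$ with $a\in\mathbb Z_p$ is one), and nothing in your argument produces a single proper closed subset whose complement is Jacobson; in that example $V=D(p)$ happens to work, but your local patching does not locate it, and you have not shown the claim holds in general.

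The paper shows the reduction is unnecessary. After shrinking $Y$ so that $Y$ is generically dimensional, $f_{\mathrm{red}}$ is flat, and (via \cite[III, 9.5.6]{EGA4}) every fiber is equidimensional of dimension $d=\dim X_\eta$, one computes $\dim U$ for arbitrary open dense $U\subseteq X$ directly. The image $f(U)$ is open and dense in $Y$, so by generic dimensionality of $Y$ there is $y\in f(U)$ with $\dim\mathcal O_{Y,y}=\dim Y$; since $X_y$ is of finite type over the field $k(y)$ it is automatically Jacobson, so the nonempty open $U_y\subseteq X_y$ contains a point $x$ closed in $X_y$, and the flat dimension formula gives $\dim\mathcal O_{U,x}=\dim\mathcal O_{Y,y}+d=\dim Y+d$. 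Hence $\dim U=\dim Y+d=\dim X$. The Jacobson property is needed only on the fibers, where it comes for free; no global Jacobson condition on $Y$ or $X$ is required.
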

\begin{proof}
After replacing $Y$ by a Zariski open dense subscheme, we may assume that $Y$ is generically dimensional. By \cite[II, 6.9.1]{EGA4}, we may further assume that $f_{\mathrm{red}}:X_{\mathrm{red}}\rightarrow Y_{\mathrm{red}}$ is flat. Let $\eta$ be the generic point of $Y$. By \cite[III, 9.5.6]{EGA4}, we may assume that, for any $y\in Y$, each irreducible component $X_v$ has dimension $\dim_{k(\eta)} X_{\eta}$. We have $\dim X=\dim Y+\dim_{k(\eta)}X_{\eta}$.
For any open dense subscheme $U$ of $X$, the map $f:U_{\mathrm{red}}\rightarrow Y_{\mathrm{red}}$ is flat. Moreover, for any $y\in Y$, the fiber $U_y$ is empty or each irreducible component of $U_y$ has dimension $\dim_{k(\eta)} X_{\eta}$. Hence, $\dim U=\dim Y+\dim_{k(\eta)}X_{\eta}=\dim X$.
\end{proof}



\subsection{}(\cite[4.2]{Beilinson15}).\label{wlint}
Let $k$ be a field and we assume that all $k$-schemes in this section are of finite type over $\mathrm{Spec}(k)$.
 Let $P$ be an irreducible $k$-scheme and $\pi: H\rightarrow P$ a morphism of $k$-schemes. We denote by $H_P^{(2)}$ the complement of the diagonal in $H\times_PH$. We say that two irreducible closed subschemes $Z_1$ and $ Z_2$ of $H$ {\it intersect well relatively to} $\pi: H\rightarrow P$ if $(Z_1\times_P Z_2)\bigcap H_P^{(2)}$ is equidimensional and
  \begin{equation*}
  \dim_k\left((Z_1\times_P Z_2)\cap H_P^{(2)} \right)= \dim_k Z_1+\dim_k Z_2-\dim_k P.
  \end{equation*}
 Let $Z$ be an irreducible $k$-scheme and $g:Z\rightarrow P$ a generically surjective morphism. The map $g:Z\rightarrow P$ is called {\it small} if $\dim_k((Z\times_P Z)- \delta(Z))<\dim_k Z$, where $\delta:Z\rightarrow Z\times_PZ$ denotes the diagonal map. A small map is generically radicial.


We assume that $\pi:H\rightarrow P$ is proper.  If irreducible closed subschemes $Z_1$ and $Z_2$ of $H$ intersect well relatively to $\pi:H\rightarrow P$, if $Z_1$ is generically finite over $\pi(Z_1)$ and if $\dim_k Z_2<\dim_k P$, then $Z_1\subseteq Z_2$ is equivalent to $\pi(Z_1)\subseteq \pi(Z_2)$ (\cite[Lemma 4.2]{Beilinson15}).
If an irreducible closed subscheme $Z$ of $H$ intersects well relatively to $\pi:H\rightarrow P$ with itself and $\dim_k Z<\dim_k P$, then the map $\pi|_Z:Z\rightarrow \pi(Z)$ is small and generically radicial ({\it loc. cit.}).

\subsection{}\label{genericsmall}
Let $T$ be an irreducible excellent Noetherian scheme and $P$ an irreducible $T$-scheme dominant and of finite type over $T$.  We assume that $P$ is generically dimensional. Let $\pi: H\rightarrow P$ be a morphism of finite type.  We denote by $H_P^{(2)}$ the complement of the diagonal in $H\times_PH$. Let $Z_1$ and $Z_2$ be irreducible closed subschemes of $H$ which are dominant over $T$. After replacing $T$ by a Zariski open dense subscheme, we may assume that all schemes above are generically dimensional (Lemma \ref{gendiml}). We say that $Z_1$ and $ Z_2$ {\it generically intersect well relatively to} $\pi: H\rightarrow P$ if, after replacing $T$  by a Zariski open dense subscheme, each irreducible component of $(Z_1\times_P Z_2)\bigcap H_P^{(2)}$ is generically dimensional, has same dimension, and
  \begin{equation*}
  \dim\left((Z_1\times_P Z_2)\cap H_P^{(2)} \right)= \dim Z_1+\dim Z_2-\dim P.
  \end{equation*}
Let $Z$ be an irreducible excellent Noetherian scheme and $g:Z\rightarrow P$ a generically surjective morphism. We may assume that $Z$ is generically dimensional after replacing $T$ by a Zariski open dense subset. The map $g:Z\rightarrow P$ is called {\it generically small} if, after replacing $T$ by a Zariski open dense subscheme, we have $\dim((Z\times_P Z)- \delta(Z))<\dim Z$, where $\delta:Z\rightarrow Z\times_PZ$ denotes the diagonal map. A generically small map is generically radicial.

We assume that $\pi:H\rightarrow P$ is proper. Let $Z_1$ and $Z_2$ be irreducible closed subschemes of $H$ which are dominant over $T$. We may assume that $Z_1$ and $Z_2$ are generically dimensional after replacing $T$ by a Zariski open dense subset. If $Z_1$ and $Z_2$ generically intersect well relatively to $\pi:H\rightarrow P$, if $Z_1$ is generically finite over $\pi(Z_1)$ and if $\dim Z_2<\dim P$, then $Z_1\subseteq Z_2$ is equivalent to $\pi(Z_1)\subseteq \pi(Z_2)$. Let $Z$ be an irreducible closed subscheme of $H$ which is generically dimensional and flat over $T$.
If $Z$ generically intersects well relatively to $\pi:H\rightarrow P$ with itself and $\dim Z<\dim P$, then the map $\pi|_Z:Z\rightarrow \pi(Z)$ is generically small and generically radicial. 
Let $\eta$ be the generic point of $T$. The two assertions above are deduced by using \cite[Lemma 4.2]{Beilinson15} to the generic fiber $\pi_{\eta}:H_\eta\rightarrow P_\eta$,

\begin{lemma}\label{fibradicial}
  Let $T$ be an irreducible Noetherian schemes, $X$ and $Y$ irreducible $T$-schemes flat and of finite type over $T$ and $g:X\rightarrow Y$ a surjective and generically radicial $T$-morphism. For any $s\in T$, we denote by $\{(X_s)_\alpha\}_{\alpha\in I(s)}$ (resp. $\{(Y_s)_\beta\}_{\beta\in J(s)}$) the irreducible components of $X_s$ (resp. $Y_s$). Then, there exists an open dense subset $W$ of $T$ such that, for any $s\in W$, the map $g_s:X_s\rightarrow Y_s$ induces an identity between $I(s)$ and $J(s)$ and, for any $\alpha\in I(s)$, the map $g_s:(X_s)_\alpha\rightarrow(Y_s)_\alpha$ is surjective and generically radicial.
\end{lemma}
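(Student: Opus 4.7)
The plan is to upgrade the generically radicial hypothesis to a generic isomorphism for $g$ and then spread this property to fibers by base change. The main ingredient is the fact that a flat, radicial, and locally finitely presented morphism is an open immersion (EGA IV, 17.9.1), so combined with surjectivity it becomes an isomorphism.

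First, by generic flatness (EGA IV, 6.9.1) there is a dense open $U \subseteq Y$ over which $g|_{g^{-1}(U)}$ is flat; intersecting with the dense open $V' \subseteq Y$ on which $g$ is radicial gives $V := U \cap V'$, and the map $g|_{g^{-1}(V)} \colon g^{-1}(V) \to V$ is then flat, radicial, of finite presentation, and surjective (by the global surjectivity of $g$), hence an isomorphism. I would then shrink $T$ so that $V_s \subseteq Y_s$ and $g_s^{-1}(V_s) \subseteq X_s$ are dense for every $s$ in the resulting dense open $W \subseteq T$. The image of $V \to T$ is open in $T$ (as $V \to T$ is flat of finite presentation) and contains the generic point $\eta$, hence is dense. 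Furthermore $Z := Y \setminus V$ is a proper closed subscheme of the irreducible scheme $Y$ with $\dim Z < \dim Y$, so a generic-flatness-plus-equidimensionality argument for fibers of $Y \to T$ gives $\dim Z_s < \dim Y_s$ after shrinking, whence $Z_s$ contains no irreducible component of $Y_s$ and $V_s$ is dense. The analogous argument applied to $X \setminus g^{-1}(V) \subseteq X$ handles the $X$-side.

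For $s \in W$, the base-change isomorphism $g_s|_{g_s^{-1}(V_s)} \colon g_s^{-1}(V_s) \xrightarrow{\sim} V_s$ sets up a bijection between the irreducible components of $g_s^{-1}(V_s)$ and of $V_s$; taking closures in $X_s$ and $Y_s$ respectively yields the required bijection $\phi_s \colon I(s) \to J(s)$, and $g_s$ sends the generic point of $(X_s)_\alpha$ to the generic point of $(Y_s)_{\phi_s(\alpha)}$. The restriction $g_s|_{(X_s)_\alpha} \colon (X_s)_\alpha \to (Y_s)_{\phi_s(\alpha)}$ is then an isomorphism on a dense open and so generically radicial. The main obstacle will be checking surjectivity of this component-wise restriction: density of the image is automatic, but points of $(Y_s)_{\phi_s(\alpha)}$ lying on intersections with other components of $Y_s$ must be handled separately. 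I would address this by combining the global surjectivity of $g_s$ (supplying some preimage $x \in (X_s)_{\alpha'}$) with the injectivity of $\phi_s$ (which forces $\alpha' = \alpha$ for $y$ not on such intersections), together with a dimension-counting argument, after a possibly further shrinking of $T$, on the lower-dimensional intersection locus to close the remaining case.
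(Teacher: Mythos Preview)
Your approach is essentially the paper's, with two minor variations. First, the paper does not upgrade $g'$ to an isomorphism via EGA~IV~17.9.1: it simply takes an open dense $V \subseteq Y$ over which $g' \colon U = g^{-1}(V) \to V$ is surjective and radicial (directly from the hypothesis), which is enough since both properties are stable under base change and together give a bijection on points. Second, for the density of $U_s \subseteq X_s$ and $V_s \subseteq Y_s$ the paper invokes \cite[III,~9.6.1]{EGA4} directly to produce a dense open $W \subseteq T$ with $\overline{U_s} = X_s$ and $\overline{V_s} = Y_s$ for all $s \in W$; this is cleaner than your dimension argument and sidesteps having to arrange equidimensionality of fibers separately. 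From there the bijection of generic points and the generic radicality on each component follow exactly as you outline.

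Regarding the componentwise surjectivity you flag as the main obstacle: the paper's proof does not address it either, stopping at generic radicality of $g_s \colon (X_s)_\alpha \to (Y_s)_\alpha$. In the actual uses of this lemma (Theorem~\ref{sseqrssfiber} and Step~2 of Theorem~\ref{maintheocc}) only dominance and generic radicality are invoked. So your concern is legitimate, but you may treat ``surjective'' in the statement as a mild overstatement that is inessential downstream; the argument you sketch for it is not needed.
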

\begin{proof}

Since $g:X\rightarrow Y$ is of finite type and generically radicial, we have a Cartesian diagram
  \begin{equation*}
    \xymatrix{\relax
    U\ar[d]_{j'}\ar[r]^{g'}\ar@{}|-{\Box}[rd]&V\ar[d]^j\\
    X\ar[r]_g&Y}
  \end{equation*}
  where $V$ is an open dense subset of $Y$, $j:V\rightarrow Y$ is the canonical injection and $g':U\rightarrow V$ is surjective and radicial. Hence, $U$ is an open dense subset of $X$.  By \cite[III, 9.6.1]{EGA4}, there exists an open dense subset $W\subseteq T$ such that, for any $s\in W$, we have $\overline U_s=X_s$ and $\overline V_s=Y_s$. Hence $U_s$ (resp. $V_s$) contains all generic points of irreducible components of $X_s$ (resp. $Y_s$). Since, for any $s\in W$, $g'_s:U_s\rightarrow V_s$ is surjective and radicial, we have a one-to-one correspondence of the generic points of irreducible components of $X_s$ and of $Y_s$, i.e., $I(s)=J(s)$. For any $s\in W$ and any $\alpha\in I(s)$, the map $g_s:(X_s)_\alpha\rightarrow (Y_s)_\alpha$ is generically radicial since $g'_s:((X_s)_\alpha\bigcap U_s)\rightarrow ((Y_s)_\alpha\bigcap V_s)$ is radicial.
\end{proof}

\begin{lemma}\label{lem:ab-4}
Let $T$ be an irreducible Noetherian scheme, $X$ and $Y$ integral $T$-schemes dominant and of finite type over $T$, and $g:X\rightarrow Y$ a $T$-morphism. We assume that $g$ is  dominant, generically finite, generically radicial and that the fibers $X_t$ and $Y_t$ are geometrically integral for any $t\in T$. We denote by $\zeta$ (resp. $\eta$) the generic points of $X$ (resp. $Y$) and,
for any geometric point $\bar t$ of $T$, by $\zeta_{\bar t}$ (resp. $\eta_{\bar t}$) the generic points of $X_{\bar t}$ (resp. $Y_{\bar t}$). Then, there exists a Zariski open dense subset $W$ of $T$ such that,  $[\zeta_{\bar t}:\eta_{\bar t}]=[\zeta:\eta]$ for any geometric point $\bar t\rightarrow W$. In particular, if the generic point of $T$ is of characteristic $0$, then the canonical map $g_{\bar t}:X_{\bar t}\rightarrow Y_{\bar t}$ is birational for any $\bar t\rightarrow W$.
\end{lemma}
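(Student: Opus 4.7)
The plan is to reduce $g\colon X \to Y$ to a finite flat morphism of constant rank $n := [\zeta:\eta]$ on dense opens, base change to a geometric fiber, and then use the geometric integrality hypothesis to identify the generic fiber of the base change as a field extension of degree exactly $n$.

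First I would use generic finiteness to choose an open dense $V \subset Y$ such that $g^{-1}(V) \to V$ is finite, and then shrink $V$ further by generic flatness so that, setting $U := g^{-1}(V)$, the restriction $g|_U\colon U \to V$ becomes finite and flat. Since $V$ is integral and the rank equals $n$ at the generic point, $g|_U$ is finite flat of constant rank $n$. The opens $U$ and $V$ are dense in $X$ and $Y$ and therefore dominate $T$, so by Chevalley's theorem there is an open dense $W \subset T$ such that for every $t \in W$ both fibers $U_t$ and $V_t$ are nonempty. For any geometric point $\bar t$ above such a $t$, the schemes $X_{\bar t}$ and $Y_{\bar t}$ are integral by the geometric integrality hypothesis, so $U_{\bar t}$ and $V_{\bar t}$ are nonempty opens in integral schemes, hence dense and integral, containing $\zeta_{\bar t}$ and $\eta_{\bar t}$ respectively; finiteness of $g_{\bar t}|_{U_{\bar t}}$ then forces $\zeta_{\bar t}$ to be the unique preimage of $\eta_{\bar t}$, so in particular $g_{\bar t}(\zeta_{\bar t}) = \eta_{\bar t}$.

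The heart of the argument is the explicit computation of the scheme-theoretic fiber of $g_{\bar t}|_{U_{\bar t}}$ over $\eta_{\bar t}$, which is a finite $k(\eta_{\bar t})$-algebra of dimension $n$. Locally writing $U_{\bar t} = \spec A$ and $V_{\bar t} = \spec B$ with $A,B$ domains, this fiber equals $A \otimes_B \mathrm{Frac}(B)$; flatness of $A$ over $B$ makes it a localisation of $A$ which embeds into $\mathrm{Frac}(A)$, and it is integral over the field $\mathrm{Frac}(B) = k(\eta_{\bar t})$, so it is a field, and as a field containing $A$ it must coincide with $\mathrm{Frac}(A) = k(\zeta_{\bar t})$. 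Comparing $k(\eta_{\bar t})$-dimensions yields $[\zeta_{\bar t} : \eta_{\bar t}] = n = [\zeta:\eta]$. For the \emph{in particular} part, the generic radicial hypothesis makes $k(\zeta)/k(\eta)$ purely inseparable of degree $n$; in characteristic zero this is forced to be trivial, so $n=1$ and the dominant map $g_{\bar t}$ is birational.

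The main subtlety, as I see it, is recognising that geometric integrality of the fibers $X_t$ (and not just integrality of $X$ itself) is precisely what guarantees $U_{\bar t}$ is integral, and hence that the fiber $A \otimes_B \mathrm{Frac}(B)$ is a domain rather than an Artinian ring of length $n$ whose residue field has strictly smaller degree over $k(\eta_{\bar t})$; without this input one would only obtain the inequality $[\zeta_{\bar t}:\eta_{\bar t}] \leq n$. The remaining ingredients, Chevalley's theorem, generic flatness, and constancy of the rank of a finite flat morphism on a connected integral base, are entirely routine.
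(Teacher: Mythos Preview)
Your proof is correct and follows essentially the same route as the paper: reduce to a finite flat morphism $U\to V$ over a dense open $V\subseteq Y$, shrink $T$ so that $V_t\neq\emptyset$, and identify the scheme-theoretic fiber $\eta_{\bar t}\times_V U$ with $\zeta_{\bar t}$ to read off the degree. The paper's version is terser---it simply asserts $\zeta_{\bar t}=\eta_{\bar t}\times_V U$ without justification---whereas you spell out why geometric integrality of $X_{\bar t}$ forces this fiber to be a field rather than a thickened point; this is exactly the point the paper leaves implicit.
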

\begin{proof}
Since $g:X\rightarrow Y$ is dominant, generically finite and generically radicial,  there is an open dense subset  $V$  of $Y$ such that  $g':U=V\times_Y X\rightarrow V$ is surjective, finite, flat and radicial, where $g'$ is the base change of $g$  by $V\hookrightarrow Y$.
Let $W$ be an open dense subset of $T$ such that the fiber $V_t$ is non-empty for any $t\in W$. Then, for any geometric point $\bar t$ of $W$,  we have $\zeta_{\bar t}=\eta_{\bar t}\times_{V_{\bar t}}U_{\bar t}=\eta_{\bar t}\times_VU$, hence $[\zeta_{\bar t}:\eta_{\bar t}]=[U:V]=[\zeta:\eta]$.

When the generic point of $T$ is of characteristic $0$, we have $[\zeta:\eta]=1$, i.e., $g:V\rightarrow U$ is an isomorphism.
\end{proof}

\begin{lemma}[{cf. \cite[Lemma 4.3]{Beilinson15}}]\label{interwell}
We take the notation and assumptions of \ref{projspace}, we assume that $S$ is an irreducible excellent Noetherian scheme and that the Veronese embedding $\wt i:\bP\rightarrow\wt\bP$ in \ref{projspace} has degree $d\geq 3$. Let $C_1$ and $C_2$ be irreducible closed conical subsets of $\ct(\bp/S)$ which are dominant over $S$. We consider $\rP(\wt i_{\circ} C_1)$ and $\rP(\wt i_{\circ} C_2)$ as closed subschemes of $\wt\bH$ by Legendre transform (\ref{legtran}). Then, after replacing $S$ by a Zariski open dense subset,
$\rP(\wt i_{\circ} C_1)$ and $\rP(\wt i_{\circ} C_2)$ generically intersect well relatively to $\wt\rpr^\vee:\wt\bH\rightarrow\wt\bP^{\vee}$.
\end{lemma}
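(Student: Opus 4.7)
The plan is to reduce the statement to Beilinson's Lemma 4.3 over the generic point $\eta$ of $S$, and then to spread the conclusion out to a Zariski open dense subscheme of $S$ using Lemma \ref{gendiml} together with the generic dimension-of-fibers results of \cite[III, 9.5.6 and 9.6.1]{EGA4}. Since the operations $\wt i_\circ$, the Legendre transform, projectivization, and fiber products all commute with base change, restriction to $\eta$ gives $(\rP(\wt i_\circ C_i))_\eta = \rP(\wt i_{\eta,\circ}C_{i,\eta})$ for $i=1,2$, where each $C_{i,\eta}$ is an irreducible closed conical subset of $\bT^*\bP_\eta$ (non-empty since $C_i$ dominates $S$).

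Because the Veronese embedding has degree $d\geq 3$, \cite[Lemma 4.3]{Beilinson15} applied over the field $k(\eta)$ yields that $\rP(\wt i_{\eta,\circ}C_{1,\eta})$ and $\rP(\wt i_{\eta,\circ}C_{2,\eta})$ intersect well relative to $\wt\rpr^{\vee}_\eta\colon \wt\bH_\eta\to \wt\bP^{\vee}_\eta$ in the sense of \ref{wlint}; in particular the intersection
$\bigl(\rP(\wt i_{\eta,\circ}C_{1,\eta})\times_{\wt\bP^{\vee}_\eta}\rP(\wt i_{\eta,\circ}C_{2,\eta})\bigr)\cap (\wt\bH_\eta)^{(2)}_{\wt\bP^{\vee}_\eta}$
is equidimensional over $k(\eta)$ of dimension
$\dim_{k(\eta)}\rP(\wt i_{\eta,\circ}C_{1,\eta})+\dim_{k(\eta)}\rP(\wt i_{\eta,\circ}C_{2,\eta})-\dim_{k(\eta)}\wt\bP^{\vee}_\eta$.

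Next I would spread this out. By Lemma \ref{gendiml}, after replacing $S$ by a Zariski open dense subscheme, every one of $\rP(\wt i_\circ C_1)$, $\rP(\wt i_\circ C_2)$, $\wt\bP^{\vee}$, the fiber product $\rP(\wt i_\circ C_1)\times_{\wt\bP^{\vee}}\rP(\wt i_\circ C_2)$, the intersection with $\wt\bH^{(2)}_{\wt\bP^{\vee}}$, and each of its finitely many irreducible components is generically dimensional and dominant over $S$. Applying \cite[III, 9.5.6]{EGA4} to this finite list, a further shrinking of $S$ makes all these schemes flat over $S$ with fibers of constant relative dimension equal to their generic-fiber dimension; by \cite[III, 9.6.1]{EGA4}, the irreducible components of the total intersection over $S$ then restrict bijectively to a full set of irreducible components of the generic-fiber intersection. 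Using the identity $\dim W=\dim S+\dim_{k(\eta)}W_\eta$ for each such component $W$, together with the equidimensionality of the generic fiber proved above, one obtains the required equidimensionality and the relative dimension formula $\dim \rP(\wt i_\circ C_1)+\dim \rP(\wt i_\circ C_2)-\dim \wt\bP^{\vee}$ for each irreducible component over $S$.

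The main obstacle is the bookkeeping in this spreading-out step: one must verify that after shrinking $S$, each irreducible component of the total-space intersection is dominant over $S$, is generically dimensional, and has generic fiber an irreducible component of the intersection over $\eta$, so that the equidimensionality and dimension formula transfer from $\eta$ to $S$. Once flatness and constancy of fiber dimensions on a dense open are in place, this matching is automatic and the lemma follows directly from Beilinson's field-case statement.
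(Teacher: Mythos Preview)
Your approach is essentially the same as the paper's: both reduce to Beilinson's Lemma 4.3 over the generic point $\eta$ and then spread out via generic flatness, matching irreducible components of $Y=(\rP(\wt i_\circ C_1)\times_{\wt\bP^\vee}\rP(\wt i_\circ C_2))\cap\wt\bH^{(2)}_{\wt\bP^\vee}$ with those of $Y_\eta$ to transfer the equidimensionality and the dimension formula. The paper cites \cite[II, 6.9.1]{EGA4} for generic flatness (and observes that smoothness of $d\wt i$ preserves flatness of $\wt i_\circ C_i$ once $C_i$ is flat), whereas you invoke \cite[III, 9.5.6, 9.6.1]{EGA4}; this is a cosmetic difference in bookkeeping rather than in strategy.
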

\begin{proof}
We may assume that $S$ is integral. By Lemma \ref{gendiml}, we may assume that all schemes are generically dimensional after shrinking $S$. By \cite[II, 6.9.1]{EGA4}, we may assume that $C_1$ and $C_2$ are flat over $S$ after shrinking $S$. Since the canonical morphism $d\wt i:\bT^*(\wt\bP/S)\times_{\wt\bP}\bP\rightarrow\bT^*(\bP/S)$ is smooth, conical subschemes $\wt i_{\circ}C_1$ and $\wt i_{\circ}C_2$ are flat over $S$. Hence, $\rP(\wt i_{\circ}C_1)$ and $\rP(\wt i_{\circ}C_2)$ are flat over $S$. Let  $\wt\delta:\wt\bH\rightarrow\wt\bH\times_{\wt\bP^{\vee}}\wt\bH$
be the diagonal map and
 $\wt\bH^{(2)}_{\wt\bP^{\vee}}$ the complement of $\wt\delta(\wt\bH)$ in $\wt\bH\times_{\wt\bP^{\vee}}\wt\bH$.
Shrinking $S$ again, we may assume that each irreducible component of
\begin{equation*}
Y=\left(\rP(\wt i_{\circ}C_2)\times_{\wt\bP^\vee}\rP(\wt i_{\circ}C_2)\right)\bigcap\wt\bH^{(2)}_{\wt\bP^{\vee}}
\end{equation*}
is flat over $S$. Let $\eta$ be the generic point of $S$. There is a one-to-one correspondence between the set of irreducible components of $Y$
and that of $Y_{\eta}$. By \cite[Lemma 4.3]{Beilinson15}, $(\rP(\wt i_{\circ}C_1))_\eta$ and $(\rP(\wt i_{\circ}C_2))_\eta$ intersect well relatively to $\wt\rpr_\eta:\wt\bH_\eta\rightarrow\wt\bP^{\vee}_\eta$.
Hence, $Y_\eta$ is equidimensional and we have
\begin{equation}\label{genewlinter}
 \dim_{k(\eta)}Y_\eta=\dim_{k(\eta)}(\rP(\wt i_{\circ}C_1))_\eta+\dim_{k(\eta)}(\rP(\wt i_{\circ}C_2))_\eta-\dim_{k(\eta)}\wt\bP^{\vee}_{\eta}.
\end{equation}
Hence, $Y$ is also equidimensional and \eqref{genewlinter} implies that
\begin{equation*}
  \dim Y=\dim (\rP(\wt i_{\circ}C_1))+\dim (\rP(\wt i_{\circ}C_2))-\dim \wt\bP^{\vee}.
\end{equation*}
Hence, $\rP(\wt i_{\circ} C_1)$ and $\rP(\wt i_{\circ} C_2)$ generically intersect well relatively to $\wt\rpr^\vee:\wt\bH\rightarrow\wt\bP^{\vee}$.
\end{proof}

\begin{proposition}[{cf. \cite[Proposition 4.5]{Beilinson15}}]\label{geomctod}
We take the notation and assumptions of \ref{projspace}, we assume that $S$ is an irreducible generically dimensional excellent Noetherian scheme and that the Veronese embedding $\wt i:\bP\rightarrow\wt\bP$ in \ref{projspace} has degree $d\geq 3$. Let $C$ be a closed conical subset of $\bT^*(\bP/S)$ such that $C_\eta\neq\emptyset$ is dominant over $S$ and that $\dim C\leq\dim \bP$. Then,
\begin{enumerate}
\item If $C$ is irreducible, then the map $\wt\rpr^{\vee}: \rP(\wt i_\circ C)\rightarrow \wt\rpr^{\vee}(\rP(\wt i_\circ C))$ is generically small after replacing $S$ by a Zariski open dense subscheme.
\item After replacing $S$ by a Zariski open dense subscheme, there is a one-to-one correspondence between the sets of irreducible components of $C$ and $\wt\rpr^{\vee}(\rP(\wt i_\circ C))$.
\end{enumerate}\end{proposition}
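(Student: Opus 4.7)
The plan is to adapt Beilinson's argument \cite[Proposition 4.5]{Beilinson15} to the relative setting using the generic well-intersection and generic smallness formalism of Lemma \ref{interwell} and \ref{genericsmall}. After replacing $S$ by a Zariski open dense subscheme, I reduce to the case where $S$ is integral with generic point $\eta$, every relevant irreducible closed conical subset of $\bT^*(\bP/S)$ is generically dimensional and flat over $S_{\mathrm{red}}$ (Lemma \ref{gendiml} and \cite[II, 6.9.1]{EGA4}), every irreducible component of $C$ is dominant over $S$, and $C$ is strict (non-strict components die after taking $\rP$). The map $d\wt i:\bT^*(\wt\bP/S)\times_{\wt\bP}\bP \to \bT^*(\bP/S)$ is a smooth surjection of relative dimension $N-1-n$ and $\rpr_1$ identifies $d\wt i^{-1}(-)$ with $\wt i_\circ(-)$ as subsets of $\bT^*(\wt\bP/S)$ over $\wt i(\bP)$; hence for every irreducible strict conical $C'\subseteq \bT^*(\bP/S)$ dominant over $S$,
\[
\dim \rP(\wt i_\circ C') = \dim C' + N - n - 2.
\]

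For part~(1), set $Z = \rP(\wt i_\circ C)$ and apply Lemma \ref{interwell} with $C_1=C_2=C$: after further shrinking $S$, $Z$ generically intersects well with itself relative to $\wt\rpr^\vee$, yielding
\[
\dim\bigl((Z\times_{\wt\bP^\vee} Z)\setminus\delta(Z)\bigr) = 2\dim Z - \dim\wt\bP^\vee.
\]
With $\dim \wt\bP^\vee = \dim S + N - 1$ and the hypothesis $\dim C \leq \dim \bP = \dim S + n$, one has $\dim Z = \dim C + N - n - 2 \leq \dim S + N - 2 < \dim\wt\bP^\vee$, so $\dim((Z\times_{\wt\bP^\vee} Z)\setminus\delta(Z)) < \dim Z$. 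By the definition in \ref{genericsmall} this is exactly generic smallness of $\wt\rpr^\vee|_Z:Z\to\wt\rpr^\vee(Z)$, proving~(1); in particular the map is generically radicial and generically finite.

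For part~(2), write $C=\bigcup_\alpha C_\alpha$ with $C_\alpha$ the irreducible components. The surjectivity of $d\wt i$ gives $\wt i_\circ C_\alpha \subseteq \wt i_\circ C_\beta$ iff $C_\alpha \subseteq C_\beta$, and strictness transfers this to the bijection with projectivizations. Applying Lemma \ref{interwell} to each pair $(C_\alpha, C_\beta)$, the sets $\rP(\wt i_\circ C_\alpha)$ and $\rP(\wt i_\circ C_\beta)$ generically intersect well relative to $\wt\rpr^\vee$; each has dimension $<\dim\wt\bP^\vee$ by the formula above; and each is generically finite over its image by part~(1). The first assertion of \ref{genericsmall} then yields
\[
\rP(\wt i_\circ C_\alpha) \subseteq \rP(\wt i_\circ C_\beta) \iff \wt\rpr^\vee(\rP(\wt i_\circ C_\alpha)) \subseteq \wt\rpr^\vee(\rP(\wt i_\circ C_\beta)).
\]
Since the $C_\alpha$ are pairwise distinct irreducible components, the irreducible closed subsets $\wt\rpr^\vee(\rP(\wt i_\circ C_\alpha))$ are pairwise incomparable and constitute the irreducible decomposition of $\wt\rpr^\vee(\rP(\wt i_\circ C))$, which gives the desired bijection.

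The main obstacle is not the geometric content, which closely parallels Beilinson's absolute argument, but the bookkeeping required to guarantee generic dimensionality, generic flatness, and generic dominance over $S$ simultaneously, so that all well-intersection and smallness assertions descend from the generic fibre to a Zariski open dense subscheme of $S$.
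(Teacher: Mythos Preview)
Your proof is correct and follows essentially the same approach as the paper: for (1) you invoke Lemma~\ref{interwell} on the self-intersection of $\rP(\wt i_\circ C)$ and conclude generic smallness via the dimension inequality $\dim\rP(\wt i_\circ C)<\dim\wt\bP^\vee$ and \ref{genericsmall}, and for (2) you apply Lemma~\ref{interwell} to each pair $(C_\alpha,C_\beta)$ and use the first assertion of \ref{genericsmall} to obtain the bijection of irreducible components. The only difference is that you spell out the dimension formula $\dim\rP(\wt i_\circ C')=\dim C'+N-n-2$ and the preliminary reductions more explicitly than the paper does.
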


\begin{proof}
(1) By Lemma \ref{gendiml}, we may assume that $C$ and $\rP(\wt i_\circ C)$ are generically dimensional after shrinking $S$.
We have $\dim \rP(\wt i_\circ C)\leq\dim S+N-2$. Hence $\dim \rP(\wt i_\circ C)<\dim\wt\bP$. After shrinking $S$, the irreducible scheme
$\rP(\wt i_\circ C)\subseteq\wt\bH$ generically intersects well relatively to $\wt\bP^{\vee}$ with itself (Lemma \ref{interwell}). Hence, $\wt\rpr^{\vee}: \rP(\wt i_\circ C)\rightarrow \wt\rpr^{\vee}(\rP(\wt i_\circ C))$ is generically small, hence generically radicial (cf. \ref{genericsmall}).

(2) Let $\{C_v\}_{v\in I}$ be set of the irreducible component of $C$. There is a one-to-one correspondence between $\{C_v\}_{v\in I}$ and $\{\rP(\wt i_{\circ}C_v)\}_{v\in I}$. After shrinking $S$, we may assume that,  for each $v\in I$, we have $(C_v)_\eta\neq \emptyset$ and $\rP(\wt i_{\circ} C_v)$ is generically dimensional and that, for each pair $v,v'\in I$ ($v$ and $v'$ can be the same), $\rP(\wt i_{\circ} C_v)$ and $\rP(\wt i_{\circ} C_{v'})$ intersect will relatively to $\wt\bP^{\vee}$ (Lemma \ref{interwell}).
 Since $\rP(\wt i_{\circ}C_v)$ $(v\in I)$ are distinct irreducible subsets of $\wt\bH$, $\wt\rpr^{\vee}(\rP(\wt i_{\circ}C_v))$ $(v\in I)$ are distinct irreducible subsets of $\wt\bP^{\vee}$ (\ref{genericsmall}). Hence, we have a one-to-one correspondence between $\{\rP(\wt i_{\circ}C_v)\}_{v\in I}$ and $\{\wt\rpr^\vee(\rP(\wt i_{\circ}C_v))\}_{v\in I}$ by the projection $\wt\rpr^\vee:\wt\bH\rightarrow\wt\bP^{\vee}$. We obtain (2).
\end{proof}

\subsection{}\label{radontran}
We take the notation and assumptions of \ref{projspace} and we assume that $\ell$ is invertible in $S$. We define the {\it Radon transform} by the functor \cite{Bry}
\begin{equation*}
R_S:D^b_c(\mathbb P,\Lambda)\rightarrow D^b_c(\mathbb P^{\vee},\Lambda),\ \ \ R_S(\mathcal F)=\mathrm R \mathrm{pr}^{\vee}_*(\mathrm{pr}^*\mathcal F)[n-1],
\end{equation*}
and we define the {\it dual Radon transform} by the functor
\begin{equation*}
R^{\vee}_S:D^b_c(\mathbb P^{\vee},\Lambda)\rightarrow D^b_c(\mathbb P,\Lambda),\ \ \ R_S^{\vee}(\mathcal G)=\mathrm R \mathrm{pr}_*(\mathrm{pr}^{\vee *}\mathcal G)(n-1)[n-1].
\end{equation*}
Since $\rpr:\bH\rightarrow \bP$ and $\rpr^\vee:\bH\rightarrow\bP^\vee$ are projective and smooth, The functor $R_S$ is both left and right adjoint to $R_S^\vee$ by Poincar\'e duality and proper base change theorem.
We denote by $\wt R_S$ and $\wt R^\vee_S$
the Radon transform and dual Radon transform for
the pair of morphisms $(\wt\rpr,\wt\rpr^\vee):\wt \bP\leftarrow \wt\bH\rightarrow\wt\bP^{\vee}$, respectively.

\begin{proposition}[{cf. \cite[1.6.1]{Beilinson15}}]\label{bei1.6.1}
We take the notation and assumptions of \ref{projspace} and we assume that $\ell$ is invertible in $S$. Let $\cF$ be an object of $D^b_c(\bP,\Lambda)$ and $\cG$ an object of $D^b_c(\bP^\vee,\Lambda)$.  Then, the mapping cone of the adjunction map $\cF\rightarrow (R^\vee_S\circ R_S)(\cF)$ (resp. $(R_S\circ R^\vee_S)(\cG)\rightarrow \cG$) has locally constant cohomologies, after replacing $S$ by a Zariski open dense subset.

\end{proposition}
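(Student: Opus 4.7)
The strategy is to mimic Beilinson's absolute computation of $R^{\vee}\circ R$ in \cite[1.6.1]{Beilinson15}, and then invoke Deligne's generic base change theorem to handle the variation over $S$.

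\textbf{Step 1 (Pull-push on a correspondence).} By proper base change applied to the Cartesian square formed by the two projections $p_1,p_2\colon Q:=\bH\times_{\bP^{\vee}}\bH\to\bH$, and combining with $\rpr^*$ and $R\rpr_*$, one rewrites
\[
R^{\vee}_S\bigl(R_S(\cF)\bigr)\;\simeq\; R(q_2)_*\bigl(q_1^*\cF\otimes_\Lambda^L R\sigma_*\Lambda\bigr)(n-1)[2n-2],
\]
where $\sigma\colon Q\to\bP\times_S\bP$ is the natural projection and $q_1,q_2\colon\bP\times_S\bP\to\bP$ are the two projections.

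\textbf{Step 2 (Stratify and identify the identity summand).} Let $\Delta\subset\bP\times_S\bP$ denote the diagonal and $U$ its open complement. Over $U$ the morphism $\sigma$ is a $\bP^{n-2}$-fibration (hyperplanes through two distinct points), and over $\Delta$ it is a $\bP^{n-1}$-fibration, so $R\sigma_*\Lambda$ is an explicit extension of finite direct sums of shifted Tate twists on each stratum. Feeding the resulting excision triangle into the formula of Step 1 and applying the projection formula, the diagonal stratum contributes a direct sum of shifted copies of $\cF$, exactly one of which realises the adjunction unit $\cF\to R^{\vee}_S R_S\cF$. The cone is then built from the remaining degree-shifted copies of $\cF$ coming from $\Delta$, together with the $U$-contribution. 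After a further excision to replace $j_!\Lambda$ by $\Lambda_{\bP\times_S\bP}$ modulo a diagonal piece, this $U$-contribution reduces to shifted and twisted copies of $q_2^*R\pi_*\cF$, where $\pi\colon\bP\to S$ is the structural morphism.

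\textbf{Step 3 (Generic base change).} By Deligne's generic base change theorem, there exists a Zariski open dense subset $V\subset S$ such that every $R^i\pi_*\cF$ is locally constant constructible on $V$. Restricting to $\bP_V$ makes every cohomology sheaf of the cone locally constant, as claimed. The statement for $\cG$ is entirely dual and follows by swapping $\bP$ with $\bP^{\vee}$ and $R_S$ with $R^{\vee}_S$.

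\textbf{Main obstacle.} The delicate point is the bookkeeping in Step 2: tracking the Tate twists and degree shifts, verifying that the summand identified with the adjunction unit is the correct one on $\Delta$, and checking that the $U$-contribution really reduces to expressions in the absolute fibrewise cohomology $R\pi_*\cF$ rather than more intricate global data. Beilinson carries out exactly this calculation when $S$ is a point (\cite[1.6.1]{Beilinson15}); the only genuinely new ingredient in the relative setting is the appeal to generic base change to trivialise these residual terms.
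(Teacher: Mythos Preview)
Your argument is correct, but it takes a substantially more explicit route than the paper does. The paper's own proof is a two-line reduction: by proper base change the formation of the cone commutes with restriction to fibers, so over the generic point $\eta$ of $S$ the cone has locally constant cohomologies by Beilinson's absolute result \cite[1.6.1]{Beilinson15}; since the cone is a constructible complex on $\bP$, the locus where its cohomology sheaves fail to be locally constant is closed and misses $\bP_\eta$, hence lies over a proper closed subset of $S$. No kernel computation is redone.

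Your approach instead reruns Beilinson's calculation of $R^\vee_S\circ R_S$ in the relative setting and identifies the cone explicitly as a finite sum of shifts and twists of $\pi^*R\pi_*\cF$, after which Deligne's generic base change trivializes it. This buys you an actual formula for the cone rather than a black-box appeal, and makes the role of $R\pi_*\cF$ transparent. The cost is the bookkeeping you flag in your ``main obstacle'': note in particular that your Step~2 phrasing is slightly off---the diagonal contributes exactly \emph{one} copy of $\cF$ (from the top cohomology class of the $\bP^{n-1}$-fiber, since the lower classes extend over all of $\bP\times_S\bP$), and it is the global part $\bigoplus_{j=1}^{n-1}\pi^*R\pi_*\cF(j)[2j]$ that constitutes the entire cone once the cancellations between the $j_!$-correction and the extra diagonal terms are carried out. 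This does not affect correctness, but it is worth stating cleanly if you keep this approach.
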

\begin{proof}
The proposition is valid when $S$ is spectrum of a field \cite[1.6.1]{Beilinson15}. Hence, (1) and (2) still hold since we allow shrinking $S$.
\end{proof}

\begin{lemma}\label{nclfiber}
 Let $S$ and $X$ be irreducible Noetherian schemes and $f:X\rightarrow S$ a flat morphism of finite type with irreducible fibers. We assume that $\ell$ is invertible in $X$. Let $\cF$ be an object of $D^b_c(X,\Lambda)$. Then, there exists a Zariski open dense subset $V$ of $S$ such that, for any $s\in V$, we have (\ref{ncl})
\begin{equation*}
(NC(\cF,X)\times_Ss)_{\red}=NC(\cF|_{X_s},X_s).
\end{equation*}
\end{lemma}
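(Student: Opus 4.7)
The plan is to reduce both non-constant loci to a common constructible stratification of $X$ adapted to $\cF$, and to use generic flatness over $S$ to control how this stratification interacts with fibers. First, since $\cF \in D^b_c(X,\Lambda)$, I choose a finite stratification $X = \bigsqcup_{\alpha\in A} X_\alpha$ by reduced, irreducible, locally closed subschemes on which every cohomology sheaf $\cH^i(\cF)$ is locally constant; let $X_0$ denote the unique open stratum, which contains the generic point $\xi$ of $X$. Applying generic flatness (\cite[IV, 6.9.1, 9.5.6, 9.6.1]{EGA4}) and the constructibility of the locus of geometric irreducibility to each $X_\alpha \to S$ and to each $\overline{X_\alpha} \to S$, and then removing from $S$ the images of the finitely many strata that do not dominate $S$, I obtain a Zariski open dense $V \subseteq S$ such that for every $\alpha \in A$ either $X_\alpha \times_S V = \emptyset$ or $X_\alpha \to V$ is flat surjective with geometrically irreducible fibers, and moreover $(\overline{X_\alpha})_s = \overline{(X_\alpha)_s}$ (closure taken in $X_s$) as reduced closed subschemes for every $s \in V$. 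In particular $(X_0)_s$ is a nonempty open subset of $X_s$ containing the generic point $\eta_s$.

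Next, I describe $NC(\cF,X)$ stratum by stratum. For any $x \in X_\alpha$ with $\eta_\alpha$ the generic point of $X_\alpha$, local constancy of $\cH^i(\cF)|_{X_\alpha}$ on the irreducible $X_\alpha$ yields a cospecialization isomorphism $\cF_{\bar x} \xrightarrow{\sim} \cF_{\bar\eta_\alpha}$, so by functoriality the condition $x \in C_0(\cF,X)$ depends only on $\alpha$. Setting $J = \{\alpha \in A : \mathrm{cosp}\colon \cF_{\bar\eta_\alpha} \to \cF_{\bar\xi}\text{ is an isomorphism}\}$, I obtain $NC_0(\cF,X) = \bigsqcup_{\alpha \notin J} X_\alpha$ and hence $NC(\cF,X) = \bigcup_{\alpha \notin J} \overline{X_\alpha}$. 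The identical argument applied to $X_s$, using its induced stratification $X_s = \bigsqcup_\alpha (X_\alpha)_s$ (nonempty pieces irreducible by the first step) and generic point $\eta_s$, produces $NC(\cF|_{X_s}, X_s) = \bigcup_{\alpha \notin J_s} \overline{(X_\alpha)_s}$ for a corresponding subset $J_s$ of $\{\alpha : (X_\alpha)_s \neq \emptyset\}$.

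Finally, I show $J_s = J$ among those $\alpha$ with $(X_\alpha)_s \neq \emptyset$ and assemble the conclusion. For such an $\alpha$, let $\eta_{\alpha,s}$ denote the generic point of $(X_\alpha)_s$. Functoriality of cospecialization gives a commutative square whose horizontal arrows are $\cF_{\overline{\eta_{\alpha,s}}} \to \cF_{\bar\eta_\alpha}$ (an isomorphism by local constancy on $X_\alpha$) and $\cF_{\bar\eta_s} \to \cF_{\bar\xi}$ (an isomorphism since both $\eta_s$ and $\xi$ lie in $X_0$), and whose vertical arrows $\cF_{\overline{\eta_{\alpha,s}}} \to \cF_{\bar\eta_s}$ and $\cF_{\bar\eta_\alpha} \to \cF_{\bar\xi}$ measure membership in $J_s$ and in $J$ respectively. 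Commutativity forces the two verticals to be isomorphisms simultaneously, so $J_s = J$. Combined with $(\overline{X_\alpha})_s = \overline{(X_\alpha)_s}$ from the first step, this yields $(NC(\cF,X) \times_S s)_{\mathrm{red}} = \bigcup_{\alpha \notin J,\, X_\alpha \neq \emptyset} \overline{(X_\alpha)_s} = NC(\cF|_{X_s}, X_s)$ for $s \in V$. The main technical point is securing the closure-fiber identity $(\overline{X_\alpha})_s = \overline{(X_\alpha)_s}$: it requires generic flatness of $\overline{X_\alpha} \to V$ together with geometric irreducibility of its fibers, both arranged by the standard EGA shrinking in the first step.
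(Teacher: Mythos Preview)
Your proof is correct and takes a more explicit route than the paper's. The paper argues directly: setting $Z_0 = NC_0(\cF, X)$ and $Z = \overline{Z_0}$, it applies \cite[III, 9.6.1]{EGA4} once to the dominant inclusion $Z_0\hookrightarrow Z$ to find $V$ such that $(Z_0)_s$ is dense in $Z_s$ for $s \in V$, asserts $(Z_0)_s = NC_0(\cF|_{X_s}, X_s)$, and concludes $(Z_s)_{\mathrm{red}} = \overline{(Z_0)_s} = NC(\cF|_{X_s}, X_s)$. That asserted equality hinges on $\eta_s \in C_0(\cF,X)$, i.e.\ on the cospecialization $\cF_{\bar\eta_s}\xrightarrow{\sim}\cF_{\bar\xi}$, a point the paper leaves implicit; your stratification makes this transparent by forcing $\eta_s$ into the open stratum $X_0$, and your commutative-square step is precisely the justification the paper omits. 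The trade-off is that the paper's argument is three lines while yours unpacks $NC_0$ stratum by stratum; in return you obtain an explicit description $NC(\cF,X)=\bigcup_{\alpha\notin J}\overline{X_\alpha}$ and a fully self-contained verification that the fiberwise and global non-constant loci match. A minor remark: geometric irreducibility of the $(X_\alpha)_s$ is more than you need---ordinary irreducibility already gives a unique generic point $\eta_{\alpha,s}$, and the closure identity $(\overline{X_\alpha})_s=\overline{(X_\alpha)_s}$ only uses the density statement from \cite[III, 9.6.1]{EGA4}.
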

\begin{proof}
We simply put $Z_0=NC_0(\cF,X)$ and put $Z=NC(\cF,X)$. The canonical map $i:Z_0\rightarrow Z$ is dominant. By \cite[III, 9.6.1]{EGA4}, there exists a Zariski open dense subset $V$ of $S$ such that for any $s\in V$, the map $i_s:(Z_0)_s\rightarrow Z_s$ is dominant.
For any $s\in V$, we have $(Z_0)_s=NC_0(\cF|_{X_s}, X_s)$ and $\overline{(Z_0)_s}=NC(\cF|_{X_s}, X_s)$. Hence, for any $s\in V$, we have 
\begin{equation*}
(Z_s)_{\mathrm{red}}=\overline{(Z_0)_s}=NC(\cF|_{X_s}, X_s).
\end{equation*}
\end{proof}

\section{Relative singular supports}
In this section, let $S$ be a connected Noetherian scheme,  $f:X\rightarrow S$ a smooth morphism of finite type and
$C$ a closed conical subset in the relative cotangent bundle $\mathbb T^\ast(X/S)$.

\subsection{}
A {\it test pair of} $X$ {\it relative to} $S$ is a pair of morphisms $(g,h): Y\leftarrow U\rightarrow X$ such that $U$ and $Y$ are $S$-schemes smooth and of finite type over $S$ and $g:U\rightarrow Y$ and $h:U\rightarrow X$ are $S$-morphisms. We say that $(g,h)$ is $C$-{\it transversal relative to} $S$ if $h:U\rightarrow X$ is $C$-transversal relative to $S$ and $g:U\rightarrow Y$ is $h^{\circ}C$-transversal relative to $S$.

We say that a test pair $(g,h):Y\leftarrow U\rightarrow X$ relative to $S$ is {\it weak} if it satisfies: (a) $Y=\mathbb A^1_S$ is an affine line over $S$ and (b) the morphism $h:U\rightarrow X$ is
\begin{itemize}
\item{} a composition of $\rpr_1:U=V\times_SS'\rightarrow V$ and $h':V\rightarrow X$, where $S'$ is finite and \'etale over $S$ and $h':V\rightarrow X$ is an open immersion, if $S_{\mathrm{red}}$ is a spectrum of a finite field;
\item{}  an open immersion in other situations.
\end{itemize}


\subsection{}
In the following of this section, we assume that all schemes are over $\bZ[1/\ell]$. Let $\mathcal F$ be an object in $D^b_c(X,\Lambda)$. We say that a test pair $(g,h):Y\leftarrow U\rightarrow X$ relative to $S$ is $\mathcal F$-{\it acyclic} if $g:U\rightarrow Y$ is locally acyclic with respect to $h^*\mathcal F$.

\subsection{}
We say that $\mathcal F$ is {\it micro-supported on} $C$ {\it relative to} $S$ if every $C$-transversal test pair of $X$ relative to $S$ is $\mathcal F$-acyclic.  We define  $\mathcal C(\mathcal F, X/S)$  the set
\begin{equation*}
\{C'\subseteq \mathbb T^*(X/S)\,|\,C'\;\textrm{is closed conical and}\;\mathcal F\;\, \textrm{is micro-supported on}\;C'\; \textrm{relative to}\; S\}.
\end{equation*}
We will see that $\mathcal C(\mathcal F,X/S)$ is non-empty if $f:X\rightarrow S$ is universally locally acyclic relative to $\mathcal F$ (cf. Proposition \ref{lem:nonempty}).
 We denote by $\mathcal C^{\mathrm{min}}(\mathcal F, X/S)$ the set of the minimal elements of $\mathcal C(\mathcal F, X/S)$. 

We say that $\mathcal F$ is {\it weakly micro-supported on} $C$ {\it relative to} $S$ if every $C$-transversal weak test pair $(g,h)$ relative to $S$ which is $C$-transversal relative to $S$ is $\mathcal F$-acyclic. We define  $\mathcal C^w(\mathcal F, X/S)$  the set
\begin{equation*}
\{C'\subseteq \mathbb T^*(X/S)\,|\,C'\;\textrm{is closed conical and}\;\mathcal F\;\, \textrm{is weakly micro-supported on}\;C'\; \textrm{relative to}\; S\}.
\end{equation*}
Note that $\mathcal C(\mathcal F, X/S)\subseteq \mathcal C^w(\mathcal F, X/S)$.
Let $C_1$ and $C_2$ be two elements of $\mathcal C^w(\mathcal F, X/S)$. We choose a weak test pair $(g,h)$ of $X$ relative to $S$ which is $(C_1\cap C_2)$-transversal relative to $S$. Then $g:U\rightarrow Y$ is $h^{\circ}(C_1\cap C_2)$-transversal relative to $S$.  For each geometric point $\bar u$ of $U$,  $g$ is either $h^{\circ}C_1$-transversal relative to $S$ at $\bar u$ or $h^{\circ}C_2$-transversal relative to $S$ at $\bar u$, since $\mathbb T^*(Y/S)$ is a line bundle on $Y$. Then, $U$ can be covered by two Zariski open subsets $U_1$ and $U_2$ such that $g|_{U_i}:U_i\rightarrow Y$ is $C_i$-transversal relative to $S$ $(i=1,2)$. Since $C_1, C_2\in \mathcal C^w(\mathcal F, X/S)$, the map $g_i$ $(i=1,2)$ is locally acyclic with respect to $\mathcal F$, i.e., the map $g$ is locally acyclic with respect to $\mathcal F$. It implies that the weak test pair $(g,h)$ is $\mathcal F$-acyclic. We deduce that $C_1\cap C_2\in \mathcal C^w(\mathcal F, X/S)$. In conclusion, the set $\mathcal C^w(\mathcal F, X/S)$ has a smallest element if it is non-empty. 

\subsection{}\label{sub:rss}
If $\mathcal C(\mathcal F, X/S)$ has a smallest element, we denote it by $SS(\mathcal F, X/S)$ and call it the {\it singular support} of $\mathcal F$ {\it relative to} $S$.  We call the smallest element of $\mathcal C^w(\mathcal F, X/S)$ the {\it weak singular support} of $\mathcal F$ relative to $S$ and denote it by $SS^w(\mathcal F,X/S)$.
If $SS(\cF,X/S)$ exists, then $SS^{\omega}(\mathcal F, X/S)\subseteq SS(\mathcal F, X/S)$.

Notice that the relative singular support and the weak relative singular support are invariant after taking reduced induced subscheme for $S$ and $X$.

\begin{example}\label{ex:zero}
Assume that $f=\mathrm{id}_X$. Then $\mathbb T^\ast(X/X)=X$ and the $X$-test pair $(\mathrm{id},\mathrm{id}):X\leftarrow X\rightarrow X$ is $\mathbb T^\ast(X/X)$-transversal relative to $X$. Hence, for an object $\cF$ of $D^b_c(X,\Lambda)$, the relative singular support $SS(\mathcal F, X/X)$ exists if and only if $\mathcal F$ has locally constant cohomologies.
\end{example}

\begin{remark}
All definitions above were firstly introduced by Beilinson when $S=\mathrm{Spec}(k)$ \cite{Beilinson15}.  In this case, we will omit the phase "relative to $S$"  and we abbreviate the notion $SS(\cF, X/S)$ (resp. $SS^w(\cF,X/S)$) by $SS(\cF)$ (resp. $SS^w(\cF)$) to fit the notation in {\it loc. cit.}. Beilinson proved that $SS(\cF)$ exists, that $SS(\cF)=SS^w(\cF)$ and that, when $X$ is equidimensional, $SS(\cF)$ is of equidimension $\dim_kX$ ({\it loc. cit.}).  
\end{remark}

\begin{lemma}\label{lem:1}
Let $U$ and $Y$ be $S$-schemes smooth and of finite type over $S$ and let $h:U\rightarrow X$ and $g:X\rightarrow Y$ be $S$-morphisms.
\begin{itemize}
\item[(1)] If $h:U\rightarrow X$ is smooth, then, for any closed conical subset $C$ of $\mathbb T^\ast (X/S)$, the morphism $h:U\rightarrow X$ is $C$-transversal relative to $S$. Moreover we have $h^\circ C=C\times_X U$.

\item[(2)]  If $g\colon X\rightarrow Y$ is $C$-transversal relative to $S$, then it is smooth on a Zariski neighborhood of the base $B(C)$.
\end{itemize}
\end{lemma}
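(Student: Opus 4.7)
For part (1), the plan is to exploit the fact that a smooth morphism $h\colon U\to X$ between smooth $S$-schemes gives rise to a locally split short exact sequence of locally free sheaves
\begin{equation*}
0\to h^*\Omega^1_{X/S}\to \Omega^1_{U/S}\to \Omega^1_{U/X}\to 0.
\end{equation*}
Passing to fibres at any geometric point $\bar u\to U$, one obtains an injection $\Omega^1_{X/S}(h(\bar u))\hookrightarrow \Omega^1_{U/S}(\bar u)$, i.e. the canonical map $dh_{\bar u}\colon \bT^*_{h(\bar u)}(X/S)\to \bT^*_{\bar u}(U/S)$ is injective. In particular, for any nonzero $\mu\in C_{h(\bar u)}$ we have $dh_{\bar u}(\mu)\neq 0$, so $h$ is $C$-transversal relative to $S$. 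Globally, $dh\colon \bT^*(X/S)\times_XU\to \bT^*(U/S)$ is then a closed immersion of vector bundles over $U$, so its set-theoretic image coincides with $dh(C\times_XU)\cong C\times_XU$, giving $h^\circ C=C\times_XU$.

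For part (2), the key observation is that for every $x\in B(C)$ the zero vector of $\bT^*_x(X/S)$ lies in $C_x$. Indeed, $x\in B(C)$ means $C_x\neq\emptyset$; picking any $c\in C_x$, the $\bG_m$-invariance of $C$ gives $tc\in C_x$ for all $t\in\bG_m$, and taking the limit $t\to 0$ (using that $C$ is closed) yields $0\in C_x$. Now apply the $C$-transversality hypothesis at a geometric point $\bar x$ above $x$: it says every nonzero $\nu\in \bT^*_{g(\bar x)}(Y/S)$ satisfies $dg_{\bar x}(\nu)\notin C_{\bar x}$. Since $0\in C_{\bar x}$, this forces $dg_{\bar x}\colon \bT^*_{g(\bar x)}(Y/S)\to \bT^*_{\bar x}(X/S)$ to be injective. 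Equivalently, the pullback map $g^*\Omega^1_{Y/S}(\bar x)\to \Omega^1_{X/S}(\bar x)$ is injective on the fibre at $\bar x$.

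To conclude, I would invoke the Jacobian criterion for smoothness: since $X$ and $Y$ are smooth over $S$, the morphism $g\colon X\to Y$ is smooth at $x$ if and only if the canonical map of locally free sheaves $g^*\Omega^1_{Y/S}\to \Omega^1_{X/S}$ is fibrewise injective at $x$ (this follows from the conormal exact sequence and a dimension count on $\Omega^1_{X/Y}$, cf. \cite[IV, 17.11.1]{EGA4}). Thus $g$ is smooth at every point of $B(C)$, and smoothness being an open condition, there exists a Zariski open neighborhood of $B(C)$ on which $g$ is smooth. The most delicate point is really the observation that $0\in C_x$ for $x\in B(C)$; once it is established, both parts reduce to standard facts about cotangent complexes of smooth morphisms, and no serious obstacle is anticipated.
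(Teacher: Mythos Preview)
Your proposal is correct and follows essentially the same approach as the paper's proof: both parts reduce to the injectivity of the relevant fibrewise cotangent maps, followed by the Jacobian/differential criterion for smoothness. You supply slightly more detail than the paper does---in particular, your explicit justification that $0\in C_{\bar x}$ for $\bar x$ over $B(C)$ (via the $\mathbb G_m$-action and closedness of $C$) makes precise a step the paper leaves implicit, and your reference to EGA IV plays the same role as the paper's citation of \cite[2.5.7]{Fu15}.
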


\begin{proof}If $h$ is smooth, then $dh\colon \mathbb T^\ast(X/S)\times_X U\rightarrow \mathbb T^\ast(U/S)$ is injective. We obtain (1) by definition.

Let $x$ be a point of $B(C)$ and $\bar x\rightarrow B(C)$ a geometric point above $x$.
If $g\colon X\rightarrow Y$  is $C$-transversal relative to $S$, then the canonical map $dg_{\bar x}\colon\mathbb T^\ast_{g(\bar x)}(Y/S)\rightarrow \mathbb T_{\bar x}^\ast(X/S)$ is injective. It is equivalent to that the canonical morphism of coherent $\mathcal O_X$-modules $g^*\Omega^1_{Y/S}\rightarrow \Omega^1_{X/S}$ is injective at $x$. By \cite[2.5.7]{Fu15}, we deduce that $g$ is smooth on a Zariski neighborhood of $x$ in $X$. Hence, we obtain (2).
\end{proof}

\begin{lemma}\label{lem:2}
Let $(g,h): Y\leftarrow U\rightarrow X$ be a test pair of $X$ relative to $S$.
\begin{itemize}
\item[(1)] Assume that $C=\mathbb T^\ast_X(X/S)$. Then $(g, h)$ is $C$-transversal relative to $S$ if and only if $g$ is smooth.
\item[(2)] Assume that $C=\mathbb T^\ast(X/S)$.  Then $(g,h)$ is $C$-transversal relative to $S$ if and only if the canonical map $h\times g\colon U\rightarrow X\times_S Y$ is smooth.
\end{itemize}
\end{lemma}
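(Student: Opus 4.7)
My plan is to deduce both parts by unwinding the definitions of $C$-transversality relative to $S$ from \S\ref{sub:t} into pointwise linear-algebraic conditions on the differentials $dh_{\bar u}$ and $dg_{\bar u}$, and then appealing to the Jacobian criterion for smoothness that already underpins the proof of Lemma \ref{lem:1}(2).

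For part (1), the key observation is that taking $C$ to be the zero-section of $\bT^*(X/S)$ makes $h\colon U\to X$ tautologically $C$-transversal relative to $S$, since the defining condition quantifies only over non-zero vectors in $C$. It is then immediate that $h^\circ C = \bT^*_U(U/S)$ is the zero-section of $\bT^*(U/S)$. The remaining condition, that $g\colon U\to Y$ be $h^\circ C$-transversal relative to $S$, reads at each geometric point $\bar u$ as the injectivity of $dg_{\bar u}\colon \bT^*_{g(\bar u)}(Y/S)\to \bT^*_{\bar u}(U/S)$; applying Lemma \ref{lem:1}(2) with this zero-section (whose base is all of $U$) to $g$ then yields the smoothness of $g$. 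The converse direction is immediate from Lemma \ref{lem:1}(1).

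For part (2), with $C=\bT^*(X/S)$, my plan is first to note that the transversality of $h$ amounts exactly to $dh_{\bar u}\colon \bT^*_{h(\bar u)}(X/S)\to \bT^*_{\bar u}(U/S)$ being injective at every geometric point $\bar u$, so that $h^\circ C$ coincides with the sub-bundle $\mathrm{im}(dh)\subseteq \bT^*(U/S)$. The condition on $g$ then becomes $dg_{\bar u}(\nu)\notin \mathrm{im}(dh_{\bar u})$ for every non-zero $\nu\in\bT^*_{g(\bar u)}(Y/S)$. Because $0$ always lies in $\mathrm{im}(dh_{\bar u})$, this forces both the injectivity of $dg_{\bar u}$ and the trivial intersection $\mathrm{im}(dh_{\bar u})\cap \mathrm{im}(dg_{\bar u})=\{0\}$, which together are equivalent to the injectivity of the direct sum
\[
dh_{\bar u}\oplus dg_{\bar u}\colon \bT^*_{h(\bar u)}(X/S)\oplus \bT^*_{g(\bar u)}(Y/S)\longrightarrow \bT^*_{\bar u}(U/S).
\]

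To conclude, I will identify this sum with $d(h\times g)_{\bar u}$ via the canonical splitting $\Omega^1_{(X\times_S Y)/S}=\mathrm{pr}_1^*\Omega^1_{X/S}\oplus \mathrm{pr}_2^*\Omega^1_{Y/S}$; fibrewise injectivity of $d(h\times g)$ at every geometric point of $U$ is then equivalent, by the Jacobian criterion \cite[2.5.7]{Fu15} invoked in Lemma \ref{lem:1}(2), to smoothness of $h\times g\colon U\to X\times_S Y$. The converse direction simply reverses this reasoning: smoothness of $h\times g$ yields injectivity of the sum, hence of each summand and of the intersection condition. I expect no substantive obstacle beyond careful bookkeeping, keeping the source-version (for $h\colon U\to X$) and target-version (for $g\colon U\to Y$) of $C$-transversality from \S\ref{sub:t} cleanly separated throughout.
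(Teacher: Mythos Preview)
Your argument is correct and matches the paper's proof essentially line for line: both parts reduce to the pointwise injectivity of the relevant differentials and then invoke the Jacobian criterion \cite[2.5.7]{Fu15} (either directly or via Lemma~\ref{lem:1}(2)), and in part~(2) both identify $d(h\times g)$ with $dh\oplus dg$ through the splitting $\Omega^1_{(X\times_SY)/S}\cong\mathrm{pr}_1^*\Omega^1_{X/S}\oplus\mathrm{pr}_2^*\Omega^1_{Y/S}$. One small slip: for the converse in part~(1) you cite Lemma~\ref{lem:1}(1), but that lemma concerns the source-version of transversality, whereas here you need the (equally trivial) target-version statement that a smooth $g$ has injective $dg_{\bar u}$ and hence is transversal to the zero-section.
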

\begin{proof}
(1) When $h:U\rightarrow X$ is $C$-transversal relative to $S$, the pull-back $h^\circ C$ is the zero-section of $\mathbb T^*(U/S)$. By definition, $g:U\rightarrow Y$ is $h^{\circ}C$-transversal relative to $S$ if and only if, for any geometric point $\bar u\rightarrow U$, the canonical map $dg_{\bar u}:\mathbb T^*_{g(\bar u)}(Y/S)\rightarrow \mathbb T^*_{\bar u}(U/S)$ is injective.
It is equivalent to that the canonical morphism of $\mathcal O_X$-modules $\mathcal O_U$-modules $g^*\Omega^1_{Y/S}\rightarrow \Omega^1_{U/S}$ is injective. It equals to that $g:U\rightarrow Y$ is smooth \cite[2.5.7]{Fu15}.

(2) If $h\times g\colon U\rightarrow X\times_S Y$ is smooth, then $h$ and $g$ are also smooth. By Lemma \ref{lem:1}(1), the morphism $h$ is $C$-transversal relative to $S$ and $h^\circ C=\mathbb T^\ast(X/S)\times_X U$. Since $h\times g$ is smooth, the canonical morphism
\begin{equation}\label{eq:lem:2:1}
d(h\times g):\ct((X\times_SY)/S)\times_{(X\times_SY)}U\rightarrow \ct(U/S)
\end{equation}
is injective. Moreover, the canonical projections $\rpr_1:X\times_SY\rightarrow X$ and $\rpr_2:Y\times_SX\rightarrow Y$ induces two injections
\begin{align*}
d\mathrm{pr}_1:\mathbb T^*(X/S)\times_X(X\times_SY)\rightarrow \mathbb T^*((X\times_SY)/S),\\
d\mathrm{pr}_2:\mathbb T^*(Y/S)\times_Y(X\times_SY)\rightarrow \mathbb T^*((X\times_SY)/S).
\end{align*}
The intersection of the images of $d\mathrm{pr}_1$ and $d\mathrm{pr}_2$ is contained in the zero-section of $T^*(X\times_SY/S)$.
Combining \eqref{eq:lem:2:1}, it implies that the intersection of  the images of $dh:\mathbb T^*(X/S)\times_XU\rightarrow  \mathbb T^*(U/S)$ and $dg:\mathbb T^*(Y/S)\times_XU\rightarrow  \mathbb T^*(U/S)$ is contained in the zero section of $\mathbb T^*(T/S)$.
Hence $g$ is $h^\circ C$-transversal relative to $S$, i.e., the test pair $(g,h)$ is $C$-transversal relative to $S$.

Conversely, we assume that $(g,h)$ is $C$-transversal relative to $S$. It implies that $dh:\mathbb T^*(X/S)\times_XU\rightarrow  \mathbb T^*(U/S)$ is injective and that $g:U\rightarrow Y$ is $h^\circ C$-transversal relative to $S$.  The map $dh$ is injective implies that $h$ is smooth.
By Lemma \ref{lem:1}(2), $g$ is also smooth since the base $B(h^\circ C)$ is equal to $U$. Notice that $h^\circ C=\mathrm{im}(dh)$. Hence, $g:U\rightarrow Y$ is $h^\circ C$-transversal relative to $S$ implies that the intersection of $\mathrm{im}(dh)$ and $\mathrm{im}(dg)$ in $\bT^*(U/S)$ is $\bT^*_U(U/S)$. It implies that the canonical map
\begin{equation*}
d(h\times g):\mathbb T^*(X\times_SY/X)\times_{(X\times_SY)}U\rightarrow \mathbb T^*(U/S)
\end{equation*}
is injective, Hence, $h\times g:U\rightarrow X\times_SY $ is smooth.
\end{proof}

\begin{proposition}\label{lem:nonempty}
If $f\colon X\rightarrow S$ is universally locally acyclic with respect to $\mathcal F$, then $\mathbb T^\ast (X/S)$ is an element of $\mathcal C(\mathcal F, X/S)$.
\end{proposition}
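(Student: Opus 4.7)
The plan is to unwind the definitions and then reduce to the standard stability properties of universal local acyclicity. Let $(g,h) : Y \leftarrow U \rightarrow X$ be a test pair relative to $S$ that is $\mathbb{T}^\ast(X/S)$-transversal relative to $S$. By the definition of micro-support, I must show that $g : U \rightarrow Y$ is locally acyclic with respect to $h^\ast \mathcal{F}$. The key preliminary step is to invoke Lemma \ref{lem:2}(2): the $\mathbb{T}^\ast(X/S)$-transversality of the test pair is equivalent to the combined morphism
\[
h \times g : U \longrightarrow X \times_S Y
\]
being smooth. This is the only structural input from the transversality hypothesis that I really need.

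Next, I would factor $g$ as $g = \mathrm{pr}_2 \circ (h \times g)$, where $\mathrm{pr}_2 : X \times_S Y \rightarrow Y$ is the second projection. Universal local acyclicity of $f : X \rightarrow S$ with respect to $\mathcal{F}$ is preserved under arbitrary base change, so the base change $\mathrm{pr}_2 : X \times_S Y \rightarrow Y$ along $Y \rightarrow S$ is locally acyclic with respect to $\mathrm{pr}_1^\ast \mathcal{F}$, where $\mathrm{pr}_1 : X \times_S Y \rightarrow X$ is the first projection. Since the smooth morphism $h \times g$ is automatically (universally) locally acyclic with respect to every sheaf, and composition of locally acyclic morphisms is locally acyclic with respect to the appropriate pullback, the composite $g$ is locally acyclic with respect to
\[
(h \times g)^\ast \mathrm{pr}_1^\ast \mathcal{F} \; = \; h^\ast \mathcal{F},
\]
which is exactly what is required.

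Thus the entire argument reduces to quoting (i) Lemma \ref{lem:2}(2) to translate transversality into smoothness of $h \times g$, (ii) stability of universal local acyclicity under base change, and (iii) the fact that smooth morphisms preserve local acyclicity under composition. No real obstacle is expected here, because everything has been set up so that $\mathbb{T}^\ast(X/S)$-transversality corresponds to the most permissive possible geometric condition (smoothness of $h \times g$) and universal local acyclicity is designed precisely to pass through such situations. The only point to be a little careful about is writing $g$ as $\mathrm{pr}_2 \circ (h \times g)$ rather than as some pullback of $f$, and then applying the smooth-composition stability correctly; neither should cause difficulty.
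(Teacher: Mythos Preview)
Your proposal is correct and follows essentially the same approach as the paper: factor $g$ through $h\times g$ and $\mathrm{pr}_2$, use Lemma~\ref{lem:2}(2) to get smoothness of $h\times g$, use base change to transport universal local acyclicity from $f$ to $\mathrm{pr}_2$, and then conclude by stability of local acyclicity under composition with a smooth morphism. The paper phrases the last step as an application of the smooth base change theorem rather than as composition of locally acyclic morphisms, but the content is identical.
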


\begin{proof}
Let $(g,h): Y\leftarrow U\rightarrow X$ be a test pair of $X$ relative to $S$ which is $\ct(X/S)$-transversal relative to $S$. We need to show that it is $\mathcal F$-acyclic. Consider the following diagram with Cartesian squares
\begin{equation*}
\xymatrix{
U\ar[dr]_g\ar[r]^-(0.5){h\times g}&X\times_S Y\ar@{}[rd]|-(0.5){\Box}\ar[d]_{\mathrm{pr}_2}\ar[r]^-(0.5){\mathrm{pr}_1}&X\ar[d]^f\\
&Y\ar[r]&S
}
\end{equation*}
Since $f:X\rightarrow S$ is universally locally acyclic with respect to $\mathcal F$, the second projection $\mathrm{pr}_2$ is also universally locally acyclic with respect to $\mathrm{pr}_1^\ast\mathcal F$.  By Lemma \ref{lem:2}(2), the pair $(g,h)$ is $\mathbb T^*(X/S)$-transversal implies that the morphism $h\times g$ is smooth. By the smooth base change theorem, the composition $g=\mathrm{pr}_2 \circ  (h\times g) $ is universally locally acyclic relatively to $h^\ast\mathcal F$, i.e., the pair $(g,h)$ is $\mathcal F$-acyclic.
\end{proof}

\begin{lemma}\label{lem:open}
Let $\{V_i\}_{i\in I}$ be a Zariski open covering of $X$.  We assume that $\mathcal C(\mathcal F, X/S)$ is non-empty.
An element $C$ of $\mathcal C(\mathcal F, X/S)$ is minimal if and only if, for any $i\in I$, $C_{V_i}=C\times_SV_i$ is a minimal element of $\mathcal C(\mathcal F, V_i/S)$.
\end{lemma}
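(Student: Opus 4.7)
The plan is to first prove a locality principle for the micro-support condition itself and then deduce both implications of the lemma from it. The locality principle reads: for any Zariski open cover $\{V_i\}_{i\in I}$ of $X$ and any closed conical subset $C'\subseteq\bT^*(X/S)$, the sheaf $\cF$ is micro-supported on $C'$ relative to $S$ if and only if each $\cF|_{V_i}$ is micro-supported on $C'|_{V_i}:=C'\cap\bT^*(V_i/S)$ relative to $S$. The direction $(\Rightarrow)$ is tautological, since a test pair of $V_i$ becomes a test pair of $X$ after composing with $V_i\hookrightarrow X$ and transversality is a fiberwise condition. For $(\Leftarrow)$, given a $C'$-transversal test pair $(g,h)\colon Y\leftarrow U\to X$, the restrictions $(g|_{U_i},h|_{U_i})$ to $U_i:=h^{-1}(V_i)$ are $C'|_{V_i}$-transversal test pairs of $V_i$, hence $\cF|_{V_i}$-acyclic by hypothesis; because local acyclicity is local on the source and $\{U_i\}$ covers $U$, $g$ is locally acyclic with respect to $h^*\cF$.

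Granted this, the direction $(\Leftarrow)$ of the lemma is immediate by contrapositive: any $\tilde C\subsetneq C$ in $\cC(\cF,X/S)$ must differ from $C$ on some $V_i$, and the locality principle gives $\tilde C|_{V_i}\in\cC(\cF|_{V_i},V_i/S)$, contradicting minimality of $C|_{V_i}$. For $(\Rightarrow)$ I also argue by contrapositive. Assume $C|_{V_{i_0}}$ is not minimal, with witness $C'\subsetneq C|_{V_{i_0}}$, and construct a witness $\tilde C\subsetneq C$ for non-minimality of $C$, namely
\begin{equation*}
\tilde C:=C'\cup\bigl(C\cap\pi^{-1}(X\setminus V_{i_0})\bigr),
\end{equation*}
where $\pi\colon\bT^*(X/S)\to X$ is the bundle projection. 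A short calculation gives $\bT^*(X/S)\setminus\tilde C=(\bT^*(X/S)\setminus C)\cup(\pi^{-1}(V_{i_0})\setminus C')$, a union of two open sets, so $\tilde C$ is closed; it is conical and strictly contained in $C$ because $\tilde C|_{V_{i_0}}=C'$.

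The main obstacle is verifying $\tilde C\in\cC(\cF,X/S)$. By the locality principle it suffices to show $\tilde C|_{V_j}\in\cC(\cF|_{V_j},V_j/S)$ for every $j\in I$; the case $j=i_0$ is immediate. For $j\neq i_0$ I would take a $\tilde C|_{V_j}$-transversal test pair $(g,h)\colon Y\leftarrow U\to V_j$ and establish local acyclicity of $g$ at each geometric point $\bar u$ of $U$ by cases on $h(\bar u)$. If $h(\bar u)\in V_{i_0}$, then on the open piece $U_1:=h^{-1}(V_{i_0})$ the fibers of $\tilde C|_{V_j}$ coincide with those of $C'|_{V_j\cap V_{i_0}}$, so $(g|_{U_1},h|_{U_1})$ is a $C'|_{V_j\cap V_{i_0}}$-transversal test pair of $V_j\cap V_{i_0}$, which is $\cF|_{V_j\cap V_{i_0}}$-acyclic by applying the locality principle once more to the hypothesis on $C'$. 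If $h(\bar u)\notin V_{i_0}$, then the fibers of $\tilde C|_{V_j}$ and $C|_{V_j}$ agree at $h(\bar u)$, so the pair is $C|_{V_j}$-transversal at $\bar u$; the open-extension property of transversality recalled in \ref{sub:t} then yields a Zariski open neighborhood of $\bar u$ on which the pair is $C|_{V_j}$-transversal, and the assumption that $\cF|_{V_j}$ is micro-supported on $C|_{V_j}$ gives local acyclicity there. Combining the two cases shows $g$ is locally acyclic on all of $U$, so $\tilde C|_{V_j}\in\cC(\cF|_{V_j},V_j/S)$, producing the desired contradiction with minimality of $C$.
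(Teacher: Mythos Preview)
Your proof is correct and follows essentially the same strategy as the paper: both hinge on the construction $\tilde C = C' \cup \bigl(C \cap \pi^{-1}(X \setminus V_{i_0})\bigr)$, which coincides with the paper's $\overline{C'} \cup \bigl(C \times_X (X-V)\bigr)$, together with the claim that $\mathcal F$ is micro-supported on it. Two minor differences are worth noting. First, the paper simply asserts this micro-support claim without argument, whereas you supply a detailed verification; your route through each $V_j$ via the locality principle works but is more elaborate than necessary, since one can argue directly on $X$ by splitting any $\tilde C$-transversal test pair $(g,h)$ into the open piece $h^{-1}(V_{i_0})$ (where $\tilde C$ restricts to $C'$) and a neighborhood of each point outside (where $\tilde C$ and $C$ have the same fiber, so the openness property from \ref{sub:t} applies). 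Second, for the ``if'' direction the paper proceeds more economically by invoking the ``only if'' part already established: pick any minimal $C'' \subseteq C$ in $\mathcal C(\mathcal F, X/S)$, apply ``only if'' to conclude each $C''_{V_i}$ is minimal, hence $C''_{V_i} = C_{V_i}$ for all $i$, and therefore $C'' = C$.
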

\begin{proof}
We firstly prove the "only if" part. Let $C\in \mathcal C^{\mathrm{min}}(\mathcal F, X/S)$ and $V$ an open subscheme of $X$. It is easy to see that $C_V\in \mathcal C(\mathcal F|_V, V/S)$. Let $C^\prime$ be a minimal element of $\mathcal C(\mathcal F|_V, V/S)$ contained in $C_V$. Let $ \overline{C^\prime}$ be the closure of $C^\prime$ in $\ct(X/S)$. Then $\mathcal F$ is micro-supported on $(C\times_X(X-V))\bigcup \overline{C^\prime}$ relative to $S$.
Since $(C\times_X(X-V))\bigcup \overline{C^\prime}\subseteq C$, we have $C= (C\times_X(X-V))\bigcup \overline{C^\prime}$ since  $C$ is minimal. Hence $C_V= C^\prime$ is also minimal. For the "if" part, we suppose $C''$ is a minimal element of $\mathcal C(\mathcal F,X/S)$ contained in $C$. By the "only if" part, for any $i\in I$, the restriction $C''_{V_i}=C''\times_SV_i$ is a minimal element of $\mathcal C(\mathcal F|_{V_i}, V_i/S)$. Since $C_{V_i}$ is also minimal, we have $C''_{V_i}=C_{V_i}$, i.e., $C''=C$.

\end{proof}

\begin{lemma}[{cf. \cite[Lemma 2.1]{Beilinson15}}]\label{bei2.1}
\ \
\begin{itemize}
\item[(i)]We assume that $\mathcal C(\mathcal F,X/S)$ is nonempty. Then the base of $SS^w(\mathcal F, X/S)$ equals the support of $\mathcal F$. Let $C'$ be an element of $\mathcal C^{\mathrm{min}}(\mathcal F, X/S)$. Then the base of $C'$ equals the support of $\mathcal F$.
\item[(ii)]Assume that $\mathcal F$ is micro-supported on $C$ relative to $S$. Then, for every test pair $(g,h):Y\leftarrow U\rightarrow X$ relative to $S$ which is $C$-transversal relative to $S$, the map $g$ is universally locally acyclic with respect to $h^\ast\mathcal F$.
\item[(iii)]
The complex $\mathcal F$ is micro-supported on the zero section of $\mathbb T^\ast (X/S)$ if and only if each cohomology of $\mathcal F$ is locally constant on $X$.
\item[(iv)]
Let $\cF'$ and $\cF''$ be objects of $D^b_c(X,\Lambda)$ and $\cF'\rightarrow \cF\rightarrow \cF'\rightarrow$ a distinguished triangle. If $\cF'$ (resp. $\cF''$) is micro-supported on  $C'\subseteq \bT^*(X/S)$ (resp. $C''$) relative to $S$, then $\cF$ is micro-supported on $C'\cup C''$ relative to $S$.
\end{itemize}
\end{lemma}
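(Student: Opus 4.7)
The plan is to adapt Beilinson's proof of \cite[Lemma 2.1]{Beilinson15} to the relative setting, treating the assertions in the order (iii), (iv), (ii), (i): (iii) and (iv) are formal, (ii) is a base-change refinement of the definition of micro-support, and (i) requires one new ingredient, a specific weak test pair built from the zero section of $\mathbb A^1_S$.

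For (iii), the ``only if'' direction follows by applying the micro-support hypothesis to the test pair $(\mathrm{id}_X,\mathrm{id}_X): X\leftarrow X\to X$, which is $\mathbb T^\ast_X(X/S)$-transversal relative to $S$ by Lemma \ref{lem:2}(1); this yields $\mathrm{id}_X$ locally acyclic relative to $\mathcal F$, equivalent to $\mathcal F$ having locally constant cohomology. Conversely, if $\mathcal F$ is locally constant, any $\mathbb T^\ast_X(X/S)$-transversal test pair $(g,h)$ has $g$ smooth by Lemma \ref{lem:2}(1), and smooth base change shows $g$ is locally acyclic relative to the locally constant sheaf $h^\ast\mathcal F$. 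For (iv), a test pair is $(C'\cup C'')$-transversal if and only if it is both $C'$-transversal and $C''$-transversal, so $g$ is locally acyclic relative to both $h^\ast\mathcal F'$ and $h^\ast\mathcal F''$; the third vertex $h^\ast\mathcal F$ of the pulled-back distinguished triangle then inherits local acyclicity from the triangulated property.

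For (ii), given a $C$-transversal test pair $(g,h): Y\leftarrow U\to X$ and any smooth morphism $Y'\to Y$ with $Y'$ smooth over $S$, I would verify that the base-changed pair $(g', h\circ\mathrm{pr}_1): Y'\leftarrow U\times_Y Y'\to X$ is again a $C$-transversal test pair. The composition $h\circ\mathrm{pr}_1$ is $C$-transversal because $\mathrm{pr}_1: U\times_Y Y'\to U$ is smooth, hence $h^\circ C$-transversal by Lemma \ref{lem:1}(1); and the splitting $\mathbb T^\ast(Y'/S)\cong\mathbb T^\ast(Y/S)\times_Y Y'\oplus\mathbb T^\ast(Y'/Y)$ arising from the smoothness of $Y'\to Y$ reduces the transversality of $g'$ to the first summand (where it follows from the original transversality of $g$), since vectors on the second summand map into $\mathbb T^\ast((U\times_Y Y')/U)$, disjoint from $(h^\circ C)\times_U (U\times_Y Y')$. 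Micro-support applied to this base-changed pair gives $g'$ locally acyclic relative to the pullback of $h^\ast\mathcal F$ for every such $Y'$, which is the desired universal local acyclicity.

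For (i), both assertions follow from the same two inclusions; I illustrate with $SS^w(\mathcal F,X/S)$. For $\supp\mathcal F\subseteq B(SS^w)$, let $V=X\setminus B(SS^w)$ and consider the weak test pair $(g_0,j):\mathbb A^1_S\leftarrow V\to X$, where $j$ is the open immersion and $g_0$ is the composite $V\xrightarrow{f|_V} S\xrightarrow{0}\mathbb A^1_S$ via the zero section. Since $SS^w\cap \mathbb T^\ast(V/S)=\emptyset$, the pair is vacuously $SS^w$-transversal, so weak micro-support forces $g_0$ to be locally acyclic relative to $\mathcal F|_V$. For any geometric point $\bar v\to V$ and any geometric point $\bar t$ of the strict Hensel $(\mathbb A^1_S)^{(\overline{g_0(v)})}$ whose image has nonzero $\mathbb A^1$-coordinate, the fiber $V^{(\bar v)}\times_{(\mathbb A^1_S)^{(\overline{g_0(v)})}}\bar t$ is empty because $g_0$ factors through the zero section; local acyclicity then forces $\mathcal F_{\bar v}\cong R\Gamma(\emptyset,\mathcal F|_V)=0$, hence $\mathcal F|_V=0$ and $\supp\mathcal F\subseteq X\setminus V=B(SS^w)$. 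For the reverse inclusion $B(SS^w)\subseteq\supp\mathcal F$, set $U=X\setminus\supp\mathcal F$: a gluing argument on weak test pairs, splitting the source over $U$ (where $h^\ast\mathcal F=0$, so local acyclicity is trivial) and over $X\setminus U$ (where transversality to $SS^w\cap\mathbb T^\ast((X\setminus U)/S)$ agrees with transversality to $SS^w$), shows that $\mathcal F$ is weakly micro-supported on $SS^w\cap\mathbb T^\ast((X\setminus U)/S)$. Minimality of $SS^w$ then forces $SS^w\cap\mathbb T^\ast(U/S)=\emptyset$, i.e., $B(SS^w)\subseteq\supp\mathcal F$. The statement for a minimal element $C'\in\mathcal C(\mathcal F,X/S)$ is proved identically, using that micro-support implies weak micro-support for the first inclusion and the minimality of $C'$ in $\mathcal C$ for the second. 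The main obstacle is this first inclusion in (i); the weak test pair $V\xrightarrow{f|_V} S\xrightarrow{0}\mathbb A^1_S$ is the relative analogue of Beilinson's point-test in the absolute case, and it is the key new ingredient of the proof.
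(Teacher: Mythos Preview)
Your proof is correct and follows essentially the same strategy as the paper, including the key weak test pair $V\to S\xrightarrow{0}\mathbb A^1_S$ in (i). One omission in (ii): universal local acyclicity requires checking \emph{all} base changes $Y'\to Y$, not only smooth ones; the reduction to the smooth case is a standard d\'evissage (locally embed $Y'$ as a closed subscheme of some $\mathbb A^n_Y$ and use that local acyclicity restricts along closed immersions of the base), which the paper invokes explicitly. With that in place your argument for (ii) is actually slightly cleaner than the paper's: you verify directly that $h\circ\mathrm{pr}_1$ is $C$-transversal via injectivity of $d\mathrm{pr}_1$, whereas the paper detours through (i) to arrange smoothness first. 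Also note that the ``splitting'' $\mathbb T^\ast(Y'/S)\cong\mathbb T^\ast(Y/S)\times_Y Y'\oplus\mathbb T^\ast(Y'/Y)$ you invoke is only a short exact sequence globally; the argument still goes through because transversality is checked fiberwise, where the sequence of vector spaces does split. Your gluing argument for $B(SS^w)\subseteq\mathrm{supp}\,\mathcal F$ in (i) is the hands-on form of the paper's appeal to Lemma~\ref{lem:open}.
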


\begin{proof}
 (i) Let $Z$ be the support of $\mathcal F$, $B$ the base of $SS^w(\mathcal F, X/S)$ and $B'$ the base of $C'$. We have $SS^w(\mathcal F, X/S)\subseteq C'$ and $B\subseteq B'$.
   By Lemma \ref{lem:open}, we have $C'\times_X(X-Z)=\emptyset$ since $\mathcal F|_{X-Z}=0$. Hence $B\subseteq B'\subseteq Z$. We need to show that $Z\subseteq B$.
   Replacing $X$ by $X-B$, we are reduced to show that $SS^\omega(\mathcal F, X/S)$ is non-empty if $\mathcal F\neq 0$. Consider the test pair $(r,\mathrm{id}):\mathbb A^1_S\xleftarrow{r} X\xrightarrow{\mathrm{id}} X$, where $r:X\rightarrow \mathbb A^1_S$ is a composition of $f:X\rightarrow S$ and the zero-section $i:S\rightarrow \mathbb A^1_S$. It is not $\mathcal F$-acyclic if $\mathcal F\neq 0$.

(ii) We need to prove that, after any base change $r\colon Y'\rightarrow Y$, the morphism $g':U'=U\times_SS'\rightarrow Y'$ is locally acyclic with respect to $h'^*\mathcal F$, where $h':U'\rightarrow X$ denotes the composition of the canonical morphism $r':U'\rightarrow U$ and $h:U\rightarrow X$. By d\'evissage, we only need to treat the case where $r\colon Y'\rightarrow Y$ is smooth.  We can replace $U$ by a Zariski neighborhood of the support of $h^*\mathcal F$.  By (i), the support of $\mathcal F$ is contained in the base of $C$. Hence, we may assume that $h$ is smooth by Lemma \ref{lem:1}. Then, the composition of the canonical maps $r':U'\rightarrow U$ and $h:U\rightarrow X$ is smooth, which implies that $h'=h\circ r'$ is $C$-transversal relative to $S$ (Lemma \ref{lem:1}). Since $r:Y'\rightarrow Y$ is smooth, the morphism $g:U\rightarrow Y$ is $h^{\circ}C$-transversal relative to $S$ implies that $g:U'\rightarrow Y'$ is $h'^{\circ}C$-transversal relative to $S$. Hence, $(g',h')$ is also a test pair of $X$ which is $C$-transversal relative to $S$. We deduce that $g'$ is locally acyclic with respect to $h'^{*}\mathcal F$.

(iii) If $\mathcal F$ is micro-supported on $\mathbb T_X^*(X/S)$ relative to $S$, then the test pair $({\mathrm{id}},{\mathrm{id}}):X\leftarrow X\rightarrow X$ is $\mathbb T_X^*(X/S)$-transversal. Hence $\mathrm{id}\colon X\rightarrow X$ is locally acyclic with respect to $\mathcal F$. This implies that each cohomology of $\mathcal F$ is a locally constant sheaf on $X$.

Conversely, If each cohomology of $\mathcal F$ is a locally constant sheaf on $X$, then any test pair $(g,h): Y\leftarrow U\rightarrow X$, where $g$ is smooth, is $\mathcal F$-acyclic. By Lemma \ref{lem:2}, the sheaf $\mathcal F$ is micro-supported on $\mathbb T_X^*(X/S)$ relative to $S$.

(iv) If a test pair $(g,h):Y\leftarrow U\rightarrow X$ is $(C'\cup C'')$-transversal relative to $S$, then $(g,h)$ is $\cF'$-acyclic (resp. $\cF''$-acyclic) relative to $S$, i.e., the morphism $g:U\rightarrow Y$ is  locally acyclic with respect to $h^*\cF'$ (resp. $h^*\cF''$). Since $\cF'\rightarrow \cF\rightarrow \cF'\rightarrow$ is a distinguished triangle, the morphism $g:U\rightarrow Y$ is also locally acyclic with respect to $h^*\cF$, i.e., the pair $(g,h)$ is $\cF$-acyclic relative to $S$.
\end{proof}


\begin{lemma}[{cf. \cite[Lemma 2.2]{Beilinson15}}]\label{bei2.2}
We assume that $C$ is an element in $\mathcal C(\mathcal F, X/S)$.
\begin{itemize}
\item[(1)]
Let $Y$ be an $S$-scheme smooth and of finite type over $S$ and $h:Y\rightarrow X$ an $S$-morphism. If $h$ is $C$-transversal relative to $S$, then, $h^{\circ}C$ is contained in $\mathcal C(h^*\mathcal F, Y/S)$.
\item[(2)]
Let $Y$ be an $S$-scheme smooth and of finite type over $S$ and $h:Y\rightarrow X$ an smooth and surjective $S$-morphism. If $h^{\circ}C$ is contained in $\mathcal C^{\mathrm {min}}(h^*\mathcal F, Y/S)$, then $C$ is contained in $\mathcal C^{\mathrm {min}}(\cF, X/S)$.
\item[(3)] Let $Z$ be an $S$-scheme smooth and of finite type over $S$ and $g:X\rightarrow Z$ an $S$-morphism. If the base $B(C)$ is proper over $Z$, then $g_{\circ}C$ is contained in $\mathcal C(\mathrm R g_*\mathcal F, Z/S)$.
\end{itemize}
\end{lemma}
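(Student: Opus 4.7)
For part (1), my plan is to reduce any $h^\circ C$-transversal test pair $(g',h') \colon Y' \leftarrow U' \to Y$ of $Y$ relative to $S$ to a $C$-transversal test pair of $X$ by composition. Setting $\tilde h = h \circ h'$, the chain rule $d\tilde h = dh' \circ dh$ combined with the $C$-transversality of $h$ and the $h^\circ C$-transversality of $h'$ would give that $\tilde h$ is $C$-transversal relative to $S$ and $\tilde h^\circ C = h'^\circ(h^\circ C)$. Hence $(g', \tilde h) \colon Y' \leftarrow U' \to X$ is a $C$-transversal test pair of $X$ relative to $S$, and $C \in \mathcal C(\mathcal F, X/S)$ delivers local acyclicity of $g'$ with respect to $\tilde h^*\mathcal F = h'^*h^*\mathcal F$, proving (1). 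Part (2) is then immediate: for any $C' \in \mathcal C(\mathcal F, X/S)$ with $C' \subseteq C$, smoothness of $h$ together with Lemma \ref{lem:1}(1) give $h^\circ C' = C' \times_X Y$ and $h^\circ C = C \times_X Y$, while part (1) places $h^\circ C'$ in $\mathcal C(h^*\mathcal F, Y/S)$; minimality of $h^\circ C$ then forces $C' \times_X Y = C \times_X Y$, and surjectivity of $h$ yields $C = C'$.

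For part (3), my approach is to combine proper base change with part (1). Given a $g_\circ C$-transversal test pair $(g', h') \colon Y' \leftarrow U' \to Z$ of $Z$ relative to $S$, I would form the Cartesian square
\begin{equation*}
\xymatrix{
W \ar[r]^{p_1} \ar[d]_{p_2} \ar@{}[rd]|-{\Box} & X \ar[d]^g \\
U' \ar[r]_{h'} & Z.
}
\end{equation*}
By Lemma \ref{bei2.1}(i) the support of $\mathcal F$ is contained in $B(C)$, which is proper over $Z$; thus $g$ is proper on $\supp\mathcal F$ and proper base change furnishes an isomorphism $h'^* \mathrm Rg_*\mathcal F \simeq \mathrm R(p_2)_* p_1^*\mathcal F$. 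The key step is to verify that $(g' \circ p_2, p_1) \colon Y' \leftarrow W \to X$ is a $C$-transversal test pair of $X$ relative to $S$; this reduces, via the Cartesian structure of the square, to the $g_\circ C$-transversality of $h'$ (for the $C$-transversality of $p_1$) and the $h'^\circ(g_\circ C)$-transversality of $g'$ (for the $p_1^\circ C$-transversality of $g' \circ p_2$). Applying $C \in \mathcal C(\mathcal F, X/S)$ then yields local acyclicity of $g' \circ p_2$ with respect to $p_1^*\mathcal F$, and a standard preservation-of-local-acyclicity result under proper direct image (applicable since $p_2$ is proper on $\supp p_1^*\mathcal F \subseteq p_1^{-1}B(C)$) promotes this to local acyclicity of $g'$ with respect to $\mathrm R(p_2)_* p_1^*\mathcal F$, completing (3).

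The main technical hurdle will be the cluster of transversality verifications in part (3), in particular ensuring that $W$ is smooth over $S$ on the neighborhood relevant for the local-acyclicity argument. The $g_\circ C$-transversality of $h'$ together with Lemma \ref{lem:1}(2) should give smoothness of $h'$ on a Zariski neighborhood of $B(g_\circ C)$, which contains $h'^{-1}(\supp \mathrm Rg_*\mathcal F)$; then $p_1$ inherits smoothness on the corresponding open of $W$, and this is enough to run the argument above after a harmless shrinking of $U'$.
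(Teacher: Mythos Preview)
Your proof follows the paper's strategy in all three parts: compose test pairs for (1), use smoothness plus surjectivity of $h$ for (2), and combine the fiber product with proper base change for (3).

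One caution on part (3): Lemma~\ref{lem:1}(2) does not apply to give smoothness of $h'$, since that lemma concerns a map \emph{out of} the scheme carrying the conical set, whereas $h'\colon U'\to Z$ is $g_\circ C$-transversal in the other (pullback) sense, and such a map need not be smooth anywhere. The paper makes the analogous assertion (that its $s\colon V\to Z$ is smooth over $Z$ near $s^{-1}(B(g_\circ C))$) with the same lack of justification, so you are in good company, but the point deserves care. What actually holds, and suffices, is that the fiber product $W=U'\times_ZX$ is smooth over $S$ at every point $(v,x)$ with $x\in B(C)$: if $\nu\in\ker(dg_x)\subseteq\mathbb T^*_{g(x)}(Z/S)$ is nonzero then $dg_x(\nu)=0\in C_x$ (as $x\in B(C)$ forces $0\in C_x$), so $\nu\in(g_\circ C)_{h'(v)}$ and hence $dh'_v(\nu)\neq 0$ by transversality; thus $\ker(dh'_v)\cap\ker(dg_x)=0$, which is exactly the condition for $W\hookrightarrow U'\times_SX$ to be smooth over $S$ at $(v,x)$. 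With this in hand, your remaining transversality and proper--base--change steps go through unchanged on the smooth locus of $W$ over $S$, which contains $p_1^{-1}(\supp\mathcal F)$.
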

\begin{proof}
(1) Let $U$ be a $S$-scheme smooth and of finite type over $S$ and $r:U\rightarrow Y$ an $S$-morphism which is $h^{\circ}C$-transversal relative to $S$. Since $h:Y\rightarrow X$ is $C$-transveral relative to $S$, the composition $h\circ r:U\rightarrow X$ is $C$-transversal relative to $S$. Hence, a test pair $(g,r): Z\leftarrow U\rightarrow Y$ relative to $S$  is $h^{\circ}C$-transversal relative to $S$ implies that the pair $(g,h\circ r):Z\leftarrow U\rightarrow X$ is $C$-transversal relative to $S$. Since $C$ is an element of $\mathcal C(\mathcal F,X/S)$, the latter implies that $g$ is locally acyclic with respect to $ r^*(h^*\mathcal F)$, i.e., the pair $(g,r)$ is $\mathcal F$-acyclic. Hence, $h^{\circ}C$ is contained in $\mathcal C (F, X/S)$.

(2) Let $C'$ be an element in $\mathcal C^{\mathrm {min}}(\cF, X/S)$ that is contained in $C$. Then, by (1), $h^{\circ}C'$ is contained in $C(h^*\mathcal F, X/S)$. Since $h^{\circ}C$ is contained in $\mathcal C^{\mathrm {min}}(h^*\mathcal F, Y/S)$, the inclusion $h^{\circ}C'\subseteq h^{\circ}C$ is an identity. Since $h$ is smooth, we have $h^{\circ}C=C\times_XY$ and $h^{\circ}C'=C'\times_XY$ (Lemma \ref{lem:1}). Since $h$ is surjective, the equality $C\times_XY=C'\times_XY$ implies that $C=C'$.

(3) Let $(r,s):W\leftarrow V\rightarrow Z$ be a $g_{\circ}C$-transversal test pair relative to $S$. We put $X'=V\times_ZX$ and denote by $s':X'\rightarrow X$ and $g':X'\rightarrow V$ the canonical projections.  Since $s:V\rightarrow Z$ is $g_{\circ}C$-transversal relative to $S$, there exists a Zariski open neighborhood $U$ of $s^{-1}(B(g_\circ C))$ in $V$ which is smooth over $Z$. Hence $U'=U\times_ZX$ is a Zariski open neighborhood of $s'^{-1}(B(C))$ in $X'$ which is smooth over $X$. We have the following commutative diagram with Cartesian squares
\begin{equation*}
\xymatrix{\relax
U'\ar[d]_{g'|_{U'}}\ar[r]^{j'}\ar@{}|-(0.5){\Box}[rd]&X'\ar[r]^{s'}\ar[d]^{g'}\ar@{}|-(0.5){\Box}[rd]&X\ar[d]^g\\
U\ar[rd]_{r|_U}\ar[r]^j&V\ar[r]^s\ar[d]^r&Z\\
&W&}
\end{equation*}
By Lemma \ref{lem:1}(1), the restriction $s'|_{U'}:U'\rightarrow X$ is $C$-transversal relative to $S$. By definition, $r|_{U}:U\rightarrow W$ is $s^\circ (g_\circ C)$-transversal relative to $S$ implies that $r\circ g'|_{U'}:U'\rightarrow W$ is $s'^{\circ}C$-transversal relative to $S$. Hence, the test pair $(r\circ g'|_{U'},s'|_{U'}):W\leftarrow U'\rightarrow X$ is $C$-transversal relative to $S$. Since $C$ is an element of $\mathcal C(\cF,X/S)$, the map $r\circ g'|_{U'}$ is universally locally acyclic with respect to $(s'|_{U'})^*\mathcal F$ (Lemma \ref{bei2.1}(2)). Since $B(C)$ contains the support of $\mathcal F$ (Lemma \ref{bei2.1}(1)), the scheme $U'$ contains the support of $s'^*\mathcal F$ in $X'$. Therefore, $r\circ g'$ is universally locally acyclic with respect to $s'^*\mathcal F$. Since the support of $s'^*\mathcal F$, contained in $s'^{-1}(B(C))$ is also proper over $V$, the morphism $r:V\rightarrow W$ is locally acyclic with respect to $\mathrm Rg'_*(s'^*\mathcal F)$ by proper base change theorem. By the same theorem, we have $\mathrm Rg'_*(s'^*\mathcal F)\xrightarrow{\sim}s^*(\mathrm R g_*\mathcal F)$. Hence, the test pair $(r,s):W\leftarrow V\rightarrow Z$ is $\mathrm R g_*\mathcal F$-acyclic. In summary, each $g_{\circ}C$-transversal test pair relative to $S$ is $\mathrm R g_*\mathcal F$-acyclic, i.e., $\mathrm R g_*\mathcal F$ is micro-supported on $g_{\circ}C$ relative to $S$.

\end{proof}

\begin{proposition}[{cf. \cite[Lemma 2.5]{Beilinson15}}]\label{rssclosedimm}
Suppose that $S_{\mathrm{red}}$ is not the spectrum of a field. Let $P$ be an $S$-scheme smooth and of finite type over $S$ and $i:X\rightarrow P$ a closed immersion of $S$-schemes.  
\begin{itemize}
\item[(i)]
If $SS^w(i_*\mathcal F, P/S)$ and $SS^w(\mathcal F,X/S)$ exist, we have 
\begin{equation}\label{wsscloseimm}
SS^w(i_*\mathcal F,P/S)=i_{\circ}SS^w(\mathcal F,X/S).
\end{equation}
\item[(ii)]
If  $SS(i_*\mathcal F, P/S)$ exists, then $SS(\mathcal F,X/S)$ also exists. Moreover, after replacing $S$ by a Zariski open dense subscheme, we have
\begin{equation}\label{sscloseimm}
SS(i_*\mathcal F,P/S)=i_{\circ}SS(\mathcal F,X/S).
\end{equation}
\end{itemize}
\end{proposition}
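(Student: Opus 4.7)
The strategy is to adapt the proof of Beilinson's Lemma 2.5 \cite{Beilinson15} to the relative setting, establishing parts (i) and (ii) in parallel through a common lifting argument. The inclusion $SS^w(i_*\mathcal F, P/S) \subseteq i_\circ SS^w(\mathcal F, X/S)$ (and the analogous statement for $SS$ in (ii)) follows by showing that $i_\circ SS^w(\mathcal F, X/S) \in \mathcal C^w(i_*\mathcal F, P/S)$. Given a $i_\circ SS^w(\mathcal F, X/S)$-transversal weak test pair $(r,s)\colon \mathbb A^1_S \leftarrow V \rightarrow P$, base change along $i$ yields a weak test pair $(r|_{V'}, s')\colon \mathbb A^1_S \leftarrow V' \rightarrow X$ with $V' = V\times_PX$; the open-immersion structure of $s$ (granted because $S_{\mathrm{red}}$ is not the spectrum of a field) is preserved by base change, and transversality transfers from the definitions. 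The weak micro-support of $\mathcal F$ on $SS^w(\mathcal F,X/S)$ gives local acyclicity of $r|_{V'}$ with respect to $s'^*\mathcal F$, and proper base change along the closed immersion $i$ transfers this to local acyclicity of $r$ with respect to $s^*(i_*\mathcal F)$. The full version in (ii) follows directly from Lemma \ref{bei2.2}(3), since $B(SS(\mathcal F,X/S))\subseteq X$ is automatically proper over $P$.

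The reverse inclusion is the main obstacle. Writing $C' = SS^w(i_*\mathcal F, P/S)$ (respectively $SS(i_*\mathcal F, P/S)$), I introduce $i^\circ C' = di(C' \times_P X) \subseteq \mathbb T^*(X/S)$, where $di\colon \mathbb T^*(P/S) \times_P X \rightarrow \mathbb T^*(X/S)$ is the canonical surjection, and claim that $\mathcal F$ is (weakly) micro-supported on $i^\circ C'$ relative to $S$. Given a $i^\circ C'$-transversal test pair $(g,h)\colon Y \leftarrow U \rightarrow X$, the plan is to lift it to a $C'$-transversal test pair of $P$ by exploiting that $X \hookrightarrow P$ is a regular immersion of smooth $S$-schemes. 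Working locally by Lemma \ref{lem:open}, pass to an \'etale neighborhood $V$ of a point of $X$ in $P$ admitting a smooth product decomposition $V \cong \mathbb A^d_S \times (V \cap X)$ with $d = \dim P - \dim X$ and $V \cap X$ corresponding to the zero section, and define $h'\colon \mathbb A^d_S \times U \rightarrow V$ by $(x,u) \mapsto (x, h(u))$ and $g'\colon \mathbb A^d_S \times U \rightarrow Y$ by $(x,u) \mapsto g(u)$. A direct calculation in the cotangent decomposition $\mathbb T^*_p(V/S) \cong \mathbb T^*_x(\mathbb A^d_S/S) \oplus \mathbb T^*_{h(\bar u)}(X/S)$ shows $(g',h')$ is $C'$-transversal whenever $(g,h)$ is $i^\circ C'$-transversal; moreover $h'$ is an open immersion when $h$ is, so weak test pairs lift to weak test pairs. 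The (weak) micro-support of $i_*\mathcal F$ on $C'$ then gives local acyclicity of $g'$ with respect to $h'^*(i_*\mathcal F)$, which is supported on $\{0\} \times U$, and proper base change along the closed immersion $\{0\} \times U \hookrightarrow \mathbb A^d_S \times U$ transfers this to local acyclicity of $g$ with respect to $h^*\mathcal F$.

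Having established $i^\circ C' \in \mathcal C(\mathcal F, X/S)$ (resp.\ $\mathcal C^w(\mathcal F, X/S)$), part (ii) gains non-emptiness of $\mathcal C(\mathcal F, X/S)$, so after shrinking $S$ via Theorem \ref{existrss} the relative singular support $SS(\mathcal F, X/S)$ exists; in both parts one then has $SS^w(\mathcal F, X/S) \subseteq i^\circ C'$ (resp.\ $SS(\mathcal F, X/S) \subseteq i^\circ C'$), giving $i_\circ SS^w(\mathcal F, X/S) \subseteq i_\circ i^\circ C'$. To close the argument I would prove an auxiliary lemma asserting that $C'$ contains the conormal bundle $N^*_{X/P}$ over $\mathrm{supp}(\mathcal F)$: if some non-zero conormal covector $dt|_{\bar x}$ (for a function $t$ on $P$ vanishing on $X$ with $\bar x \in \mathrm{supp}(\mathcal F)$) lay outside $C'_{\bar x}$, then the weak test pair $(t, \mathrm{id})$ on a Zariski neighborhood of $\bar x$ would be $C'$-transversal at $\bar x$, forcing local acyclicity of $t$ with respect to $i_*\mathcal F$; yet the vanishing cycles at $\bar x$ compute $\mathcal F_{\bar x} \neq 0$, a contradiction. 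With $N^*_{X/P} \subseteq C'$ over $\mathrm{supp}(\mathcal F)$, a fiberwise computation using the identity $di^{-1}(di(S)) = S + \ker di$ yields $i_\circ i^\circ C' = C'$, closing both inclusions. The main technical difficulty is the lifting construction, specifically the simultaneous verification of $C'$-transversality of the lifted pair and the propagation of local acyclicity through the product structure via proper base change.
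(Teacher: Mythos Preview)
Your overall strategy is close in spirit to the paper's, but there are two genuine gaps.

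\textbf{Circularity in part (ii).} You invoke Theorem~\ref{existrss} to conclude that $SS(\mathcal F,X/S)$ exists after shrinking $S$. But Theorem~\ref{existrss} is proved \emph{using} this very proposition (it reduces from $X$ to $\mathbb A^n_S$ via Proposition~\ref{rssclosedimm}(ii)), so the appeal is circular. Moreover, the statement asserts that $SS(\mathcal F,X/S)$ exists \emph{without} shrinking $S$, which your argument does not give. The repair is easy and in fact makes your route cleaner than the paper's: once you have $i^\circ C'\in\mathcal C(\mathcal F,X/S)$, observe that for any $C''\in\mathcal C(\mathcal F,X/S)$ one has $i_\circ C''\in\mathcal C(i_*\mathcal F,P/S)$ by Lemma~\ref{bei2.2}(3), hence $C'\subseteq i_\circ C''$, hence $i^\circ C'\subseteq i^\circ i_\circ C''=C''$ (since $di$ is surjective). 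Thus $i^\circ C'$ is already the smallest element and $SS(\mathcal F,X/S)=i^\circ C'$ exists unconditionally. Combined with your conormal lemma this yields $i_\circ SS(\mathcal F,X/S)=i_\circ i^\circ C'=C'$ directly. The paper instead constructs a specific section $s\colon\mathbb T^*(X/S)\hookrightarrow\mathbb T^*(P/S)\times_PX$ via an auxiliary \'etale neighborhood $\widetilde P$ and shows $s^{-1}(C')$ is minimal; it then needs a separate genericity argument (varying the section by automorphisms $\theta$ of $\mathbb A^m_S$ and passing to the generic fibre) to get the equality after shrinking $S$.

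\textbf{Weak test pairs in part (i).} Your lifting relies on an \'etale neighborhood $V\to P$ carrying a product decomposition $V\cong\mathbb A^d_S\times(V\cap X)$; such a splitting is not available Zariski-locally in general. Consequently the lifted map $\mathbb A^d_S\times U\to V\to P$ is only \'etale, not an open immersion, so the lifted pair is \emph{not} a weak test pair of $P$, and you cannot invoke the defining property of $C'=SS^w(i_*\mathcal F,P/S)$. The paper avoids this by a direct pointwise lift of functions: given a weak test pair $(u,v)\colon\mathbb A^1_S\leftarrow V\to X$ with $du_x=\alpha$, choose Zariski-local coordinates $t_1,\dots,t_m\in\mathcal O_{P,x}$ with $X=V(t_{n+1},\dots,t_m)$ and lift the function $a\in\mathcal O_{X,x}$ to $b\in\mathcal O_{P,x}$ with prescribed differential $\beta\in di^{-1}(\alpha)$. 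This produces a genuine weak test pair of $P$ on a Zariski open and shows directly that every $\beta\in di^{-1}(\alpha)$ lies in $SS^w(i_*\mathcal F,P/S)$ whenever $\alpha\in SS^w(\mathcal F,X/S)$.
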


Please confer to \cite[Lemma 2.5]{Beilinson15} for the case where $S_{\mathrm{red}}=\mathrm{Spec}(k)$ is the spectrum of a field.  We mimic the part of {\it loc. cit.} where $k$ has infinite many elements to prove Proposition \ref{rssclosedimm}.

\begin{proof}
(i). Since $i:X\rightarrow P$ is a closed immersion, a weak test pair $(g,h):\mathbb A^1_S\leftarrow U\rightarrow P$ relative to $S$ is $i_*\mathcal F$-acyclic if and only if $(g',h'):\mathbb A^1_S\leftarrow U\times_PX\rightarrow X$ is $\mathcal F$-acyclic, where $g':U\times_PX\rightarrow Y$ is the composition of the first projection $\mathrm{pr}_1:U\times_PX\rightarrow U$ and $g:U\rightarrow Y$ and $h':U\times_PX\rightarrow X$ is the base change of $h:U\rightarrow P$. It implies that $i_{\circ}SS^w(\mathcal F,X/S)$ is an element of $\mathcal C^w(i_*\mathcal F,P/S)$, i.e., $SS^w(i_*\mathcal F,P/S)\subseteq i_{\circ}SS^w(\mathcal F,X/S)$.

Let $x$ be a point of $X$, $\alpha$ a non-zero vector of $\mathbb T^*_x(X/S)$ and $\beta$ a pre-image of $\alpha$ in $\mathbb T^*_x(P/S)$.
 Let $(u,v):\mathbb A^1_S\leftarrow V\rightarrow X$ be a weak test pair of $X$ relative to $S$, such that $V$ is a Zariski open neighborhood of $x$ and that $du_x=\alpha$. Since $P$ and $X$ are smooth over $S$, we can find elements $t_1,\cdots,t_m\in \mathcal O_{P,x}$ such that
\begin{itemize}
\item[1.]
$\mathcal O_{X,x}=\mathcal O_{P,y}/(t_{n+1},\cdots,t_m)$ $(n<m)$, and the element $a\in \mathcal O_{X,x}$ induced by $u:V\rightarrow \mathbb A^1_S$ writes
$a=\sum_{1\leq j\leq n}s_i\bar t_i$ $(s_i\in \mathcal O_S)$;
\item[2.]
$dt_1\otimes 1,\cdots,dt_m\otimes 1$ is a base of $k(x)$-vector space $\mathbb T^*_x(P/S)=\Omega^1_{P/S,x}\otimes_{\mathcal O_{P,x}}k(x)$ and $dt_1\otimes 1,\cdots,dt_n\otimes 1$ is a base of $k(x)$-vector space $\mathbb T^*_x(X/S)=\Omega^1_{X/S,x}\otimes_{\mathcal O_{X,x}}k(x)$.
\end{itemize}
Notice that $da\otimes 1=\alpha\in \bT^*_x(X/S)$. We can choose an element
$b=\sum_{1\leq j\leq n}s_jt_j+\sum_{n+1\leq j\leq m}r_jt_j\in \mathcal O_{P,x}$ $(r_j\in \mathcal O_S)$, such that $db\otimes 1=\beta$. Observe that $\bar b=a\in \mathcal O_{X,x}$. The element $b$ induces a weak test pair $(u',v'): \mathbb A^1_S\leftarrow V'\rightarrow P$, where $V'$ is a Zariski neighborhood of $x\in P$ and the restriction of $u'$ to $V'\times_PX$ coincides with $u:V\rightarrow \mathbb A^1_S$ in a neighborhood of $x\in X$. This fact implies that, if $\alpha\in SS_x^w(\mathcal F,X/S)$, any vector $\beta\in di^{-1}_x(\alpha)$ is contained in $SS_x^w(i_*\mathcal F,P/S)$. Hence, we get \eqref{wsscloseimm}.

(ii). By Lemma \ref{lem:open}, this is a local problem for the Zariski topology on $X$. We may assume that $P$ is affine over $S$ and that there exists an \'etale map $\chi:P\rightarrow \mathbb A^m_S$. Since $X$ is also smooth over $S$, there is a canonical injection $i_0:\mathbb A^n_S\rightarrow \mathbb A^m_S=\mathbb A^n_S\times_S\mathbb A^{n-m}_S$ induced by the zero-section of $\mathrm A^{m-n}_S$, such that the following diagram is Cartesian
\begin{equation}\label{cartdiagxp}
\xymatrix{\relax
X\ar[r]^i\ar[d]_{\chi_0}\ar@{}|-(0.5){\Box}[rd]&P\ar[d]^{\chi}\\
\mathbb A^n_S\ar[r]_-(0.5){i_0}&\mathbb A^m_S}
\end{equation}
The canonical projection $\mathrm{pr}:\mathbb A^m_S=\mathbb A_S^n\times_S \mathbb A^{m-n}_S\rightarrow \mathbb A_S^n$
induces a map $\chi_n=\mathrm{pr}\circ \chi:P\rightarrow \mathbb A^n_S$ and also a section $s\colon \mathbb T^\ast (X/S)\hookrightarrow \mathbb T^\ast (P/S)\times_P X$ of the canonical map $di\colon  \mathbb T^\ast (P/S)\times_P X\rightarrow  \mathbb T^\ast (X/S)$.
Consider the following commutative diagram with Cartesian squares
 \begin{equation*}
 \xymatrix{\relax
 X\ar[r]^-(0.5){\delta}&X\times_{\mathbb A^n_S}X\ar@{}|{\Box}[rd]\ar[d]_{\mathrm{pr}'_1}\ar[r]^-(0.5){i\times \mathrm{id}}&P\times_{\mathbb A^n_S}X\ar@{}|{\Box}[rd]\ar[d]_{\mathrm{pr}_1}\ar[r]^-(0.5){\mathrm{pr}_2}&X\ar[d]^{\chi_0}\\
 &X\ar[r]_i&P\ar[r]_-(0.5){\chi_n}&\mathbb A^n_S}
 \end{equation*}
Since $\chi_0:X\rightarrow \mathbb A^n_S$ is \'etale, the scheme $X\times_{\mathbb A^n_S}X$ is a disjoint union of the diagonal $\delta(X)$ and its complement $Y$. We denote by $\widetilde P$ the complement of $Y$ in $P\times_{\mathbb A^n_S}X$ and by $\rho:\widetilde P\rightarrow X$ and $\phi:\widetilde P\rightarrow P$ the restriction of $\mathrm{pr}_1$ and $\mathrm{pr}_2$ on $\widetilde P$, respectively.
We observe that $X=\widetilde P\times_PX$, which gives a section $\tilde i: X\rightarrow \widetilde P$ of $\rho:\widetilde P\rightarrow X$.  By Lemma \ref{bei2.1}(i), the base of $SS(i_\ast\mathcal F, P/S)$ is contained in $X$. Hence, we have a closed conical subset $\rho_\circ \phi^\circ SS(i_\ast\mathcal F, P/S)=s^{-1}(SS(i_\ast\mathcal F, P/S))$ of $\mathbb T^\ast (X/S)$ and we denote it by $C(\mathcal F)$. Since $\mathcal F=\mathrm R\rho_\ast\phi^\ast(i_\ast\mathcal F)$, we have $C(\mathcal F)\in\mathcal C(\mathcal F, X/S)$ (Lemma
\ref{bei2.2}). For any $C'\in\mathcal C(\mathcal F, X/S)$, we have $i_\circ C'\in\mathcal C(i_\ast\mathcal F, P/S)$ ({\it loc. cit.}). Hence $SS(i_\ast\mathcal F, P/S)\subseteq i_\circ C'$ and
\begin{equation*}
C(\mathcal F)=\rho_\circ \phi^\circ SS(i_\ast\mathcal F, P/S)\subseteq \rho_\circ \phi^\circ(i_\circ C')=C'
\end{equation*}
Thus, $C(\mathcal F)$ is the smallest element of $\mathcal C(\mathcal F, X/S)$, i.e.,  $SS(\mathcal F, X/S)=C(\mathcal F)$. By {\it loc. cit.}, we have $SS(i_*\cF,P/S)\subseteq i_\circ SS(\cF, X/S)$.

Let $\theta:\mathbb A^m_S\rightarrow A^m_S$ be an $S$-automorphism of  $\mathbb A^m_S$ that satisfies $\theta\circ i_0=i_0$. We put $\chi_{\theta}=\theta\circ\chi$. The composition $\mathrm{pr}\circ\chi_{\theta}:P\rightarrow \mathbb A^n_S$ induces a section $s_{\theta}:\mathbb T^*( X/S)\rightarrow \mathbb T^*(P/S)\times_PX$ of $di:\mathbb T^*(X/S)\rightarrow \mathbb T^*(P/S)\times_PX$. Following the same argument above, we have
\begin{equation}\label{invofssx}
s_{\theta}^{-1}(SS(i_*\mathcal F,P/S))=SS(\mathcal F, X/S)=s^{-1}(SS(i_*\mathcal F,P/S)).
\end{equation}
After shrinking $S$, we may assume that $S$ is integral and we denote by $\eta$ the generic point of $S$. To prove \eqref{sscloseimm}, it suffices to show
\begin{equation}\label{closeimmgene}
SS(i_*\mathcal F,P/S)\times_S\eta=i_{\circ}SS(\mathcal F,X/S)\times_S\eta
\end{equation}
The case where $\mathrm{supp}(\mathcal F)\cap X_{\eta}=\emptyset$ is trivial and we consider the case where $\mathrm{supp}(\mathcal F)\cap X_{\eta}\neq\emptyset$. Since
\begin{equation*}
i_{\circ}SS^w(\cF,X/S)=SS^w(i_*\mathcal F,P/S)\subseteq SS(i_*\mathcal F,P/S),
\end{equation*}
the conical
set $SS(i_*\mathcal F,P/S)$ contains the restriction of the conormal bundle $\mathbb N_{X|P}$ to the support of $i_*\mathcal F$. If \eqref{closeimmgene} is false, we can fine a closed point $z\in X_{\eta}$ and a non-zero vector $\lambda\in \mathbb T^*_z(X/S)$, such that $di_z^{-1}(\lambda)$ contains vectors in $ (i_{\circ}SS(\mathcal F,X/S))_z\backslash (SS(i_*\mathcal F,P/S))_z$.
Replacing $S$ by a Zariski open dense subset, we can take a sufficiently general $\chi_{\theta}:P\rightarrow \mathbb A^m_S$, such that $\lambda\notin (s^{-1}_{\theta}(SS(i_*\mathcal F,P/S)))_z$. It contradicts to \eqref{invofssx}.
\end{proof}

\section{Existence of relative singular supports}
In this section, let $S$ be a connected Noetherian $\bZ[1/\ell]$-scheme. We take the notation and assumptions of \ref{projspace} and \ref{legtran}. 

\begin{lemma}[{cf. \cite[Lemma 3.3]{Beilinson15}}]\label{bei.3.3}
Let $C$ be a closed conical subset of $\bT^*(\bP/S)$ and $\cF$ an object of $D^b_c(\bP,\Lambda)$. If $\cF$ is micro-supported on $C^+$ relative to $S$, then the Radon transform $R_S(\cF)$ is micro-supported on $C^{\vee+}$ relative to $S$. Conversely, if $R_S(\cF)$ is micro-supported on $C^{\vee+}$ relative to $S$, then, after replacing $S$ by a Zariski open dense subset, $\cF$ is micro-supported on $C^+$ relative to $S$.
\end{lemma}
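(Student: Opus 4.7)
The plan is to exploit the correspondence $\bP \leftarrow \bH \rightarrow \bP^\vee$ together with the identity $\rpr^\vee_\circ \rpr^\circ(C^+) = C^{\vee+}$ established at the end of \ref{legtran}, applying the functorial behavior of micro-support under smooth pullback and proper pushforward that was proved in Lemma \ref{bei2.2}.

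For the forward direction, suppose $\cF$ is micro-supported on $C^+$ relative to $S$. Since $\rpr:\bH\rightarrow\bP$ is smooth, Lemma \ref{lem:1}(1) shows that $\rpr$ is automatically $C^+$-transversal relative to $S$ with $\rpr^\circ C^+ = C^+\times_\bP\bH$, and Lemma \ref{bei2.2}(1) gives that $\rpr^*\cF$ is micro-supported on $\rpr^\circ C^+$ relative to $S$. Next, since $\bH$ is projective over $\bP^\vee$, the base $B(\rpr^\circ C^+)$ is proper over $\bP^\vee$, so Lemma \ref{bei2.2}(3) shows that $R\rpr^\vee_*(\rpr^*\cF)$ is micro-supported on $\rpr^\vee_\circ(\rpr^\circ C^+)=C^{\vee+}$ relative to $S$. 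A shift does not affect the notion of being micro-supported, so this proves $R_S(\cF)$ is micro-supported on $C^{\vee+}$ relative to $S$.

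For the converse, suppose $R_S(\cF)$ is micro-supported on $C^{\vee+}$ relative to $S$. Applying the forward direction we have just proved, with the roles of $\bP$ and $\bP^\vee$ exchanged, we deduce that $R^\vee_S R_S(\cF)$ is micro-supported on $(C^\vee)^{\vee+}$ relative to $S$. Since $C^+ = C^{\vee\vee+}$ by \ref{legtran}, this gives that $R^\vee_S R_S(\cF)$ is micro-supported on $C^+$ relative to $S$. Now apply Proposition \ref{bei1.6.1}: after replacing $S$ by a Zariski open dense subset, the cone $\cC$ of the adjunction map $\cF \rightarrow R^\vee_S R_S(\cF)$ has locally constant cohomologies, hence by Lemma \ref{bei2.1}(iii) is micro-supported on the zero section $\bT^*_\bP(\bP/S)\subseteq C^+$ relative to $S$. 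The distinguished triangle $\cF \rightarrow R^\vee_S R_S(\cF) \rightarrow \cC \rightarrow$ together with Lemma \ref{bei2.1}(iv) then forces $\cF$ to be micro-supported on $C^+$ relative to $S$.

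The main obstacle is the converse direction, where one must justify invoking Proposition \ref{bei1.6.1} after the shrinking of $S$; one also needs to note that shrinking $S$ does not affect the micro-support hypothesis on $R_S(\cF)$ nor the conclusion, since transversality and local acyclicity are compatible with base change along open immersions. The forward direction is a direct assembly from Lemma \ref{bei2.2}; the only delicate point there is to verify that $B(\rpr^\circ C^+)$ is proper over $\bP^\vee$, which is immediate from the properness of $\bH \rightarrow \bP^\vee$.
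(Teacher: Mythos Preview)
Your proof is correct and follows essentially the same route as the paper's: the forward direction is an application of Lemma \ref{bei2.2} (pullback along the smooth map $\rpr$ and pushforward along the proper map $\rpr^\vee$) combined with the identity $\rpr^\vee_\circ\rpr^\circ(C^+)=C^{\vee+}$, and the converse applies the forward direction once more, then invokes Proposition \ref{bei1.6.1} and Lemma \ref{bei2.1}(iii),(iv). Your write-up is simply more explicit than the paper's in unpacking these citations; the only cosmetic point is that to apply Lemma \ref{bei2.1}(iv) literally you should rotate the triangle to $\cC[-1]\rightarrow\cF\rightarrow R^\vee_S R_S(\cF)\rightarrow$ so that $\cF$ sits in the middle.
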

\begin{proof}
If $\cF$ is micro-supported on $C^+$ relative to $S$, then the Radon transform $R_S(\cF)$ is micro-supported on $\rpr^{\vee}_{\circ}\rpr^{\circ}(C^+)=C^{\vee+}$ relative to $S$ (Lemma \ref{bei2.2}).

Conversely, if  $R_S(\cF)$ is micro-supported on $C^{\vee+}$, then the complex $(R^{\vee}_S\circ R_S)(\cF)$ is micro-supported on $\rpr_{\circ}\rpr^{\vee\circ}(C^{\vee+})=C^{\vee\vee+}=C^+$.
After shrinking $S$, we may assume that the mapping cone of the adjunction $\cF\rightarrow (R^{\vee}_S\circ R_S)(\cF)$ has locally constant cohomologies (Proposition \ref{bei1.6.1}). By Lemma \ref{bei2.1}, $\mathcal F$ is also micro-supported on $C^+$ relative to $S$.
\end{proof}

\begin{lemma}[cf. {\cite[Lemma 3.4]{Beilinson15}}]\label{bei3.4}
Let $(g,h):Y\leftarrow U\rightarrow \bP$ be a test pair of $\bP$ relative to $S$. We have the following diagram with Cartesian squares (\ref{legtran})
\begin{equation}\label{dbei3.4}
\xymatrix{\bH_U\ar[r]^{h_U}\ar[d]_{\rpr_U}\ar@{}[rd]|-{\Box}&\bH\ar[r]^{\rpr^{\vee}}\ar[d]^{\rpr}&\bP^{\vee}\\
U\ar[r]_h\ar[d]_g&\bP&&\\
Y&&}
\end{equation}
Let $C$ be a strict closed conical subset of $\bT^*(\bP/S)$ and  $E\subseteq \bH$ the image of $\rP(C)$ in $\bH$ by Legendre transform. Then, the test pair $(g,h)$ is $C$-transversal relative to $S$ if and only if the test pair $(g\circ\rpr_U,\rpr^{\vee}\circ h_U)$ of $\bP^{\vee}$ is $\bT^*\bP^{\vee}$-transversal relative to $S$ at $E_U=E\times_\bP U$.
\end{lemma}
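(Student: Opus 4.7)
The plan is to verify the equivalence pointwise, exploiting the fact that both transversality conditions are defined geometric-point-by-point. Fix a geometric point $\bar v$ of $E_U$, with image $\bar u \in U$, $\bar x = h(\bar u) \in \bP$, and image $(\bar x, \bar x') \in E \subseteq \bH$. By Legendre transform $\theta$, the point $(\bar x,\bar x')$ picks out a non-zero line $\lambda_{\bar x,\bar x'} \subseteq \bT^*_{\bar x}(\bP/S)$ contained in $C_{\bar x}$, together with a matching line $\lambda_{\bar x',\bar x} \subseteq \bT^*_{\bar x'}(\bP^\vee/S)$ satisfying the key identity $d\rpr_{\bar x}(\lambda_{\bar x,\bar x'}) = L_{\bar x,\bar x'} = d\rpr^\vee_{\bar x'}(\lambda_{\bar x',\bar x})$ from \eqref{dpreqn}.

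The heart of the proof is a cotangent computation at the fiber product $\bH_U = \bH \times_\bP U$. Since $\rpr:\bH\to\bP$ is smooth, its base change $\rpr_U:\bH_U\to U$ is smooth, so $\bH_U$ is smooth over $S$, and the universal property of cotangents for a fiber product yields the pushout description
\[
\bT^*_{\bar v}(\bH_U/S) = \bigl(\bT^*_{(\bar x,\bar x')}(\bH/S)\oplus \bT^*_{\bar u}(U/S)\bigr)\big/\bigl\{(d\rpr(\omega),-dh(\omega)) : \omega\in \bT^*_{\bar x}(\bP/S)\bigr\}.
\]
Smoothness of $\rpr$ makes $d\rpr_{\bar x}$ and $d\rpr^\vee_{\bar x'}$ injective on their respective domains, so the relation is well-controlled.

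Using \eqref{dpreqn}, for $\mu \in \bT^*_{\bar x'}(\bP^\vee/S)$ the class of $(d\rpr^\vee(\mu),0)$ in the pushout vanishes iff there is $\omega\in\bT^*_{\bar x}(\bP/S)$ with $d\rpr^\vee(\mu)=d\rpr(\omega)$ and $dh(\omega)=0$; the intersection $d\rpr^\vee(\bT^*_{\bar x'}\bP^\vee)\cap d\rpr(\bT^*_{\bar x}\bP)=L_{\bar x,\bar x'}$ forces $\mu\in\lambda_{\bar x',\bar x}$ and $\omega\in\lambda_{\bar x,\bar x'}$. Hence $d(\rpr^\vee\circ h_U)_{\bar v}$ is injective on $\bT^*_{\bar x'}(\bP^\vee/S)$ iff $dh_{\bar u}$ does not annihilate the line $\lambda_{\bar x,\bar x'}$. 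As $\bar v$ varies over the fiber of $E_U$ above $\bar u$, the lines $\lambda_{\bar x,\bar x'}$ sweep out all of $\rP(C_{\bar x})$, so injectivity at every such $\bar v$ is equivalent to $h$ being $C$-transversal at $\bar u$. A parallel pushout analysis shows that $d(g\circ\rpr_U)_{\bar v}(\nu)$ lies in the image of $d(\rpr^\vee\circ h_U)_{\bar v}$ iff $dg_{\bar u}(\nu)\in dh_{\bar u}(\lambda_{\bar x,\bar x'})$; sweeping $\bar v$ across the fiber recovers the condition $dg_{\bar u}(\nu)\notin h^\circ C|_{\bar u}$ for all non-zero $\nu$, i.e., that $g$ is $h^\circ C$-transversal at $\bar u$.

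The main obstacle is the careful book-keeping in the pushout computation: one must verify that only smoothness of $\rpr$ (not of $h$) is needed for the cotangent sheaf of $\bH_U$ to take the clean pushout form at $\bar v$, and handle the degenerate cases $\mu=0$ or $\omega=0$ in the linear-algebra matching across the Legendre identifications. Once these ingredients are in place, aggregating the equivalence obtained at each geometric point $\bar v\in E_U$ (and noting that for $\bar u\in U$ with $C_{\bar x}$ empty, both the $C$-transversality at $\bar u$ and the condition at $E_U$ above $\bar u$ are vacuous) yields both directions of the equivalence in the statement.
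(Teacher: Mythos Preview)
Your argument is correct. The pushout description of $\bT^*_{\bar v}(\bH_U/S)$ is valid because $\rpr:\bH\to\bP$ is smooth, and your linear-algebra matching via \eqref{dpreqn} is accurate; the case distinction at the end is complete because for \emph{strict} $C$ the fiber $C_{\bar x}$ is either empty or contains a nonzero vector (the base $B(C)$ equals the image of $\rP(C)$ in $\bP$, since $\rP(C)\to\bP$ is proper), so no $\bar u$ with $C_{\bar x}=\{0\}$ occurs.

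However, your route differs from the paper's. The paper does not carry out any cotangent computation here: it simply observes that all the transversality conditions involved are defined geometric-point-by-point and, by \S\ref{sub:t}, are detected on fibers over $S$. Hence the lemma follows at once from Beilinson's absolute version \cite[Lemma 3.4]{Beilinson15} applied to each fiber $\bP_s$, $\bH_s$, $U_s$, $Y_s$. What you have written is essentially a direct proof of that absolute lemma (with $S$ carried along harmlessly), so your approach is self-contained but longer, whereas the paper's is a one-line reduction that relies on the cited reference. Both are valid; the paper's is the more economical choice given that the absolute case is already in the literature.
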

In fact, Lemma \ref{bei3.4} is deduced by \cite[Lemma 3.4]{Beilinson15} by taking fibers relative to $S$.

\begin{theorem}[{cf. \cite[Theorem 3.2]{Beilinson15}}]\label{exrssproj}
Let $\cF$ be an object of $D^b_c(\bP,\Lambda)$.  Then, after replacing $S$ by a Zariski open dense subset, the relative singular support $SS(\cF,\bP/S)$ exists and we have
\begin{equation}\label{eprequss}
\rP(SS(\cF,\bP/S))=E_{\rpr}(\rpr^{\vee*}R_S(\cF))\subseteq \bH
\end{equation}
by Legendre transform (\ref{legtran}). Assume that $S$ is irreducible and let $\eta$ be the generic point of $S$. If $\mathrm{supp}(\cF|_{\bP_{\eta}})\neq \bP_{\eta}$, then $SS(\cF,\bP/S)$ is the conical subset of $\bT^*(\bP/S)$ associated to $E_{\rpr}(\rpr^{\vee*}R_S(\cF))$. If, after shrinking $S$, we have $\mathrm{supp}(\mathcal F)=\bP$, then $SS(\cF,\bP/S)$ is the union of conical subset of $\bT^*(\bP/S)$ associated to $E_{\rpr}(\rpr^{\vee*}R_S(\cF))$ and the zero-section of $\bT^*(\bP/S)$ after shrinking $S$ again.
\end{theorem}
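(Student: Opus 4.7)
The plan is to follow the structure of Beilinson's proof of \cite[Theorem 3.2]{Beilinson15}, with the Radon transform swap of Lemma \ref{bei.3.3} and the Legendre transform/transversality dictionary of Lemma \ref{bei3.4} as the two principal engines. I would construct an explicit candidate for $SS(\cF,\bP/S)$ from the relative non-ULA locus on $\bH$ and then verify both an upper bound and a minimality assertion, after shrinking $S$ to absorb the losses in Proposition \ref{bei1.6.1} and the non-emptiness requirement of $\mathcal C(\cF,\bP/S)$ (the latter guaranteed by Proposition \ref{lem:nonempty} applied to $\rpr$).

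Concretely, set $E := E_{\rpr}(\rpr^{\vee*}R_S(\cF)) \subseteq \bH$, and let $C \subseteq \bT^*(\bP/S)$ be the strict closed conical subset with $\rP(C) = E$ under the Legendre isomorphism $\theta$. For the upper bound I would show that, after shrinking $S$, $\cF$ is micro-supported on $C^+$ relative to $S$. By Lemma \ref{bei.3.3}, it is equivalent to show that $R_S(\cF)$ is micro-supported on $C^{\vee+}$ relative to $S$. Given a $C^{\vee+}$-transversal test pair $(g,h) : Y \leftarrow U \to \bP^{\vee}$ relative to $S$, Lemma \ref{bei3.4} applied with the roles of $\bP$ and $\bP^{\vee}$ exchanged identifies this transversality with $\bT^*(\bP/S)$-transversality of the pair $(g \circ \rpr^{\vee}_U, \rpr \circ h_U)$ at $E_U \subseteq \bH_U$, which by Lemma \ref{lem:2}(2) amounts to smoothness of the product map $\bH_U \to \bP \times_S Y$ in a neighborhood of $E_U$. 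Combining $R_S(\cF) = R\rpr^{\vee}_*(\rpr^*\cF)[n-1]$ with proper base change along the Cartesian square in \eqref{dbei3.4}, the universal local acyclicity of $\rpr$ off $E$ with respect to $\rpr^{\vee*}R_S(\cF)$, and the preservation of universal local acyclicity under pushforward by the smooth proper map $\rpr^{\vee}_U$, one concludes that $(g,h)$ is $R_S(\cF)$-acyclic.

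For minimality I would take any $C' \in \mathcal C(\cF,\bP/S)$ and show that $C \subseteq C'$ up to the zero section, after further shrinking $S$. Applying Lemma \ref{bei.3.3} in the opposite direction yields $R_S(\cF)$ micro-supported on $C'^{\vee+}$. Then at every geometric point of $\bH$ outside $\rP(C')$ (viewed inside $\bH$ via Legendre transform), one can produce, by the explicit open-immersion/\'etale-cover construction of weak test pairs of $\bP^{\vee}$, a $C'^{\vee+}$-transversal test pair through that point; Lemma \ref{bei3.4} then reinterprets the resulting acyclicity as universal local acyclicity of $\rpr$ with respect to $\rpr^{\vee*}R_S(\cF)$ at the chosen point. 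By the defining minimality of $E$ as the non-ULA locus of this sheaf under $\rpr$, one obtains $E \subseteq \rP(C')$, hence $C \subseteq C'$. Together with the upper bound this gives the projectivized identification \eqref{eprequss}.

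The final dichotomy in the statement follows from Lemma \ref{bei2.1}: when $\mathrm{supp}(\cF|_{\bP_\eta}) \neq \bP_\eta$, after shrinking $S$ we have $\mathrm{supp}(\cF) \neq \bP$, and Lemma \ref{bei2.1}(i) forces the base of every minimal element of $\mathcal C(\cF,\bP/S)$ to coincide with $\mathrm{supp}(\cF)$, which rules out the zero section and gives $SS(\cF,\bP/S) = C$; when $\mathrm{supp}(\cF) = \bP$ after shrinking, Lemma \ref{bei2.1}(iii) together with the fact that $\cF$ is not (generically) locally constant shows that the zero section must appear as a component, so $SS(\cF,\bP/S) = C \cup \bT^*_\bP(\bP/S)$. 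The principal technical obstacle I anticipate is the ``modulo locally constant cohomologies'' loss introduced in the upper-bound step via Proposition \ref{bei1.6.1}: controlling it so that it can be absorbed into the permitted shrinking of $S$ requires a careful d\'evissage using Lemma \ref{bei2.1}(iv) (so that the locally constant mapping cone contributes only the zero section to any ambient micro-support) combined with the generic-point reduction.
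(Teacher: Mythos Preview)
Your minimality direction is essentially the paper's, though you make it harder than necessary: the paper simply observes that the single test pair $(\rpr,\rpr^{\vee}):\bP\leftarrow\bH-E'\rightarrow\bP^{\vee}$ (where $E'=\rP(C')$ via Legendre) is $C'^{\vee+}$-transversal by \eqref{dpreqn}, so $R_S(\cF)$-acyclicity gives $E\subseteq E'$ in one stroke. No need to build weak test pairs point by point.

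The genuine problem is your upper-bound argument. You aim to show $R_S(\cF)$ is micro-supported on $C^{\vee+}$ by taking a $C^{\vee+}$-transversal test pair $(g,h):Y\leftarrow U\rightarrow\bP^{\vee}$, unpacking $h^*R_S(\cF)=R\rpr^{\vee}_{U*}\big((\rpr\circ h_U)^*\cF\big)[n-1]$, and then splitting $\bH_U$ into the region near $E_U$ and its complement. The near-$E_U$ part is fine: Lemma \ref{bei3.4} plus Proposition \ref{lem:nonempty} (using that $\pi:\bP\rightarrow S$ is ULA for $\cF$) handle it. But off $E_U$ you invoke ``the universal local acyclicity of $\rpr$ off $E$ with respect to $\rpr^{\vee*}R_S(\cF)$,'' and this is the wrong input. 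After your base change the sheaf living on $\bH_U$ is $h_U^*\rpr^*\cF$, and the map whose local acyclicity you need is $g\circ\rpr^{\vee}_U$; neither the projection ($\rpr$ versus $\rpr^{\vee}$) nor the sheaf ($\rpr^{\vee*}R_S(\cF)$ versus $\rpr^*\cF$) matches the definition of $E$. There is no a priori reason that $\rpr^{\vee}$ is ULA relative to $\rpr^*\cF$ on $\bH-E$; establishing that is tantamount to the theorem itself.

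The paper avoids this by working on the $\bP$ side rather than the $\bP^{\vee}$ side. It first replaces $\cF$ by $R^{\vee}_S(\cG)$ with $\cG=R_S(\cF)$ (legitimate after shrinking $S$, since the cone has locally constant cohomologies by Proposition \ref{bei1.6.1} and hence contributes only the zero section via Lemma \ref{bei2.1}(iii),(iv)). Then for a $C^+$-transversal test pair $(g,h):Y\leftarrow U\rightarrow\bP$ one gets $h^*\cF=R\rpr_{U*}\big(h_U^*\rpr^{\vee*}\cG\big)$, and now the sheaf on $\bH_U$ is $h_U^*\rpr^{\vee*}\cG$ and the inner map is $\rpr_U$: exactly the data appearing in the definition of $E=E_{\rpr}(\rpr^{\vee*}\cG)$. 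Off $E_U$ one uses that definition directly (composed with the smooth $g$); near $E_U$ one uses Lemma \ref{bei3.4} together with ULA of $\pi^{\vee}$ for $\cG$. The key maneuver you are missing is this $\cF\leadsto R^{\vee}_S R_S(\cF)$ replacement, which aligns the projections and sheaves so that the defining property of $E$ can be applied in the off-$E_U$ region.
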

\begin{proof}
After shrinking $S$, we may assume that $\pi:\bP\rightarrow S$ is universally locally acyclic with respect to $\cF$ and that the mapping cone of the adjunction $\cF\rightarrow (R^{\vee}_S\circ R_S)(\cF)$ has locally constant cohomologies (Proposition \ref{bei1.6.1}).  By smooth base change theorem, the composition $\pi^{\vee}\circ \rpr:\bH\rightarrow S$ is universally locally acyclic with respect to $\rpr^*\cF$. By proper base change theorem, the morphism $\pi^{\vee}:\bP^{\vee}\rightarrow S$ is universally locally acyclic with respect to $\cG=R_S(\cF)$. In the following, we replace $\cF$ by $R^{\vee}_S(\cG)$. We simply put $E=E_{\rpr}(\rpr^{\vee*}R_S(\cF))\subseteq \bH$ and we denote by $C\subseteq \bT^*(\bP/S)$ the strict closed conical set associated to $E$ by Legendre transform. Let $C'$ be an element of $C^{\mathrm{min}}(\mathcal F, \bP/S)$ and $E' \subseteq\bH$ the Legendre transform of $\rP(C')$.
To prove the theorem, it is enough to show that $C^+=C'^+$.

By the equality \eqref{dpreqn}, the test pair $(\rpr, \rpr^{\vee}):\bP\leftarrow\bH-E'\rightarrow \bP^{\vee}$ of $\bP^{\vee}$ is
$C'^{\vee +}$-transversal relative to $S$. By Lemma \ref{bei.3.3}, the complex $\cF$ is micro-supported on $C'$ relative to $S$ implies that the complex $\cG$ is micro-supported on $C'^{\vee +}$ relative to $S$. Hence, the morphism $\rpr:\bH-E'\rightarrow \bP$ is universally locally acyclic with respect to $\rpr^{\vee*}\cG$, i.e., we have $E\subseteq E'$ and $C^+\subseteq C'^+$.

To show $C'^+\subseteq C^+$, it suffices to verify that $C^+$ is an element of $\cC(\cF,\bP/S)$.
Let $(g,h):Y\leftarrow U\rightarrow \bP$ be a test pair of $\bP$ relative to $S$. We assume that $(g,h)$ is $C^+$-transversal relative to $S$. We take the notation of diagram \eqref{dbei3.4} and we put $E_U=E\times_{\bP}U$. It is sufficient to show that $g:U\rightarrow Y$ is locally acyclic with respect to $h^*\cF=\rR\rpr_{U*}(h^*_U(\rpr^{\vee *}\cG))$. Since $\rpr_U:\bH_U\rightarrow U$ is proper, by proper base change theorem, we are reduced to prove that the morphism $g\circ\rpr_U:\bH_U\rightarrow Y$ is locally acyclic with respect to $h^*_U(\rpr^{\vee *}\cG)$. Indeed,
\begin{itemize}
\item[(i)]
By the definition of $E$, the morphism $\rpr:\bH-E\rightarrow \bP$ is universally locally acyclic with respect to $\rpr^{\vee *}\cG$. Hence $\rpr_U:\bH_U-E_U\rightarrow U$ is universally locally acyclic with respect to $h^*_U(\rpr^{\vee *}\cG)$. Since $(g,h)$ is $C^+$-transversal relative to $S$ and $C^+$ contains the zero-section of $\bT^*(\bP/S)$, the morphism $g:U\rightarrow Y$ is smooth. By \cite[2.14]{SGA4.5}, the morphism $g\circ\rpr_U:\bH_U\rightarrow Y$ is locally acyclic with respect to $h^*_U(\rpr^{\vee *}\cG)$ on $\bH_U-E_U$.
\item[(ii)]
By Lemma \ref{bei3.4}, the test pair $(g\circ\rpr_U,\rpr^{\vee}\circ h_U)$ of $\bP^{\vee}$ is $\bT^*\bP^{\vee}$-transversal relative to $S$ at $E_U=E\times_\bP U$. Notice that $\pi^{\vee}:\bP^{\vee}\rightarrow S$ is universally locally acyclic with respect to $\cG$. By Proposition \ref{lem:nonempty}, the morphism $g\circ \rpr_U:\bH_U\rightarrow Y$ is locally acyclic with respect to $h^*_U(\rpr^{\vee *}\cG)$ in a neighborhood of $E_U$.
\end{itemize}
\end{proof}

\begin{theorem}[{cf. \cite[Theorem 1.3]{Beilinson15}}]\label{existrss}
Let $f:X\rightarrow S$ be a smooth morphism of finite type and $\mathcal F$ an object of $D^b_c(X,\Lambda)$. Then, after replacing $S$ by a Zariski open dense subscheme, the singular support $SS(\mathcal F,X/S)$ of $\mathcal F$ relative to $S$ exists.
\end{theorem}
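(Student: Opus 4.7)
The plan is to reduce to the projective case established in Theorem \ref{exrssproj} via a locally closed embedding and then descend along a closed immersion using Proposition \ref{rssclosedimm}. By Lemma \ref{lem:open} the existence of $SS(\cF,X/S)$ is Zariski local on $X$, and $X$ is quasi-compact, so it suffices to treat one affine open at a time, take the intersection of the finitely many resulting Zariski open dense subschemes of $S$, and glue. I would thus assume that $X$ is affine and admits a closed immersion into some $\bA^m_S$; composing with $\bA^m_S\hookrightarrow\bP^m_S$ yields a locally closed immersion $X\hookrightarrow\bP$, where $\bP=\bP^m_S$. Factor this immersion as $X\stackrel{\iota}{\hookrightarrow} U\stackrel{j}{\hookrightarrow}\bP$, where $\iota$ is a closed immersion and $j$ is the open immersion of a smooth-over-$S$ open subscheme $U\subseteq\bP$ in which $X$ is closed (take $U=\bP\setminus(\overline{X}\setminus X)$).

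Next I would form $\cF':=j_!\iota_*\cF\in D^b_c(\bP,\Lambda)$ and apply Theorem \ref{exrssproj}: after replacing $S$ by a Zariski open dense subscheme, the relative singular support $SS(\cF',\bP/S)$ exists. Restricting along the open immersion $j$ and invoking Lemma \ref{lem:open}, the relative singular support $SS(\iota_*\cF,U/S)$ exists and equals $SS(\cF',\bP/S)\times_\bP U$.

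I would then apply Proposition \ref{rssclosedimm}(ii) to the closed immersion $\iota:X\hookrightarrow U$: since $SS(\iota_*\cF,U/S)$ exists, the proposition yields existence of $SS(\cF,X/S)$ directly (no further shrinkage of $S$ is needed for existence itself, only for the stronger formula $\iota_\circ SS(\cF,X/S)=SS(\iota_*\cF,U/S)$). Only finitely many Zariski open dense shrinkages of $S$ have been performed, so their intersection is still Zariski open dense in $S$, as the statement allows.

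The main obstacle is the hypothesis in Proposition \ref{rssclosedimm}(ii) that $S_{\mathrm{red}}$ is not the spectrum of a field. The complementary case $S_{\mathrm{red}}=\mathrm{Spec}(k)$ is precisely Beilinson's original existence theorem \cite[Theorem 1.3]{Beilinson15}, which the present statement generalizes; so the two cases together cover every $S$. A secondary point to verify is that the locally produced singular supports on an affine cover of $X$ indeed glue to a well-defined closed conical subset of $\bT^*(X/S)$, but this is immediate from Lemma \ref{lem:open} applied fiberwise to the overlaps.
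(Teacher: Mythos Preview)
Your proposal is correct and follows essentially the same route as the paper: separate the field case via Beilinson, localize on $X$ by Lemma~\ref{lem:open}, embed an affine chart into projective space, invoke Theorem~\ref{exrssproj}, and descend along the closed immersion via Proposition~\ref{rssclosedimm}(ii). The only cosmetic difference is that the paper uses the specific factorization $X\hookrightarrow\bA^m_S\hookrightarrow\bP^m_S$ (closed into affine space, then open into projective space), whereas you factor through a general open $U=\bP\setminus(\overline{X}\setminus X)$; since $X$ is already closed in $\bA^m_S$ you could simply take $U=\bA^m_S$, but either choice works.
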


\begin{proof}
The case where $S_{\mathrm{red}}$ is the spectrum of a field is done by \cite[Theorem 1.3]{Beilinson15}.
It suffices to consider the case where $S_{\mathrm{red}}$ is not the spectrum of a field. 
 By Lemma \ref{lem:open}, this is a local problem for the Zariski topology of $X$. We may assume that $f:X\rightarrow S$ is affine. By Proposition \ref{rssclosedimm}, we are reduced to the case where $X=\mathbb A^n_S$. By Lemma \ref{lem:open} again, we are reduced to the case where $X=\bP^n_S$. It is deduced by Theorem \ref{exrssproj}.
 \end{proof}

\begin{proposition}[{\cite[Theorem 3.7]{Beilinson15}}]\label{bei3.7}
Suppose that $S$ is an irreducible excellent Noetherian scheme. Let $\cG$ be an object of $D^b_c(\bP^{\vee},\Lambda)$. Then, after replacing $S$ by a generically dimensional Zariski open dense subscheme, we have $\dim E_{\rpr}(\rpr^{\vee*}\cG)\leq\dim\bP-1$.
\end{proposition}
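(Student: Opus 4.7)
The strategy is to reduce to Beilinson's original theorem over the generic point of $S$ and then propagate the dimension bound by generic flatness. To begin, replacing $S$ by $S_{\mathrm{red}}$ and applying Lemma \ref{gendiml} to the structure morphism $\bH\to S$, we may assume $S$ is integral and generically dimensional, so that $\dim \bP = \dim S + n$ and $\dim \bH = \dim S + 2n - 1$. Let $\eta$ denote the generic point of $S$. Applying \cite[Theorem 3.7]{Beilinson15} to $\cG_\eta \in D^b_c(\bP^{\vee}_\eta, \Lambda)$ yields $\dim_{k(\eta)} E^{(\eta)} \leq n-1$, where $E^{(\eta)} := E_{\rpr_\eta}(\rpr^{\vee*}_\eta \cG_\eta)$.

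The key technical step is to compare the generic fiber $E_\eta := E \times_S \eta$ of $E := E_{\rpr}(\rpr^{\vee*}\cG)$ with $E^{(\eta)}$. The base-change stability of universal local acyclicity recalled in the notation section gives only the inclusion $E^{(\eta)} \subseteq E_\eta$. For the reverse inclusion, after shrinking $S$ to a suitable Zariski dense open subscheme, one argues that if $\rpr_\eta$ is universally locally acyclic at a point $h \in \bH_\eta$ with respect to $\rpr^{\vee*}_\eta \cG_\eta$, then $\rpr$ is already universally locally acyclic at $h$, viewed as a point of $\bH$, with respect to $\rpr^{\vee*}\cG$. This will follow from the constructibility of the non-ULA locus together with the properness and smoothness of $\rpr:\bH\to \bP$ as a $\bP^{n-1}$-bundle: after shrinking $S$, the closed subset $E\subset \bH$ restricts on the generic fiber to exactly $E^{(\eta)}$. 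Combining the two inclusions yields $E_\eta = E^{(\eta)}$ after shrinking $S$, hence $\dim_{k(\eta)} E_\eta \leq n-1$.

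To conclude, apply generic flatness \cite[IV, 6.9.1]{EGA4} (or equivalently, the constructibility of fiber dimension for morphisms of finite type) to the projection $E \to S$. After a further shrinking of $S$, each irreducible component of $E$ either dominates $S$ with equidimensional fibers, or maps to a proper closed subscheme of $S$; restricting $S$ to the complement of the union of those proper images removes components of the second type. For a component $E_\alpha$ that dominates $S$, we have $\dim E_\alpha = \dim S + \dim (E_\alpha)_\eta \leq \dim S + \dim E_\eta \leq \dim S + n - 1 = \dim \bP - 1$. Taking the union over all remaining components gives $\dim E \leq \dim \bP - 1$, as desired.

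The principal obstacle is the comparison $E_\eta = E^{(\eta)}$ after shrinking $S$: without it, the base-change inclusion $E^{(\eta)} \subseteq E_\eta$ runs in the wrong direction for deducing an upper bound on $\dim E$ from Beilinson's bound on $\dim E^{(\eta)}$. Establishing this equality will likely require a generic local acyclicity argument in the spirit of Deligne together with a stratification of $\bP^{\vee}$ adapted to the constructible structure of $\cG$, so as to reduce to the case where $\cG$ is lisse on each stratum and then extend local acyclicity from the generic fiber to a neighborhood in $\bH$.
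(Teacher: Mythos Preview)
The paper's proof is entirely different from yours: it does not reduce to the generic fibre at all, but simply observes that Beilinson's original argument for \cite[Theorem 3.7]{Beilinson15} (which is already formulated with $S$ an irreducible variety over a field) goes through verbatim when $S$ is an irreducible excellent Noetherian scheme. No spreading-out or comparison of non-ULA loci is needed.

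Your alternative route has the genuine gap that you yourself identify in the final paragraph, and the earlier sentence claiming it ``will follow from the constructibility of the non-ULA locus together with the properness and smoothness of $\rpr$'' does not actually resolve it. The only base-change inclusion recorded in the paper (end of \S2.7) is $E^{(\eta)}\subseteq E_\eta$, and the reverse inclusion is a spreading-out statement for universal local acyclicity: given that $\rpr_\eta$ is ULA on an open $V'\subseteq\bH_\eta$, one must produce an open $V\subseteq\bH$ (or $\bH_U$ for some dense open $U\subseteq S$) with $V\cap\bH_\eta=V'$ on which $\rpr$ is ULA. Properness and smoothness of $\rpr$ say nothing about local acyclicity at the points of $V$ lying over non-generic points of $S$, and constructibility of $E$ alone does not force $E_\eta$ to shrink to $E^{(\eta)}$ after restricting $S$. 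Your suggested fix via Deligne's generic local acyclicity together with a stratification of $\bP^{\vee}$ adapted to $\cG$ is a plausible line of attack, but it is itself a substantial argument that you have not carried out, and it is not clearly easier than simply rerunning Beilinson's geometric proof in the relative setting, which is what the paper does. As written, the proposal is a strategy with an acknowledged gap rather than a complete proof.
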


The origin proof of \cite[Theorem 3.7]{Beilinson15} is for the case where $S$ is an irreducible algebraic variety over a field. In fact, the same proof is also valid in our case.

\begin{corollary}\label{ssxydimx}
Suppose that $S$ is an irreducible excellent Noetherian scheme.  
Let $\cF$ be an object of $D^b_c(\bP,\Lambda)$.
Then, after replacing $S$ by a generically dimensional Zariski open dense subscheme,  we have
\begin{equation}\label{dimssleqn}
  \dim SS(\cF,\bP/S)\leq \dim \bP.
\end{equation}
\end{corollary}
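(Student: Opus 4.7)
The plan is to combine Theorem \ref{exrssproj} with Proposition \ref{bei3.7}. First I would apply Theorem \ref{exrssproj} to $\cF$: after shrinking $S$ to a Zariski open dense subscheme, the relative singular support $SS(\cF,\bP/S)$ exists and, via the Legendre transform \eqref{legtran},
\begin{equation*}
\rP(SS(\cF,\bP/S)) = E_{\rpr}(\rpr^{\vee *}R_S(\cF)) \subseteq \bH.
\end{equation*}
Moreover, Theorem \ref{exrssproj} tells us that $SS(\cF,\bP/S)$ is either the strict closed conical subset of $\bT^\ast(\bP/S)$ associated to $E_{\rpr}(\rpr^{\vee *}R_S(\cF))$, or the union of that strict conical subset with the zero-section $\bT^\ast_\bP(\bP/S)$.

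Next I would apply Proposition \ref{bei3.7} to $\cG = R_S(\cF) \in D^b_c(\bP^\vee,\Lambda)$, which, after possibly further shrinking $S$ to a generically dimensional Zariski open dense subscheme, yields
\begin{equation*}
\dim E_{\rpr}(\rpr^{\vee *}R_S(\cF)) \leq \dim\bP - 1.
\end{equation*}
Combined with the previous step, this gives $\dim \rP(SS(\cF,\bP/S)) \leq \dim\bP - 1$.

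Finally, since the strict closed conical subset of $\bT^\ast(\bP/S)$ associated to a closed subset $Z$ of $\rP(\bT^\ast(\bP/S))$ has dimension $\dim Z + 1$ (the extra dimension coming from the $\mathbb{G}_m$-direction), the strict part of $SS(\cF,\bP/S)$ has dimension at most $\dim \bP$. Since the zero-section $\bT^\ast_\bP(\bP/S)$ is isomorphic to $\bP$ and hence of dimension $\dim \bP$, in either of the two cases described above we conclude
\begin{equation*}
\dim SS(\cF,\bP/S) \leq \dim \bP.
\end{equation*}
This is a short deduction rather than a substantive argument; the real content is already contained in Theorem \ref{exrssproj} and Proposition \ref{bei3.7}, so the only potential nuisance is bookkeeping about whether to include the zero-section and making sure both shrinkings of $S$ can be performed simultaneously, which is automatic since the intersection of two Zariski open dense subschemes of an irreducible scheme is still Zariski open dense.
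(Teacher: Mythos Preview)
Your proof is correct and follows exactly the same approach as the paper's own proof: apply Proposition~\ref{bei3.7} to bound $\dim E_{\rpr}(\rpr^{\vee*}R_S(\cF))\leq \dim\bP-1$, identify this set with $\rP(SS(\cF,\bP/S))$ via Theorem~\ref{exrssproj}, and then pass from the projectivization back to the cone. The paper's version is simply terser, leaving the cone/projectivization dimension count and the zero-section case implicit.
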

\begin{proof}
 By Proposition \ref{bei3.7} (cf. \cite[Theorem 3.7]{Beilinson15}), after replacing $S$ by a generically dimensional Zariski open dense subscheme, we have
  \begin{equation*}
   \dim_k E_{\rpr}(\rpr^{\vee*}R_S(\cF))\leq \dim_k\bP-1,
  \end{equation*}
  i.e., $\dim_k\rP(SS(\cF,\bP/S))\leq \dim_k\bP-1$ (Theorem \ref{exrssproj}). Hence we obtain \eqref{dimssleqn}.
\end{proof}

\begin{proposition}[{cf. \cite[Theorem 1.7]{Beilinson15}}]\label{bei1.7}
Suppose that $S$ is an integral excellent Noetherian scheme and that the Veronese embedding $\wt i:\bP\rightarrow\wt\bP$ in \ref{projspace} has degree $d\geq 3$.  Let $\mathcal F$ be an object of $D^b_c(\bP,\Lambda)$ and we put $D=NC(\widetilde{R}_S(\wt i_\ast \cf), {\widetilde{\bP}}^\vee)$ (\ref{ncl}).
Then, after replacing $S$ by a Zariski open dense subset, we have
 \begin{itemize}
    \item[(i)]
For each irreducible component $D_\alpha$ ($\alpha\in I$) of $D$,
there is a unique irreducible closed conical subset
$C_{\alpha}\subseteq \bT^*(\bP/S)$
such that $D_{\alpha}=\wt\rpr^\vee(\rP(\wt i_\circ C_{\alpha}))$ and that
the projection $\wt\rpr^\vee:\bP(\wt i_\circ C_{\alpha})\rightarrow D_{\alpha}$ is generically radicial. We have $SS(\mathcal F, \bP/S)=\bigcup_{\alpha\in I}C_{\alpha}$.
 \item[(ii)]
   The closed subscheme $D$ is an effective Cartier divisor of $\wt\bP^\vee$ relative to $S$.
 \end{itemize}
\end{proposition}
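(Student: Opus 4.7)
The plan is to combine the existence theorem for the relative singular support (Theorem~\ref{existrss}) with the Radon--Legendre transform description of Theorem~\ref{exrssproj}, the geometric smallness and bijection results of Proposition~\ref{geomctod}, and Beilinson's absolute Theorem~1.7 \cite{Beilinson15} applied at the generic point $\eta\in S$, then to spread the conclusion out using Lemma~\ref{nclfiber} and generic flatness.

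After shrinking $S$, Theorems~\ref{existrss} and~\ref{exrssproj} together with Proposition~\ref{rssclosedimm} give the existence of $SS(\cF,\bP/S)$ and $SS(\wt i_*\cF,\wt\bP/S)$ and the identity $SS(\wt i_*\cF,\wt\bP/S)=\wt i_\circ SS(\cF,\bP/S)$, while Corollary~\ref{ssxydimx} gives $\dim SS(\cF,\bP/S)\le\dim\bP$ after further shrinkage. Writing $SS(\cF,\bP/S)=\bigcup_{\alpha\in I}C_\alpha$ as its irreducible decomposition and discarding components that do not dominate $S$, Proposition~\ref{geomctod}(1)--(2) applied to each $C_\alpha$ lets me shrink $S$ once more so that every projection $\wt\rpr^\vee\colon\rP(\wt i_\circ C_\alpha)\rightarrow D'_\alpha:=\wt\rpr^\vee(\rP(\wt i_\circ C_\alpha))$ is generically small, hence generically radicial, and so that $\alpha\mapsto D'_\alpha$ is a bijection onto the irreducible components of $\wt\rpr^\vee(\rP(\wt i_\circ SS(\cF,\bP/S)))$.

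The crucial identification for part~(i) is that, after possibly shrinking $S$,
\[
D=\wt\rpr^\vee(\rP(\wt i_\circ SS(\cF,\bP/S)))=\bigcup_{\alpha\in I}D'_\alpha.
\]
By Theorem~\ref{exrssproj} applied to $\wt i_*\cF$, the middle term equals $\wt\rpr^\vee(E_{\wt\rpr}(\wt\rpr^{\vee*}\wt R_S(\wt i_*\cF)))$, so the task reduces to $D=\wt\rpr^\vee(E_{\wt\rpr}(\wt\rpr^{\vee*}\wt R_S(\wt i_*\cF)))$. The inclusion $\supseteq$ is smooth base change: if $\wt R_S(\wt i_*\cF)$ is locally constant near $x'\in\wt\bP^\vee$, its pullback along the smooth projection $\wt\rpr^\vee$ is locally constant near $(\wt\rpr^\vee)^{-1}(x')$, making $\wt\rpr$ universally locally acyclic there. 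For the reverse inclusion I reduce to the generic point: over $\eta$, Beilinson's absolute Theorem~1.7(i) \cite{Beilinson15} yields precisely this equality for $\cF|_{\bP_\eta}$ (using Theorem~\ref{introclosedA} to identify $SS(\cF,\bP/S)_\eta$ with $SS(\cF|_{\bP_\eta})$ up to reducedness), and Lemma~\ref{nclfiber} combined with generic flatness transports the identification from the fiber to a dense open of $S$. This completes~(i), with $D'_\alpha=D_\alpha$ and uniqueness of $C_\alpha$ coming from Proposition~\ref{geomctod}(2).

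For~(ii) I argue fiberwise: at $\eta$, Beilinson's Theorem~1.7(ii) \cite{Beilinson15} shows that $D_\eta$ is an effective Cartier divisor of $\wt\bP^\vee_\eta$, so each component has codimension one. Shrinking $S$ via \cite[II, 6.9.1]{EGA4} so that every irreducible component of $D$ is flat over $S$, and using \cite[III, 9.5.6]{EGA4} to control fiber dimensions, yields that $D\rightarrow S$ is flat with each fiber an effective Cartier divisor, which is the relative Cartier divisor property. The main obstacle is the identification $D=\wt\rpr^\vee(E_{\wt\rpr}(\wt\rpr^{\vee*}\wt R_S(\wt i_*\cF)))$ in the previous paragraph: bridging the non-constant locus $D$ on $\wt\bP^\vee$ with the failure-of-ULA locus $E_{\wt\rpr}$ on $\wt\bH$ is not formal in either direction, and my approach to the subtler inclusion relies essentially on Beilinson's absolute theorem at $\eta$ together with careful spreading-out. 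The rest of the argument is assembly of tools already established in the paper.
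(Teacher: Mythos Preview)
Your overall strategy is sound and your treatment of part~(ii) matches the paper's. There is, however, one genuine issue and one point where your route is needlessly indirect compared with the paper's.

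\textbf{Circularity.} Your invocation of Theorem~\ref{introclosedA} (equivalently Theorem~\ref{sseqrssfiber}) to identify $SS(\cF,\bP/S)_\eta$ with $SS(\cF|_{\bP_\eta})$ is circular: the paper's proof of Theorem~\ref{sseqrssfiber} relies on Proposition~\ref{bei1.7}, which is what you are proving. Fortunately you only need the inclusion $SS(\cF|_{\bP_\eta})\subseteq (SS(\cF,\bP/S))_\eta$, and this follows directly from the formula $\rP(SS(\cF,\bP/S))=E_{\rpr}(\rpr^{\vee*}R_S(\cF))$ in Theorem~\ref{exrssproj} together with the fact that universal local acyclicity is preserved under the base change $\eta\to S$; the paper itself records this easy inclusion at the start of the proof of Theorem~\ref{sseqrssfiber}. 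So your argument is salvageable, but the reference must be corrected.

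\textbf{A more direct route for the key identification.} For $D=\wt\rpr^\vee(\rP(\wt i_\circ SS(\cF,\bP/S)))$, the paper avoids your detour through $E_{\wt\rpr}$ and the reduction to~$\eta$ entirely. It instead computes the \emph{dual} relative singular support $\wt C:=SS(\wt R_S(\wt i_*\cF),\wt\bP^\vee/S)$ directly: by Lemma~\ref{bei.3.3} and Proposition~\ref{rssclosedimm}(ii), after shrinking $S$ one has $\wt C=(\wt i_\circ C)^\vee$ or $(\wt i_\circ C)^{\vee+}$, where $C=SS(\cF,\bP/S)$. On the other hand, Lemma~\ref{bei2.1}(iii) together with Lemma~\ref{lem:open} shows that the base of the strict part of $\wt C$ is exactly the non-constant locus $D$. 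Since the strict part of $(\wt i_\circ C)^{\vee(+)}$ is $(\wt i_\circ C)^\vee$ and its base is $\wt\rpr^\vee(\rP(\wt i_\circ C))$ by the definition of the Legendre transform, one obtains $D=\wt\rpr^\vee(\rP(\wt i_\circ C))$ in one stroke, without separately arguing two inclusions or appealing to Beilinson's absolute theorem at this stage. Your approach works (once the circularity is removed), but the paper's is both shorter and logically cleaner.
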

\begin{proof}
By Theorem \ref{exrssproj}, we may assume that $SS(\cF,\bP/S)$ and $SS(\wt R_S(\wt i_*\cF),\wt\bP/S)$ exist. We simply put $C=SS(\cF,\bP/S)$ and put $\wt C=SS(\wt R_S(\wt i_*\cF),\wt\bP^\vee/S)$. By Lemma \ref{bei2.1}(iii), we have $D=B(\wt C)$. After shrinking $S$ again, we have $\wt C=(\wt i_\circ C)^\vee$ or $\wt C=(\wt i_\circ C)^{\vee +}$ (Proposition \ref{rssclosedimm}(ii)  and Lemma \ref{bei.3.3}). Hence, we get $D=B(\wt C)=\wt\rpr^\vee(\rP(\wt i_\circ C))$. We denote by $\{C_\alpha\}_{\alpha\in I}$ the set of irreducible components of $C$.
By Lemma \ref{gendiml}, we may assume that $S$ and each irreducible component of $\dim \rP(\wt i_\circ C)$ are generically dimensional. By Corollary \ref{ssxydimx},  we have $\dim \rP(\wt i_\circ C)\leq \dim\wt\bP^\vee-1$. Hence, after shrinking $S$, the map \begin{equation*}
\wt\rpr^\vee:\wt\bH\rightarrow\wt\bP^\vee,\ \ \ \rP(\wt i_\circ C_\alpha)\mapsto D_\alpha=\wt\rpr^\vee(\rP(\wt i_\circ C_\alpha))
\end{equation*}
induces a one-to-one correspondence between the sets of irreducible components of $C$ and $D$, and moreover, $\wt\rpr^\vee:\rP(\wt i_\circ C_\alpha)\rightarrow D_\alpha$ is generically small (Proposition \ref{geomctod}).

For any $s\in S$, we denote by $\wt i_s:\bP_s\rightarrow\wt\bP_s$ (resp. $\wt\rpr_s:\wt\bH_s^\vee\rightarrow \wt\bP_s$ and $\wt\rpr_s^\vee:\wt\bH_s^\vee\rightarrow \wt\bP_s^\vee$) the fiber of $\wt i:\bP\rightarrow\wt\bP$ (resp. $\wt\rpr:\wt\bH^\vee\rightarrow \wt\bP$ and $\wt\rpr^\vee:\wt\bH^\vee\rightarrow \wt\bP^\vee$) at $s$, and by $\wt R_s$ the Radon transform for the pair $(\wt\rpr_s,\wt\rpr_s^\vee):\wt\bP_s\leftarrow\wt
\bH_s\rightarrow\wt\bP^\vee_s$. By proper base change theorem, we have the canonical isomorphism
\begin{equation*}
(\wt R_S(\wt i_*\cF))|_{\wt\bP^\vee_s}\xrightarrow{\sim} \wt R_s(\wt i_{s *}(\cF|_{\bP_s})).
\end{equation*}
Let $\eta$ denotes the generic point of $S$. By Lemma \ref{nclfiber}, we have
\begin{equation*}
D_\eta=NC((\wt R_S(\wt i_*\cF))|_{\wt\bP^\vee_\eta},\wt\bP^\vee_\eta)=NC(\wt R_\eta(\wt i_{\eta *}(\cF|_{\bP_\eta})),\wt\bP^\vee_\eta).
\end{equation*}
By \cite[Theorem 1.7]{Beilinson15}, $D_\eta=NC(\wt R_\eta(\wt i_{\eta *}(\cF|_{\bP_\eta})),\wt\bP^\vee_\eta)$ is an effective Cartier divisor of $\wt\bP^\vee_{\eta}$. Since $D$ is a reduced closed subscheme of $\wt\bP^\vee$ containing $D_\eta$, after shrinking $S$, the closed subscheme $D$ is an effective Cartier divisor of $\wt\bP^\vee$ relative to $S$.
\end{proof}

\begin{theorem}\label{sseqrssfiber}
Let $S$ be an excellent Noetherian scheme, $f:X\rightarrow S$ a smooth morphism of finite type and $\cF$ an object of $D^b_c(X,\Lambda)$. Then, after replacing $S$ by a Zariski open dense subset, for any $s\in S$, we have
 \begin{equation}\label{ssequalwant}
   SS(\cF|_{X_s})=(SS(\cF,X/S)\times_Ss)_\red.
 \end{equation}
\end{theorem}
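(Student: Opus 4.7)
The plan is to reduce the problem to the projective case $X = \bP^n_S$ and then extract the equality from Proposition \ref{bei1.7} combined with its spectrum-of-a-field version \cite[Theorem 1.7]{Beilinson15} applied fiberwise.

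First, I would use that both sides of \eqref{ssequalwant} are Zariski-local on $X$: the relative singular support by Lemma \ref{lem:open}, and the absolute singular support on each $X_s$ by its analogue over a field. One may therefore assume $X$ is affine, and after choosing an \'etale chart reduce to the case of a closed embedding $i\colon X \hookrightarrow \bA^n_S \subset \bP^n_S$. Applying Proposition \ref{rssclosedimm}(ii) over $S$ together with \cite[Lemma 2.5]{Beilinson15} on each fiber, the identity $SS(i_*\cF,\bP/S)=i_\circ SS(\cF,X/S)$ and its pointwise analogue $SS((i_*\cF)|_{\bP_s})=i_{s,\circ}SS(\cF|_{X_s})$ reduce the theorem to the case $X=\bP=\bP^n_S$.

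With $X=\bP$, I would fix a Veronese embedding $\wt i\colon\bP\to\wt\bP$ of degree $d\geq 3$ and apply Proposition \ref{bei1.7} to the divisor $D=NC(\wt R_S(\wt i_*\cF),\wt\bP^\vee)$. After shrinking $S$, its irreducible components $\{D_\alpha\}_{\alpha\in I}$ are in bijection with the irreducible components $\{C_\alpha\}_{\alpha\in I}$ of $SS(\cF,\bP/S)$ via $D_\alpha=\wt\rpr^\vee(\rP(\wt i_\circ C_\alpha))$, with $\wt\rpr^\vee$ generically radicial on each $\rP(\wt i_\circ C_\alpha)$. Using generic flatness \cite[III, 9.6.1]{EGA4} and Lemma \ref{fibradicial}, I would shrink $S$ further so that each $C_\alpha$ and each $D_\alpha$ is flat over $S$ with geometrically irreducible fibers, and so that the generic radiciality of $\wt\rpr^\vee|_{\rP(\wt i_\circ C_\alpha)}$ persists on almost every fiber. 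Then, for a geometric point $\bar s\to S$, proper base change gives an isomorphism $\wt R_S(\wt i_*\cF)|_{\wt\bP^\vee_{\bar s}}\cong \wt R_{\bar s}(\wt i_{\bar s,*}\cF_{\bar s})$, and applying Lemma \ref{nclfiber} componentwise to each $D_\alpha\to S$ identifies $(D_s)_\red$ with $NC(\wt R_{\bar s}(\wt i_{\bar s,*}\cF_{\bar s}),\wt\bP^\vee_{\bar s})$, whose components are exactly $\{(D_\alpha)_{\bar s}\}_{\alpha\in I}$. Applying \cite[Theorem 1.7]{Beilinson15} to $\cF_{\bar s}$ then writes $SS(\cF|_{\bP_s})$ as a union of irreducible closed conical subsets in bijection with these components via the fiberwise Beilinson correspondence; compatibility of the Legendre and Veronese constructions with base change gives $(D_\alpha)_{\bar s}=\wt\rpr_{\bar s}^\vee(\rP(\wt i_{\bar s,\circ}(C_\alpha)_{\bar s,\red}))$, so $(C_\alpha)_{\bar s,\red}$ is matched with the component of $SS(\cF|_{\bP_s})$ corresponding to $(D_\alpha)_{\bar s}$. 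Taking the union over $\alpha\in I$ yields \eqref{ssequalwant}.

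The main obstacle I expect is the last matching step: ensuring that the Beilinson correspondence of Proposition \ref{bei1.7} specializes cleanly to the pointwise correspondence of \cite[Theorem 1.7]{Beilinson15} at every point of a dense open of $S$. This amounts to uniformly arranging three geometric conditions over $S$ after shrinking --- flatness, geometric irreducibility of the fibers of the $C_\alpha$ and $D_\alpha$, and persistence of the generic radiciality of $\wt\rpr^\vee$ on each fiber --- each of which is handled by \cite[III, 9.6.1]{EGA4}, Lemma \ref{fibradicial}, and Proposition \ref{geomctod}. Once these are in place, the bijection of components and the compatibility of the Legendre transform with base change force the desired set-theoretic equality of the reduced fiber and the fiberwise singular support.
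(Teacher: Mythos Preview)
Your overall architecture --- reduce to $X=\bP$ via Lemma~\ref{lem:open} and Proposition~\ref{rssclosedimm}, then pass through the Veronese embedding and compare the divisor $D=NC(\wt R_S(\wt i_*\cF),\wt\bP^\vee)$ with its fiberwise analogue using Proposition~\ref{bei1.7} and \cite[Theorem~1.7]{Beilinson15} --- is exactly the route the paper takes. The difficulty is in your endgame.

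You propose to shrink $S$ so that each irreducible component $C_\alpha$ of $C=SS(\cF,\bP/S)$ (and each $D_\alpha$) has \emph{geometrically irreducible} fibers, and then invoke the uniqueness clause of \cite[Theorem~1.7]{Beilinson15} to identify $(C_\alpha)_{\bar s,\red}$ with a component of $SS(\cF|_{\bP_{\bar s}})$. This step does not go through: the generic fiber $(C_\alpha)_\eta$ is irreducible but has no reason to be \emph{geometrically} irreducible, and when it is not, the constructible locus of $s$ with $(C_\alpha)_s$ geometrically irreducible misses $\eta$ and hence contains no dense open. A finite base extension would repair this --- and the paper does exactly that later, in the proof of Theorem~\ref{maintheocc} --- but the present statement allows only Zariski shrinking of $S$, so your matching-by-uniqueness argument as written cannot be completed.

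The paper avoids this by a different finish. It first secures the \emph{a priori} inclusion $SS(\cF|_{\bP_s})\subseteq (C\times_Ss)_{\red}$ directly from Theorem~\ref{exrssproj}: equation~\eqref{eprequss} gives $\rP(C)=E_{\rpr}(\rpr^{\vee*}R_S(\cF))$, its field analogue gives $\rP(SS(\cF|_{\bP_s}))=E_{\rpr_s}(\rpr_s^{\vee*}R_s(\cF|_{\bP_s}))$, and $E_{\rpr_s}(-)\subseteq (E_{\rpr}(-))_s$ because universal local acyclicity is stable under base change. For the reverse inclusion the paper uses only Lemma~\ref{fibradicial} applied to each $\wt\rpr^\vee\colon\rP(\wt i_\circ C_\alpha)\to D_\alpha$ --- which yields, for every $s$, a bijection between the irreducible components of $\rP(\wt i_\circ C)_s$ and of $D_s$ with each restriction generically radicial, \emph{without} ever assuming the fibers are irreducible --- together with the surjection $\wt\rpr^\vee_s\colon\rP(\wt i_{s\circ}SS(\cF|_{\bP_s}))\twoheadrightarrow D_s$ coming from \cite[Theorem~1.7]{Beilinson15}. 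Since $\rP(\wt i_{s\circ}SS(\cF|_{\bP_s}))$ is already known to sit inside $\rP(\wt i_\circ C)_s$ and the latter maps generically radicially onto $D_s$ component by component, the surjectivity forces it to contain every generic point of $\rP(\wt i_\circ C)_s$, hence to coincide with it. The missing ingredient in your plan is precisely this inclusion step from \eqref{eprequss}; once you have it, Lemma~\ref{fibradicial} already handles possibly reducible fibers and the uniqueness clause (and hence the geometric-irreducibility hypothesis) is no longer needed.
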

\begin{proof}
By d\'evissage in the proof of Theorem \ref{existrss},  we are reduced to the case where $X=\bP$. We take the notation and assumptions of Proposition \ref{bei1.7} and we assume that $S$ is integral. We also omit the subscript "$\mathrm{red}$" in the proof for simplicity.
By Theorem \ref{exrssproj}, we may assume that $C=SS(\cF,\bP/S)$ exists. By \eqref{eprequss}, for any $s\in S$, we have $\rP(SS(\cF|_{\bP_s}))\subseteq \rP(C_s)$. After shrinking $S$, we may assume that $\supp(\cF)=\bP$ if and only if, for any $s\in S$, $\supp(\cF|_{\bP_s})=\bP_s$. Hence, we have $SS(\cF|_{\bP_s})\subseteq C_s$ for any $s\in S$. It implies that $\rP(\wt i_{s\circ}(SS(\cF|_{\bP_s})))$ is a closed subset of $\rP(\wt i_\circ C)_s$ for any $s\in S$.
By Lemma \ref{nclfiber}, after shrinking $S$,  for any $s\in S$, we have
\begin{equation*}
  D_s=NC((\wt R_S(\wt i_*\cF))|_{\wt\bP^\vee_s},\wt\bP^\vee_s)=NC(\wt R_s(\wt i_{s *}(\cF|_{\bP_s})),\wt\bP^\vee_s).
\end{equation*}
 By \cite[Thorem 1.7]{Beilinson15} (Proposition \ref{bei1.7} for $S=s$), the projection $\wt\rpr^\vee_s:\wt\bH_s\rightarrow\wt\bP^\vee_s$ induces a surjective map
 \begin{equation}\label{surjbei1.7}
 \wt\rpr^\vee_s:\rP(\wt i_{s\circ}(SS(\cF|_{\bP_s})))\twoheadrightarrow D_s=NC(\wt R_s(\wt i_{s *}(\cF|_{\bP_s})),\wt\bP^\vee_s).
 \end{equation}

After shrinking $S$, the projection $\wt\rpr^\vee:\wt\bH\rightarrow\wt\bP^\vee$ induces a one-to-one correspondence between the sets of irreducible components $\{\rP(\wt i_\circ(C_\alpha)\}_{\alpha\in I}$ and $\{D_\alpha\}_{\alpha\in I}$ and the canonical surjection $\wt\rpr^\vee:\rP(\wt i_\circ C_\alpha)\rightarrow D_\alpha$ is surjective and generically radicial (Proposition \ref{bei1.7}). Hence, after shrinking $S$, we may assume that (Lemma \ref{fibradicial})
 \begin{itemize}
 \item[(1)]
 For each $\alpha\in I$, the components $\rP(\wt i_\circ C_\alpha)$ and $D_\alpha$ are flat over $S$;
 \item[(2)]
For any $s\in S$, the projection $\wt\rpr^\vee_s:\wt\bH_s\rightarrow\wt\bP^\vee_s$ induces a one-to-one correspondence between the irreducible components of $\rP(\wt i_\circ C)_s$ and $D_s$. We denote by $\{(\rP(\wt i_\circ C)_s)_\beta\}_{\beta\in J(s)}$ (resp. $\{(D_s)_\beta\}_{\beta\in J(s)}$) the sets of irreducible components of $\rP(\wt i_\circ C)_s$ (resp. $D_s$). For each $\beta\in J(s)$, the canonical projection $\wt\rpr^\vee_s:\rP(\wt i_\circ C)_s)_\beta\rightarrow (D_s)_\beta$ is generically radicial.
 \end{itemize}

By \eqref{surjbei1.7} and (2), for any $s\in S$, the closed subset
$\rP(\wt i_{s\circ}(SS(\cF|_{\bP_s})))$ contains all generic points of irreducible components of $\rP(\wt i_\circ C)_s$. Hence, for any $s\in S$, we have
\begin{equation}\label{pssequal}
\rP(\wt i_{s\circ}(SS(\cF|_{\bP_s})))=\rP(\wt i_\circ C)_s.
\end{equation}
 Since both $\wt i_{s\circ}(SS(\cF|_{\bP_s}))$ and $\wt i_{s\circ} (C_s)$ are strict closed conical subsets of $\wt\bP_s$, \eqref{pssequal} implies that
 \begin{equation}\label{ssequal}
 \wt i_{s\circ}(SS(\cF|_{\bP_s}))=\wt i_{s\circ} (C_s).
 \end{equation}
 Since $d\wt i_{s}:\bT^*\wt\bP_s\times_{\wt\bP_s}\bP_s\rightarrow \bT^*\bP_s$ is a surjective map, \eqref{ssequal} implies that $SS(\cF|_{\bP_s})=C_s$.
 \end{proof}

By \cite[II, 6.9.1]{EGA4}, we have the following theorem. It implies that part (i) of Theorem \ref{introdellau} can be generalized to higher relative dimension cases if allowing to replace $S$ by a Zariski open dense subset.

\begin{theorem}\label{closedA}
Let $S$ be an excellent Noetherian scheme, $f:X\rightarrow S$ a smooth morphism of finite type and $\cF$ an object of $D^b_c(X,\Lambda)$. Then, after replacing $S$ by a Zariski open dense subset, there exists a closed subscheme $Z\subseteq \bT^*(X/S)$ which is flat over $S_{\mathrm{red}}$, such that, for any $s\in S$, we have
\begin{equation*}
 SS(\cF|_{X_s})=(Z_{s})_{\mathrm{red}}.
\end{equation*}
\end{theorem}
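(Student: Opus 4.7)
The plan is to combine the two preceding theorems with the generic flatness theorem. First I would apply Theorem \ref{existrss} to pass to a Zariski open dense subscheme of $S$ on which the relative singular support $SS(\cF, X/S)$ exists as a closed conical subset of $\bT^*(X/S)$. Then I would shrink $S$ further so that Theorem \ref{sseqrssfiber} applies, giving the fibrewise identity
\[
SS(\cF|_{X_s}) = (SS(\cF, X/S)\times_S s)_{\mathrm{red}}
\]
for every point $s \in S$. The natural candidate is then $Z := SS(\cF, X/S)$ equipped with its reduced induced subscheme structure inside $\bT^*(X/S)$.

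Once $Z$ has been introduced, only the flatness of $Z$ over $S_{\mathrm{red}}$ remains to be arranged. The subscheme $Z \to S$ is of finite type, being closed in the relative cotangent bundle, and $S_{\mathrm{red}}$ is Noetherian and reduced. Thus the generic flatness theorem \cite[IV, 6.9.1]{EGA4} produces a Zariski open dense subset $V \subseteq S_{\mathrm{red}}$, corresponding to a Zariski open dense subscheme $V' \subseteq S$ with $V'_{\mathrm{red}} = V$, such that $Z\times_S V' \to V'_{\mathrm{red}}$ is flat. Replacing $S$ by $V'$ yields a $Z$ with the required flatness, while the fibrewise formula from Theorem \ref{sseqrssfiber} continues to hold on the smaller base.

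There is essentially no obstacle: once Theorem \ref{sseqrssfiber} is in hand, the statement reduces to a direct appeal to generic flatness. The only point that needs a brief comment is that \cite[IV, 6.9.1]{EGA4} is most often stated with an integral base; to apply it with the reduced but possibly reducible base $S_{\mathrm{red}}$, one uses the integral version on each of the finitely many irreducible components of $S_{\mathrm{red}}$ and takes the (dense open) union of the resulting open subsets. Pulling the result back to $S$ gives the Zariski open dense subscheme over which the theorem holds.
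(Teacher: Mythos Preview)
Your proposal is correct and follows exactly the paper's own argument: the paper proves Theorem \ref{closedA} in one line by invoking generic flatness \cite[II, 6.9.1]{EGA4} on top of Theorem \ref{sseqrssfiber}. Your write-up simply spells out the same steps (and the reduction to an integral base) more explicitly; note that the correct pinpoint citation in the paper's convention is \cite[II, 6.9.1]{EGA4}, not IV.
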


\begin{corollary}
Let $X$ and $S$ be irreducible excellent Noetherian schemes, $f:X\rightarrow S$ a smooth morphism of finite type and $\cF$ an object of $D^b_c(X,\Lambda)$.  Then, after replacing $S$ by a generically dimensional Zariski open dense subset, we have $X$ is generically dimensional, $SS(\cF,X/S)$ is equidimensional and has dimension $\dim X$.
\end{corollary}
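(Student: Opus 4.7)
The plan is to localize at the generic point $\eta\in S$, where Beilinson's equidimensionality theorem gives the correct dimension of the singular support, and then spread out to all of $S$ using flatness. First I would apply Lemma \ref{gendiml} to $f:X\rightarrow S$ to arrange, after shrinking $S$, that both $S$ and $X$ are generically dimensional with $\dim X=\dim S+\dim_{k(\eta)}X_\eta$. Since $f$ is smooth with irreducible source and base, its relative dimension $d=\dim X-\dim S$ is constant along fibers, so the generic fiber $X_\eta$ is smooth and equidimensional of dimension $d$ over $k(\eta)$.

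Next I would invoke Theorem \ref{existrss} so that $C:=SS(\mathcal F,X/S)$ exists, and combine Theorem \ref{sseqrssfiber} with Theorem \ref{closedA} to arrange, after a further shrinking, that $C$ is flat over $S_{\mathrm{red}}$ and that $(C_s)_{\mathrm{red}}=SS(\mathcal F|_{X_s})$ for every $s\in S$. In particular, $(C_\eta)_{\mathrm{red}}=SS(\mathcal F|_{X_\eta})$. Then Beilinson's equidimensionality theorem \cite[Theorem 1.3]{Beilinson15}, applied on the smooth equidimensional $k(\eta)$-scheme $X_\eta$, shows that every irreducible component of $SS(\mathcal F|_{X_\eta})$ has dimension exactly $d$.

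The final step is to compute $\dim C_\alpha$ for each irreducible component $C_\alpha$ of $C$. Any component whose image in $S$ is a proper closed subset is killed by shrinking $S$, so I may assume every $C_\alpha$ dominates $S$, and in particular $(C_\alpha)_\eta$ is nonempty. Applying Lemma \ref{gendiml} in turn to each of the finitely many (flat, dominant, finite-type) morphisms $C_\alpha\rightarrow S$, and shrinking $S$ one more time, I can arrange that each $C_\alpha$ is generically dimensional with $\dim C_\alpha=\dim S+\dim_{k(\eta)}(C_\alpha)_\eta$. Each $(C_\alpha)_\eta$ is a union of irreducible components of $(C_\eta)_{\mathrm{red}}=SS(\mathcal F|_{X_\eta})$, all of dimension $d$ by the previous paragraph, so $\dim_{k(\eta)}(C_\alpha)_\eta=d$ and hence $\dim C_\alpha=\dim S+d=\dim X$ for every $\alpha$, proving that $C$ is equidimensional of dimension $\dim X$.

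The main obstacle is really bookkeeping rather than anything conceptual: a sequence of shrinkings of $S$ is needed (existence of $C$, its flatness, generic dimensionality of $X$ and of each $C_\alpha$, dominance of each $C_\alpha$ over $S$, and the fiberwise identification from Theorem \ref{sseqrssfiber}), but since each ingredient only requires a Zariski open dense subscheme and there are only finitely many components $C_\alpha$, finitely many shrinkings suffice to reduce everything to Beilinson's statement at $\eta$.
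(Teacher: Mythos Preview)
Your proposal is correct and follows the same approach as the paper's proof, which is a one-line citation of Lemma \ref{gendiml}, Theorem \ref{sseqrssfiber}, and \cite[Theorem 1.3]{Beilinson15}; you have simply spelled out how these three inputs combine. One minor remark: your claim that each $(C_\alpha)_\eta$ is a union of irreducible components of $C_\eta$ is slightly stronger than what you need, but the weaker fact $\dim_{k(\eta)}(C_\alpha)_\eta=d$ follows immediately since the generic point $\xi_\alpha$ of $C_\alpha$ lies over $\eta$ and is a generic point of $C_\eta$.
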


\begin{proof}
It is deduced by Lemma \ref{gendiml}, Theorem \ref{sseqrssfiber} and \cite[Theorem 1.3]{Beilinson15}.
\end{proof}

\begin{corollary}[{cf. \cite[Theorem 1.5]{Beilinson15}}]
Let $S$ be an excellent Noetherian scheme, $f:X\rightarrow S$ a smooth morphism of finite type and $\cF$ an object of $D^b_c(X,\Lambda)$. Then, after replacing $S$ by a Zariski open dense subset, we have
\begin{equation*}
SS^w(\cF, X/S)=SS(\cF,X/S).
\end{equation*}
\end{corollary}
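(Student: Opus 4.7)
The inclusion $SS^w(\cF,X/S)\subseteq SS(\cF,X/S)$ is automatic from $\cC(\cF,X/S)\subseteq\cC^w(\cF,X/S)$, and after one shrinking of $S$ both sides exist by Theorem~\ref{existrss}. Both are reduced closed conical subsets of $\bT^*(X/S)$, so the desired equality of subschemes is equivalent to equality of underlying sets. Decomposing along the projection to $S$, this in turn reduces to a fibrewise identification. After a further shrinking of $S$, Theorem~\ref{sseqrssfiber} identifies the reduced fibre of $SS(\cF,X/S)$ at $s$ with $SS(\cF|_{X_s})$, and the absolute result \cite[Theorem~1.5]{Beilinson15} identifies the latter with $SS^w(\cF|_{X_s})$. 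Combined with the known inclusion, the problem reduces to establishing the fibrewise containment
\[
SS^w(\cF|_{X_s})\;\subseteq\;\bigl(SS^w(\cF,X/S)\times_S s\bigr)_{\red}\qquad\text{for every } s\in S.
\]

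This containment is equivalent to the following assertion: whenever $\cF$ is weakly micro-supported on a closed conical subset $C\subseteq\bT^*(X/S)$ relative to $S$, the restriction $\cF|_{X_s}$ is weakly micro-supported on $(C\times_Ss)_{\red}$ in the absolute sense over $k(s)$. The plan is to take an arbitrary $(C\times_Ss)$-transversal absolute weak test pair $(g,h):\bA^1_{k(s)}\leftarrow U\to X_s$ and, using Lemma~\ref{lem:open} and the local character of local acyclicity on the source, reduce to producing local acyclicity around each point $u\in U$ separately. For such a $u$, the finite presentation of $X/S$ allows one to spread a Zariski neighborhood of $u$ in $U$ to an open subscheme $V_u\hookrightarrow X$ meeting $X_s$ in that neighborhood, and to lift the regular function defining $g$ to a function $g':V_u\to\bA^1_S$. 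The fibrewise criterion for transversality recalled in~\ref{sub:t} together with the openness of transversality permits, after a further shrinking of $V_u$, to arrange that the resulting relative weak test pair $(g',V_u\hookrightarrow X)$ is $C$-transversal relative to $S$. The weak micro-support hypothesis then gives $\cF$-acyclicity of this pair, i.e.\ local acyclicity of $g'$ with respect to $\cF|_{V_u}$, and specialising to the fibre at $s$ yields the required local acyclicity of $(g,h)$ near $u$.

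The principal obstacle is this lifting step and the compatibility of local acyclicity in a family with restriction to a single fibre. An additional subtlety arises when $k(s)$ is finite, because absolute weak test pairs are then allowed to involve a finite étale base change, which is not itself permitted in the definition of a weak test pair for $X/S$ when $S_{\red}$ is not a spectrum of a finite field. One handles this by spreading the finite étale cover over $\mathrm{Spec}\,k(s)$ to a finite étale cover of a Zariski neighborhood of $s$ in $S$ (admissible after further shrinking of $S$, since $S$ is excellent) and carrying out the argument on this base change, reducing to the open-immersion case treated above.
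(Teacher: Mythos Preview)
The overall strategy you outline --- spread out an absolute weak test pair to a relative one, use the relative weak micro-support hypothesis to obtain $\cF$-acyclicity, then restrict to the fibre --- is the right idea, and it is essentially what the paper does. The difference, and the source of a genuine gap in your proposal, is that you attempt to carry this out at \emph{every} point $s\in S$, whereas the paper only does it at the generic point $\eta$.

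The gap is precisely your phrase ``specialising to the fibre at $s$ yields the required local acyclicity''. From the weak micro-support hypothesis you obtain only that the spread-out map $g':V_u\to\bA^1_S$ is \emph{locally} acyclic with respect to $\cF|_{V_u}$, not universally so; the upgrade to universal local acyclicity in Lemma~\ref{bei2.1}(ii) genuinely uses the strong micro-support condition and does not apply here. Local acyclicity is not preserved under the base change $s\hookrightarrow S$ when $s$ is not the generic point: the Milnor fibres for $g'_s:(V_u)_s\to\bA^1_{k(s)}$ are cut out of those for $g'$ by an additional closed condition, and acyclicity of the latter says nothing about the former. Your treatment of the finite-residue-field case has a related problem: the definition of a weak test pair relative to $S$ permits the finite \'etale modification only when $S_{\red}$ itself is the spectrum of a finite field, so spreading a finite \'etale cover of $\spec k(s)$ to a neighbourhood of $s$ does not in general produce a legitimate relative weak test pair.

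The paper sidesteps both issues by observing that, since one may shrink $S$ and since $SS^w(\cF,X/S)\subseteq SS(\cF,X/S)$ are both closed in $\bT^*(X/S)$, it suffices to check equality on the generic fibre. At $s=\eta$ the map $\eta\to S$ is a pro-open localisation, so local acyclicity of $g'$ \emph{does} restrict to local acyclicity of $g'_\eta$; and if $k(\eta)$ happens to be finite then $S_{\red}$ is already the spectrum of that finite field, so the finite \'etale modification is allowed on both sides. Thus your argument becomes correct, and coincides with the paper's, once you replace ``for every $s\in S$'' by ``for $s=\eta$'' and invoke this generic-fibre reduction.
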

\begin{proof}
We may assume that $S$ is irreducible and let $\eta$ be the generic point of $S$.
We simply put $B=SS^w(\cF, X/S)$ and put $C=SS(\cF,X/S)$. It is sufficient to prove that $B_\eta=C_\eta$. A priori, we have $B\subseteq C$, hence $B_\eta\subseteq C_\eta$. Any $B_\eta$-transversal weak test pair $(g_0,h_0): \bA^1_\eta\leftarrow U_0\rightarrow X_\eta$ can be extended to a $B$-transversal week test pair $(g,h):\bA^1_S\leftarrow U\rightarrow X$ relative to $S$. Hence, any $B_\eta$-transversal weak test pair is $\cF|_{X_\eta}$-acyclic, i.e., $SS^w(\cF|_{X_\eta})\subseteq B_\eta$. By theorem \ref{sseqrssfiber}, we have $SS(\cF|_{X_\eta})=C_\eta$. By \cite[Theorem 1.5]{Beilinson15}, we have $SS^w(\cF|_{X_\eta})=SS(\cF|_{X_\eta})$. In summary, we have
\begin{equation*}
C_\eta=SS(\cF|_{X_\eta})=SS^w(\cF|_{X_\eta})\subseteq B_\eta\subseteq C_\eta.
\end{equation*}
\end{proof}

\begin{corollary}\label{cor:fex}
Let $S$ and $S'$ be excellent Noetherian schemes, $f:S'\rightarrow S$ a dominant morphism, $f:X\rightarrow S$ a smooth morphism of finite type, $X'=X\times_SS'$ and $\cF$ an object of $D^b_c(X,\Lambda)$. Then, after replacing $S$ and $S'$ by Zariski open dense subsets, we have
\begin{equation}\label{rsspullback}
SS(\cF|_{X'},X'/S')=(SS(\cF,X/S)\times_SS')_{\red}.
\end{equation}
\end{corollary}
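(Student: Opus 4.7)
The strategy is to apply Theorem \ref{existrss} and Theorem \ref{sseqrssfiber} to both $X/S$ and $X'/S'$, and then compare the two sides of \eqref{rsspullback} fiber by fiber over geometric points of $S'$. First, apply Theorem \ref{existrss} to $X/S$, shrinking $S$ so that $SS(\cF, X/S)$ exists, and shrink further so that Theorem \ref{sseqrssfiber} gives
\begin{equation*}
SS(\cF|_{X_{\bar s}}) = (SS(\cF, X/S) \times_S \bar s)_{\red}
\end{equation*}
for every geometric point $\bar s \to S$. Since $f: S' \to S$ is dominant, I then replace $S'$ by its intersection with the preimage of the chosen open in $S$, which remains Zariski dense in $S'$. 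Now apply the same two results to $X'/S'$, shrinking $S'$ so that $SS(\cF|_{X'}, X'/S')$ exists and
\begin{equation*}
SS(\cF|_{X'_{\bar s'}}) = (SS(\cF|_{X'}, X'/S') \times_{S'} \bar s')_{\red}
\end{equation*}
for every geometric point $\bar s' \to S'$.

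Set $A = SS(\cF|_{X'}, X'/S')$ and $B = (SS(\cF, X/S) \times_S S')_{\red}$. Both are reduced closed subsets of $\bT^*(X'/S') = \bT^*(X/S) \times_S S'$. Any closed subset of $\bT^*(X'/S')$ is the union of its intersections with the fibers of $\bT^*(X'/S') \to S'$, so it suffices to show the set-theoretic fibers of $A$ and $B$ agree over every geometric point $\bar s' \to S'$. Fix such $\bar s'$, and let $\bar s \to S$ be its composition with $f$; this is again a geometric point, defined over the \emph{same} algebraically closed field. Hence $X'_{\bar s'} = X_{\bar s}$ and $\cF|_{X'_{\bar s'}} = \cF|_{X_{\bar s}}$, so the set-theoretic fibers of $A$ and $B$ at $\bar s'$ both equal $SS(\cF|_{X_{\bar s}})$ by the two instances of the fiber formula recorded above. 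Therefore $A = B$ as reduced closed subschemes, which is the desired equality \eqref{rsspullback}.

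The only subtle point is the right to apply Theorem \ref{sseqrssfiber} at arbitrary geometric points of $S$ rather than only at scheme-theoretic points; this is routine because the proof of that theorem (via the Radon transform and the non-constant locus, both of which commute with base change) goes through unchanged for any spectrum-of-algebraically-closed-field $\bar s \to S$. This observation is what sidesteps any base-change-of-fields issue: by using geometric points of $S'$ and passing through $f$ to get geometric points of $S$ over the same algebraically closed field, no field extension is ever introduced, so the two singular supports on the fiber $X'_{\bar s'} = X_{\bar s}$ are literally the same object. The main obstacle, in short, is bookkeeping: one has to shrink $S$ and then $S'$ in the correct order, and one must recognise that the ambient bundle is canonically $\bT^*(X/S)\times_S S'$ so that the two closed subsets can be compared inside a common scheme.
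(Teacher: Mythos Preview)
Your argument is correct, and the overall structure---shrink so that Theorem \ref{sseqrssfiber} holds on both sides, then compare fibers---is the same as the paper's. The paper, however, only checks the equality at the single generic point $\eta'$ of $S'$: there Theorem \ref{sseqrssfiber} identifies the $\eta'$-fiber of each side with $SS(\cF|_{X_{\eta'}})$ and $(SS(\cF|_{X_\eta})\times_\eta\eta')_{\red}$ respectively, and these agree by Beilinson's field-change result \cite[Theorem 1.4]{Beilinson15}; one more shrinking of $S'$ then spreads the equality out. Your version instead compares \emph{all} geometric fibers, which requires the geometric-point upgrade of Theorem \ref{sseqrssfiber}. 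That upgrade is indeed routine, but note that the cleanest justification is precisely Theorem \ref{sseqrssfiber} at the underlying scheme-theoretic point together with \cite[Theorem 1.4]{Beilinson15} for the extension $s\to\bar s$; so the field-change issue is not really sidestepped, only relocated. The paper's route is a little shorter because it invokes \cite[Theorem 1.4]{Beilinson15} once explicitly and avoids the fiber-by-fiber bookkeeping.
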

\begin{proof}
We may assume that $S$ and $S'$ are irreducible. Let $\eta$ and $\eta'$ be the generic points of $S$ and $S'$, respectively. By \cite[Theorem 1.4]{Beilinson15} and Theorem \ref{sseqrssfiber}, we have
\begin{equation*}
SS(\cF|_{X'},X'/S')\times_{S'}\eta'=SS(\cF|_{\eta'})=(SS(\cF|_{\eta})\times_\eta\eta')_{\red}=(SS(\cF,X/S)\times_S{\eta'})_{\red}.
\end{equation*}
Then, after shrinking $S'$ again, we get \eqref{rsspullback}.
\end{proof}

\section{Generic constancy of characteristic cycles}

In this section, let $k$ be a perfect field of characteristic $p>0$ ($p\neq\ell$). All $k$-schemes are assumed to be of finite type over $\mathrm{Spec}(k)$.

\subsection{}
Let $X$ be a smooth $k$-scheme of equidimension $n$, $C$ a closed conical subset of $\bT^*X$ and $f:X\rightarrow \bA^1_k$ a $k$-morphism.
A closed point $v\in X$ is called {\it an at most $C$-isolated characteristic point of $f:X\rightarrow \bA^1_k$} if there is an open neighborhood $V\subseteq X$ of $v$ such that  $f: V-\{v\}\rightarrow \bA^1_k$ is $C$-transversal. A closed point $v\in X$ is called a {\it $C$-isolated characteristic point} if $v$ is an at most $C$-isolated characteristic point of $f:X\rightarrow \bA^1_k$ but  $f:X\rightarrow \bA^1_k$ is not $C$-transversal at $v$.

We assume that $C\subseteq \bT^*X$ is of equidimension $n$. Let $\{C_\alpha\}_{\alpha\in I}$ be the set of irreducible components of $C$. We denote by $o$ the origin of $\bA^1_k$. A base $\theta\in\bT^*_o\bA^1_k$ defines a section $\theta:X\rightarrow\bT^*\bA^1_k\times_{\bA^1_k}X$, hence a section $df\circ \theta:X\rightarrow \bT^*X$, where $df:\bT^*\bA^1_k\times_{\bA^1_k}X\rightarrow \bT^*X$ is the canonical morphism. Let $A=\sum_{\alpha\in I}m_\alpha[C_\alpha]$ $(m_\alpha \in\mathbb Z)$ be an $n$-cycle supported on $C$ and $v\in X$ an at most $C$-isolated characteristic point of $f:X\rightarrow \bA^1_k$. Then, there is an open neighborhood $V$  of $v$ in $X$ such that the intersection $(A,[(df\circ\theta)(W)])$ is a $0$-cycle supported on a closed point of $T^*_vV$. Its degree is independent of the choice of the base $\theta\in\bT^*_o\bA^1_k$ and we denote this intersection  number by $(A, df)_{\bT^*X,v}$.

\subsection{}(\cite[Theorem 5.9]{Saito15-2})
Let $X$ be a smooth $k$-scheme of equidimension $n$, $\cF$ an object of $D^b_c(X,\Lambda)$ and $\{C_\alpha\}_{\alpha\in I}$ the set of irreducible components of $SS(\cF)$.  Then, there exists a unique $n$-cycle $CC(\cF)=\sum_{\alpha\in I}m_\alpha [C_\alpha]$ $(m_\alpha\in \mathbb Z)$ of $\bT^*X$ supported on $SS(\cF)$,
satisfying the following Milnor type formula (\ref{eq:milnor}):

For any \'etale morphsim $g:V\rightarrow X$, any morphism $f:V\rightarrow\bA^1_k$, any isolated $g^\circ SS(\cF)$-characteristic point $v\in V$ of $f:V\rightarrow\bA^1_k$ and a geometric point $\bar v$ of $V$ above $v$, we have
\begin{equation}\label{eq:milnor}
  -\sum_i(-1)^{i}\dimtot (\rR^i\Phi_{\bar v}(g^\ast \mathcal F, f))=(g^*CC(\cF),df)_{\bT^*V,v},
\end{equation}
where $\rR\Phi_{\bar v}(g^*\cF,f)$ denotes the stalk at $\bar v$ of the vanishing cycle complex of $g^*\cF$ relative to $f$, $\dimtot(\rR^i\Phi_{\bar v}(g^*\cF,f))$ the total dimension of  $\rR^i\Phi_{\bar v}(g^*\cF,f)$ (\ref{defdimtot}) and $g^*CC(\cF)$ the pull-back of $CC(\cF)$ to $\bT^*V$. We call $CC(\cF)$ the {\it characteristic cycle of} $\cF$.

\begin{proposition}[{\cite[Lemma 5.11, Lemma 5.13]{Saito15-2}}]\label{ccopenclose}
Let $X$ be a smooth $k$-scheme  of equidimension $n$ and $\cF$ be an object of $D^b_c(X,\Lambda)$. Then
\begin{itemize}
\item[(i)]
For any \'etale morphism $j:U\rightarrow X$, we have
\begin{equation*}
j^\ast CC(\cF)=CC(j^*\cF).
\end{equation*}
\item[(ii)]
 For a smooth $k$-scheme $P$ of equidimension $m$ and a closed immersion $h:X\rightarrow P$, we have
\begin{equation*}
h_\circ(CC(\cF))=CC(h_*\cF),
\end{equation*}
where $h_\circ(CC(\cF))$ denotes the push-forward of the cycle $(-1)^{m-n}dh^*(CC(\cF))$ on $\bT^*P\times_PX$ to $\bT^*P$.
\end{itemize}
\end{proposition}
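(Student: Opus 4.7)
The plan is to exploit the uniqueness of the characteristic cycle. Since $CC(\cH)$ for any $\cH\in D^b_c(-,\Lambda)$ is characterized by being supported on $SS(\cH)$ and by satisfying the Milnor-type formula \eqref{eq:milnor}, it suffices in both (i) and (ii) to verify that the proposed cycle meets this characterization.

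For part (i), I would first note that $SS(j^*\cF) = SS(\cF)\times_XU = j^\circ SS(\cF)$, as $j$ is étale (by Lemma \ref{bei2.2}(1) together with smooth descent), so $j^*CC(\cF)$ is supported on $SS(j^*\cF)$. To verify the Milnor formula for $j^*CC(\cF)$, pick an étale test map $g\colon V\rightarrow U$, a function $f\colon V\rightarrow\bA^1_k$, and a $g^\circ SS(j^*\cF)$-isolated characteristic point $v$. The composition $j\circ g\colon V\rightarrow X$ is then étale, $v$ is a $(j\circ g)^\circ SS(\cF)$-isolated characteristic point of $f$, and both $(j\circ g)^*\cF = g^*(j^*\cF)$ and $(j\circ g)^*CC(\cF) = g^*(j^*CC(\cF))$ hold tautologically. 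Applying \eqref{eq:milnor} for $\cF$ to the datum $(j\circ g, f, v)$ therefore delivers the desired equality. This step should be essentially formal.

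For part (ii), the support identity $SS(h_*\cF) = h_\circ SS(\cF)$ follows from Proposition \ref{rssclosedimm}(ii) specialized to $S = \spec(k)$. To check \eqref{eq:milnor} for $h_\circ CC(\cF)$, take a test datum $(g\colon V\rightarrow P, f\colon V\rightarrow\bA^1_k, v)$ with $v$ a $g^\circ SS(h_*\cF)$-isolated characteristic point and form the Cartesian square
\[
\xymatrix{
V'\ar[r]^{g'}\ar[d]_{h'}\ar@{}[rd]|-{\Box}&X\ar[d]^h\\
V\ar[r]_g&P,
}
\]
where $V' = V\times_PX$. After shrinking $V$ around $v$ I may assume $V'$ is smooth over $k$ of equidimension $n$, since $h$ is locally defined by an $\cO_P$-regular sequence of length $m-n$ and $g$ is étale. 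Proper base change yields $g^*h_*\cF\cong h'_*g'^*\cF$, and compatibility of vanishing cycles with push-forward along a closed immersion gives
\[
\rR\Phi_{\bar v}(g^*h_*\cF, f)\cong \rR\Phi_{\bar v}(g'^*\cF, f\circ h')
\]
when $v\in V'$, while both sides vanish when $v\notin V'$ (in that case the intersection number vanishes as well, since $h_\circ CC(\cF)$ is supported over $h(X)$). Applying the Milnor formula for $\cF$ to $(g', f\circ h', v)$ on $X$ then expresses the alternating-total-dimension side as $(g'^*CC(\cF), d(f\circ h'))_{\bT^*V', v}$.

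The main obstacle will be the intersection-theoretic identity
\[
(g^*(h_\circ CC(\cF)), df)_{\bT^*V, v} = (g'^*CC(\cF), d(f\circ h'))_{\bT^*V', v},
\]
which is what bridges the Milnor computations on $V$ and on $V'$. To establish it I would unwind the definition of $h_\circ$ through the two canonical maps $\bT^*V\xleftarrow{\rpr}\bT^*V\times_VV'\xrightarrow{dh'}\bT^*V'$, invoke the excess intersection formula for the regular closed immersion $h'$ of codimension $m-n$ to account for the sign $(-1)^{m-n}$ built into the definition, and reduce if necessary to the codimension-one case by factoring $h$ locally into a sequence of smooth divisor inclusions, where the identity reduces to a direct fibrewise computation against the line $d(f\circ h')\cdot\theta$.
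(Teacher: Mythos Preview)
The paper does not supply its own proof of this proposition: it is stated as a direct citation of \cite[Lemma 5.11, Lemma 5.13]{Saito15-2}, with no argument given. Your proposal is therefore not being compared against anything in the paper itself, but rather against Saito's original proofs.

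Your strategy---verify that the candidate cycle satisfies the Milnor-type characterization \eqref{eq:milnor} and invoke uniqueness---is exactly the approach of \cite{Saito15-2}, and part (i) goes through as you describe. For part (ii) your outline is also essentially Saito's: reduce the intersection-theoretic identity to the codimension-one case by factoring $h$ into smooth divisor inclusions, and handle that case by a direct computation (in Saito's treatment this is packaged via the excess/refined Gysin formalism). One small correction: you invoke Proposition \ref{rssclosedimm}(ii) ``specialized to $S=\spec(k)$'' for the identity $SS(h_*\cF)=h_\circ SS(\cF)$, but that proposition explicitly assumes $S_{\mathrm{red}}$ is \emph{not} the spectrum of a field; the field case is covered instead by \cite[Lemma 2.5]{Beilinson15}, which is what you should cite. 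With that fix, the plan is sound and matches the cited source.
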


The index formula for $\ell$-adic sheaves is the following:
\begin{theorem}[{\cite[Theorem 7.13]{Saito15-2}}]\label{indexformula}
Let $\bar k$ be an algebraic closure of $k$, $X$ a smooth and projective $k$-scheme and $\cF$ an object of $D^b_c(X,\Lambda)$. Then, we have
\begin{equation}\label{indexformulaeq}
\chi(X_{\bar k},\cF|_{X_{\bar k}})=\deg(CC(\cF),\bT^*_XX)_{\bT^*X},
\end{equation}
where $\chi(X_{\bar k},\cF|_{X_{\bar k}})$ denotes the Euler-Poincar\'e characteristic of $\cF|_{X_{\bar k}}$.
\end{theorem}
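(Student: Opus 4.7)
The plan is to reduce to the Grothendieck--Ogg--Shafarevich (GOS) formula on a curve via a generic Lefschetz-type pencil, using the Milnor formula \eqref{eq:milnor} as the bridge between local vanishing cycles and local intersection numbers. The first step is to reduce to the case $X=\bP^N$: choosing a closed immersion $h\colon X\hookrightarrow\bP^N$, Proposition \ref{ccopenclose}(ii) gives $CC(h_*\cF)=h_\circ CC(\cF)$, and a projection-formula computation (in which the sign $(-1)^{N-n}$ built into the definition of $h_\circ$ and the top Chern class of the conormal bundle $\bN^*_{X/\bP^N}$ combine) shows
\begin{equation*}
\deg(h_\circ CC(\cF),\bT^*_{\bP^N}\bP^N)_{\bT^*\bP^N}=\deg(CC(\cF),\bT^*_XX)_{\bT^*X}.
\end{equation*}
Meanwhile $\chi(\bP^N_{\bar k},(h_*\cF)|_{\bar k})=\chi(X_{\bar k},\cF|_{\bar k})$ by properness of $h$, so the identity for $h_*\cF$ on $\bP^N$ implies the one for $\cF$ on $X$.

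Next I would construct a generic pencil on $\bP^N$. After composing with a Veronese embedding $\wt i\colon\bP^N\to\wt\bP$ of degree $d\geq 3$, Beilinson's Radon transform genericity (Proposition \ref{bei1.7}) together with the dimension bound from Corollary \ref{ssxydimx} allow me to pick a sufficiently generic line $\ell\cong\bP^1\subseteq\wt\bP^\vee$ such that the associated rational map $\wt\bP\to\bP^1$, resolved by blowing up the axis, yields a morphism $f\colon\wt Z\to\bP^1$ whose pullback of $\wt i_*\cF$ is micro-supported on the pullback of $SS(\wt i_*\cF)$, and such that $f$ is $SS$-transversal outside finitely many isolated characteristic points $v_1,\dots,v_r\in\wt Z$ lying above points $t_1,\dots,t_r\in\bP^1$. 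Applying the GOS index formula on $\bP^1$ to $\cG=Rf_*(\wt i_*\cF|_{\wt Z})$, together with proper base change to compute the stalks of the vanishing cycle complex at the $v_i$, converts the left-hand side of \eqref{indexformulaeq} into a generic-rank contribution minus $\sum_i\sum_j(-1)^j\dimtot\,\rR^j\Phi_{\bar v_i}(\wt i_*\cF,f)$.

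The Milnor formula \eqref{eq:milnor} then identifies each local total dimension with the intersection number $(CC(\wt i_*\cF),df)_{\bT^*\wt Z,v_i}$. Summing these and identifying the result with $\deg(CC(\wt i_*\cF),\bT^*_{\wt\bP}\wt\bP)_{\bT^*\wt\bP}$ is accomplished by a moving-cycles argument: the section $df\colon\wt Z\to\bT^*\wt Z$ is rationally equivalent, through the family of sections induced by the pencil, to the zero section, so the total intersection $(CC(\wt i_*\cF),df)_{\bT^*\wt Z}$ differs from $\deg(CC(\wt i_*\cF),\bT^*_{\wt\bP}\wt\bP)$ only by the isolated local contributions at the $v_i$ and a global term which matches the generic-rank contribution on $\bP^1$. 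The main obstacle will be making this global intersection-theoretic identity precise; in particular one must control (or annihilate) contributions from the exceptional divisor of the blow-up of the axis, which is handled by choosing the pencil generically enough (via Bertini combined with the Radon-transform transversality genericity of \ref{projspace}--\ref{legtran}) so that $\wt i_*\cF$ is micro-supported on the zero section in a neighborhood of the axis.
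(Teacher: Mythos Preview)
The paper does not prove this statement at all: it is Theorem~7.13 of \cite{Saito15-2}, quoted here as background with no argument supplied, and the text moves directly to Example~\ref{curveCC}. There is therefore no ``paper's own proof'' to compare your proposal against.

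For what it is worth, your outline is broadly the strategy Saito uses in \cite{Saito15-2}: reduce to a projective space via Proposition~\ref{ccopenclose}(ii), take a sufficiently generic Lefschetz pencil (constructed through a Veronese embedding and the Radon-transform machinery), apply the Grothendieck--Ogg--Shafarevich formula on the base $\bP^1$, and use the Milnor formula \eqref{eq:milnor} to convert the local total-dimension terms into local intersection numbers with $CC(\cF)$. The delicate point you flag---matching the sum of local contributions with the global intersection number and controlling the exceptional divisor of the blow-up---is genuine and is handled in \cite{Saito15-2} by a careful deformation argument; your sketch correctly identifies where the work lies but would need those details filled in to be complete.
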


\begin{example}\label{curveCC}
Let $T$ be a smooth $k$-curve, $E$ an effective Cartier divisor of $T$, $T_0$ the complement of $E$ in $T$, $r:T_0\rightarrow T$ is the canonical injection and $\cG$ a locally constant and constructible sheaf of $\Lambda$-modules on $T_0$. Then, we have
\begin{equation*}
CC(r_!\cG)=-\mathrm{rk}_\Lambda(\cG)\cdot[\bT^*_TT]-\sum_{t\in E}\mathrm{dimtot}_t(\cG)\cdot[\bT^*_tT],
\end{equation*}
where $\mathrm{dimtot}_t(\cG)$ denotes the total dimension of $\cG$ at $t$ (\ref{defdimtot}). When $T$ is proper,  the index formula \eqref{indexformulaeq} for $r_!\cG$ is exactly the Grothendieck-Ogg-Shafarevich formula \cite{sga5}.
\end{example}

\subsection{}\label{tdvc}
In the following of this section, we assume that all schemes are over $\bZ[1/\ell]$. Let 
\begin{equation*}
\xymatrix{
D\ar[r]&X\ar[rr]^g\ar[rd]_f&&Y\ar[ld]^h\\
&&S
}
\end{equation*}
be a commutative diagram of morphisms of finite type of Noetherian schemes, where $g:X\rightarrow Y$ is smooth, $h:Y\rightarrow S$ is smooth of relative dimension $1$, $D$ is a closed subscheme of $X$ such that $f|_D:D\rightarrow S$ is qusai-finite. Denote by $U$ the complement of $D$ in $X$.
Let $\cF$ be an object of $D^b_c(X,\Lambda)$.
Assume that $f:X\rightarrow S$ is locally acyclic with respect to $\cF$ and that $g|_U:U\rightarrow Y$ is locally acyclic with respect to $\cF|_U$.
Let $x$ be a point of $D$ and $\bar x$ a geometric point of $X$ above $x$,  $\bar s$ a geometric point of $S$ above $s=f(x)$ and $\rR\Phi_{\bar x}(\cF|_{X_{\bar s}}, g_{\bar s})$ the stalk
of the vanishing cycles complex of $\cF|_{X_{\bar s}}$ relative to $g_{\bar s}:X_{\bar s}\rightarrow Y_{\bar s}$.
We define a function $\varphi_{\cF,g}:D\rightarrow \mathbb Z$ by
\begin{equation}\label{fundimtotvc}
\varphi_{\cF,g}(x)=\sum_i(-1)^i\dimtot(\rR^i\Phi_{\bar x}(\cF|_{X_{\bar s}}, g_{\bar s})),
\end{equation}
which is independent of the choice of geometric point $\bar x$ of $X$ above $x$. The following proposition is the semi-continuity of total dimensions of vanishing cycles complex. 

\begin{proposition}[{\cite[Proposition 2.16]{Saito15-2}}]\label{prop:sfunc}
We take the notation and assumptions of \ref{tdvc}. Then,
the function $\varphi_{\cF,g}:D\rightarrow \mathbb Z$ is constructible. If $f|_D:D\rightarrow S$ is \'etale, the function
\begin{equation*}
f_*(\varphi_{\cF,g}):S\rightarrow \mathbb Z,\ \ \ s\mapsto \sum_{x\in D_{\bar s}}\varphi_{\cF,g}(x)
\end{equation*}
is locally constant on $S$.
\end{proposition}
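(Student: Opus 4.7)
The plan is to treat the rank contribution and the Swan conductor contribution to $\dimtot$ separately, after first globalizing the formation of the vanishing cycles.

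First I would form the relative vanishing cycle complex $\rR\Phi(\cF, g)$ on $X$ associated to the morphism $g\colon X\to Y$. The hypothesis that $g|_U$ is locally acyclic with respect to $\cF|_U$ forces $\rR\Phi(\cF,g)$ to be supported on $D$, and the hypothesis that $f$ is locally acyclic with respect to $\cF$, combined with the factorization $f=h\circ g$ and the smoothness of $h$, gives that the formation of $\rR\Phi(\cF,g)$ commutes with any base change on $S$. Consequently, for every $x\in D$ lying above $s\in S$, the stalk $\rR\Phi_{\bar x}(\cF,g)$ is canonically isomorphic to $\rR\Phi_{\bar x}(\cF|_{X_{\bar s}}, g_{\bar s})$, so $\varphi_{\cF,g}$ is the stalkwise $\dimtot$-function attached to the constructible complex $\rR\Phi(\cF,g)|_D$.

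Next, for constructibility I would split
\[
\varphi_{\cF,g}(x) = \sum_i (-1)^i \dim_\Lambda \rR^i\Phi_{\bar x}(\cF,g) + \sum_i (-1)^i \mathrm{Sw}\bigl(\rR^i\Phi_{\bar x}(\cF,g)\bigr).
\]
The first sum is constructible on $D$ as the stalk-Euler characteristic of a constructible complex. For the second sum, each $\rR^i\Phi_{\bar x}$ carries a continuous action of the absolute Galois group of the strict henselization of $Y$ at $\bar t=g(\bar x)$, which, because $h$ is smooth of relative dimension $1$, is the fraction field of a henselian discrete valuation ring with perfect residue field; constructibility of the Swan conductor function along $D$ then follows from Laumon's constructibility theorem for Swan conductors in the smooth relative curve setting (contained in \cite[Theorem 2.1.1]{Laumon81}).

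For local constancy of $f_\ast(\varphi_{\cF,g})$ when $f|_D$ is \'etale, I would work Zariski-locally on $S$ and decompose $D$ into a disjoint union of \'etale covers $D_\alpha\to S$. Each $g|_{D_\alpha}\colon D_\alpha\to Y$ realizes $D_\alpha$ as a finite family of quasi-sections of the smooth relative curve $h\colon Y\to S$. Viewing the Galois representations carried by $\rR\Phi(\cF,g)$ at the points of $D_\alpha$ as a family parametrized by $S$, one applies the Deligne--Laumon semi-continuity theorem on smooth relative curves (\cite[Theorem 2.1.1]{Laumon81}, recalled above as parts (i)--(ii) of Theorem \ref{introdellau}) to conclude that the sum on each $D_\alpha$ of $\varphi_{\cF,g}$ along the fibers of $S$ is locally constant. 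Summing over $\alpha$ yields the result for $f_\ast(\varphi_{\cF,g})$. The main obstacle is to cleanly identify the Galois representation carried by the stalk $\rR\Phi_{\bar x}(\cF,g)$ at $x\in D$ with a Galois representation attached to some auxiliary sheaf on the relative curve $Y$ at $g(x)$, so that the Deligne--Laumon theorem applies directly; carrying out this identification is exactly where one uses crucially that $h$ has relative dimension $1$, since one needs $Y_{\bar s}$ to be a smooth curve and $g(\bar x)$ to live on a punctured formal disk whose absolute Galois group supports the usual Swan-conductor formalism.
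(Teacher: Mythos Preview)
The paper does not give a proof of this proposition; it is quoted from \cite[Proposition 2.16]{Saito15-2} and used as a black box in the proof of Theorem \ref{maintheocc}. There is therefore no argument in the present paper to compare your sketch against.

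That said, your outline is broadly the shape of Saito's own argument in \cite{Saito15-2}: globalize the vanishing cycles, observe they are supported on $D$ and constructible, and reduce the Swan-conductor contribution and the local constancy statement to the Deligne--Laumon theorem on relative curves. You correctly identify the crux at the end---passing from the Galois action on $\rR\Phi_{\bar x}(\cF,g)$ to a sheaf on the relative curve $Y$ to which \cite{Laumon81} applies directly. Saito resolves this by using the smoothness of $g$: \'etale-locally one may take $X=\mathbb A^m_Y$, and then a d\'evissage (working with $\rR g_!\cF$ and controlling the difference via an induction on $m$) puts one in the relative-curve situation where Deligne--Laumon applies. Your sketch stops before this reduction, so as written it is incomplete at precisely the point you flag. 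Also, your base-change claim for $\rR\Phi(\cF,g)$ along $S$ is correct in spirit but needs a precise citation (e.g.\ the compatibility of vanishing cycles with smooth base change and the local acyclicity of $f$); it is not a purely formal consequence of the hypotheses you list.
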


\subsection{}
Let $S$ be an Noetherian scheme, $f:X\rightarrow S$ a smooth morphism of finite type and $\cF$ an object of $D^b_c(X,\Lambda)$. A cycle $B=\sum_{i\in I} m_i[B_i]$ in $\mathbb T^*(X/S)$ is called the {\it characteristic cycle of} $\mathcal F$ {\it  relative to} $S$ if each $B_i$ is open and equidimensional over $S$ and if, for any algebraic geometric point $\bar s$ of $S$, we have 
\begin{equation}
B_{\bar s}=\sum_{i\in I} m_i[(B_i)_{\bar s}]=CC(\mathcal F|_{X_{\bar s}})
\end{equation}
We denote by $CC(\mathcal F, X/S)$ the characteristic cycle of $\mathcal F$ on $X$ relative to $S$. Notice that relative characteristic cycles do not always exist.

\begin{theorem}\label{maintheocc}
Let $S$ be an excellent Noetherian scheme, $f:X\rightarrow S$ a smooth morphism of finite type and $\cF$ an object of $D^b_c(X,\Lambda)$. Then, there exists a dominant and quasi-finite morphism $\pi:S'\rightarrow S$  such that the relative characteristic cycle $CC(\cF|_{X\times_SS'}, (X\times_SS')/S')$ exists.
\end{theorem}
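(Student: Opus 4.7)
The plan is to follow the strategy outlined in the introduction: descend the characteristic cycle on the geometric generic fiber to a cycle over a suitable finite extension, and then prove that the formation of characteristic cycles on fibers is compatible with this descent after shrinking.

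First I would reduce to the case where $S$ is integral, using that $S$ has finitely many irreducible components and that each can be treated separately. Let $\eta$ be the generic point and $\bar\eta$ a geometric point above it. Applying the results already proved in the paper, I may after shrinking $S$ arrange that: (a) $SS(\cF, X/S)$ exists and is flat over $S_{\mathrm{red}}$ (Theorem \ref{existrss} and Theorem \ref{closedA}); (b) for every $s\in S$ we have $(SS(\cF,X/S)\times_S s)_{\mathrm{red}}=SS(\cF|_{X_s})$ (Theorem \ref{sseqrssfiber}); (c) formation of $SS(\cF,X/S)$ commutes, up to reduction, with dominant base change $S'\to S$ (Corollary \ref{cor:fex}). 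Write $\{C_\alpha\}_{\alpha\in I}$ for the irreducible components of $SS(\cF,X/S)$. By Lemma \ref{fibradicial} I may further assume, after shrinking $S$, that the irreducible components of each fiber $(C_\alpha)_s$ are in bijection with those of $(C_\alpha)_{\bar\eta}$.

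Next I construct the candidate cycle $B$. The characteristic cycle $CC(\cF|_{X_{\bar\eta}})$ on $\bT^*X_{\bar\eta}$ is defined over some finite extension $\eta'/\eta$: since it is supported on the finitely many components of $SS(\cF|_{X_{\bar\eta}})$, which themselves descend to a finite extension of $\eta$, and since the multiplicities are integers, I get a cycle $C$ on $\bT^*X_{\eta'}$ with $C\otimes_{\eta'}\bar\eta=CC(\cF|_{X_{\bar\eta}})$. Let $\pi:S'\to S$ be the normalization of $S$ in $\eta'$ (or any dominant quasi-finite $S'\to S$ realizing $\eta'$); this is quasi-finite by excellence. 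Put $X'=X\times_S S'$. The components of $C$ are the generic fibers of the components $C_\alpha\times_S S'$ of $SS(\cF|_{X'},X'/S')=(SS(\cF,X/S)\times_S S')_{\mathrm{red}}$ (using (c)), so I may define $B=\sum_\alpha m_\alpha [C_\alpha\times_S S']$ with $m_\alpha\in\mathbb Z$ chosen to match $C$ on the generic fiber. By (a) each $C_\alpha\times_S S'$ is flat over $S'_{\mathrm{red}}$.

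It remains to verify that $B_{\bar t}=CC(\cF|_{X_{\bar t}})$ for every geometric point $\bar t\to S'$, possibly after further shrinking $S'$. By construction, $B_{\bar t}$ and $CC(\cF|_{X_{\bar t}})$ are both supported on $SS(\cF|_{X_{\bar t}})$, and the components are in bijection with $\{C_\alpha\}_{\alpha\in I}$ by our reduction. Hence the question reduces to showing that, for each $\alpha$, the coefficient $m_\alpha(\bar t)$ of the corresponding component in $CC(\cF|_{X_{\bar t}})$ is locally constant in $\bar t$. For this I will use the Milnor formula \eqref{eq:milnor}: after \'etale localization and shrinking $S'$, I can find, on a Zariski neighborhood of the generic point of each $C_\alpha$, an $S'$-morphism $g_\alpha:V_\alpha\to\bA^1_{S'}$ together with a section $D_\alpha\hookrightarrow V_\alpha$ finite \'etale over $S'$ such that, fiberwise, $g_\alpha$ has an isolated $SS(\cF|_{X_{\bar t}})$-characteristic point on $D_\alpha$ whose intersection number with $[(C_\alpha)_{\bar t}]$ is nonzero and with every other component is zero. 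The Milnor formula then expresses $m_\alpha(\bar t)$ as (up to a nonzero integer factor) the number $\varphi_{\cF,g_\alpha}(x)$ at the relevant point. Proposition \ref{prop:sfunc} shows that the associated function $f_*(\varphi_{\cF,g_\alpha})$ is locally constant on $S'$. Since both $B_{\bar t}$ and $CC(\cF|_{X_{\bar t}})$ agree on the generic geometric fiber by construction, the local constancy forces equality everywhere after shrinking $S'$ to the connected component of the generic point.

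The main obstacle I anticipate is step three: producing, uniformly over $S'$, functions $g_\alpha$ together with sections $D_\alpha$ that isolate one component of $SS$ at a time with a universally locally acyclic situation outside $D_\alpha$, so that the semi-continuity statement of Proposition \ref{prop:sfunc} applies. Building these requires the flatness and dimension control from Theorem \ref{closedA} and genericity of the components of $SS(\cF,X/S)$ at the chosen generic points; a careful \'etale-local construction (e.g.\ mimicking the construction of test functions in Saito's proof of \cite[Theorem 5.9]{Saito15-2} over the base $S'$) should succeed, and only after passing to such local models can one conclude that the multiplicities are constant on $S'$.
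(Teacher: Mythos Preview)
Your overall strategy is correct and matches the paper's: descend $CC(\cF|_{X_{\bar\eta}})$ to a finite extension $\eta'/\eta$, spread out to a cycle $B$ on $\bT^*(X'/S')$, and then use Proposition~\ref{prop:sfunc} (semi-continuity of total dimensions of vanishing cycles) to show that the fiberwise multiplicities are locally constant. The reduction to integral $S$ and the use of Theorems~\ref{existrss}, \ref{sseqrssfiber}, \ref{closedA} and Corollary~\ref{cor:fex} are all appropriate.

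However, the step you yourself flag as the ``main obstacle'' is precisely where the proof is not complete. You need, uniformly in $S'$, a family of test functions $g_\alpha:V_\alpha\to\bA^1_{S'}$ with a quasi-finite closed locus $D_\alpha\to S'$ such that (i) $g_\alpha$ is $SS$-transversal off $D_\alpha$ fiberwise, (ii) $V_\alpha\to S'$ is locally acyclic for $\cF$, and (iii) $dg_\alpha$ isolates the single component $C_\alpha$ with a \emph{fiberwise constant} intersection multiplicity. Saying that ``a careful \'etale-local construction \ldots\ should succeed'' does not discharge this; in particular, condition (iii) hides a second constancy problem: the Milnor formula gives $-\varphi_{\cF,g_\alpha}=n_\alpha(\bar t)\cdot m_\alpha(\bar t)$, and you must separately show that the local intersection number $n_\alpha(\bar t)$ is constant on $S'$ before concluding anything about $m_\alpha(\bar t)$.

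The paper resolves exactly this point by a global projective construction rather than an ad hoc \'etale-local one. It first reduces to $X=\bP=\bP^n_S$ via Proposition~\ref{ccopenclose}, then uses the Veronese embedding $\wt i:\bP\hookrightarrow\wt\bP$ of degree $d\geq 3$ and the universal line $\wt\bD\subset\wt\bP^\vee\times_S\wt\bG$ over the Grassmannian $\wt\bG$ of lines in $\wt\bP^\vee$. Blowing up along the universal axis yields a diagram in which $\pi':(\bP\times_S\wt\bG)'\to\wt\bD$ plays the role of your $g_\alpha$, and the $C$-non-transversal locus $Z(\wt C)$ is automatically quasi-finite over $\wt\bG$ on a large open. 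Proposition~\ref{prop:sfunc} then gives a constructible function $\varphi_{\cG,\pi'}$ on $Z(\wt C)$, and Saito's construction of $CC$ identifies the fiberwise coefficient of $C_\alpha$ as $-\varphi_\alpha/[\xi_\alpha:\zeta_\alpha]$, where $[\xi_\alpha:\zeta_\alpha]$ is the generic degree of $\rP(\wt C_\alpha)\to\wt D_\alpha$. The constancy of this degree along fibers is exactly Lemma~\ref{lem:ab-4}, combined with the generic radiciality from Proposition~\ref{geomctod} and the fiberwise control of Lemma~\ref{fibradicial}. In other words, the paper replaces your hoped-for local test functions by a single universal pencil in which both the vanishing-cycle contribution and the intersection multiplicity are controlled simultaneously by projective-geometric arguments already established in \S3.

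A minor point: your invocation of Lemma~\ref{fibradicial} to get a bijection between irreducible components of $(C_\alpha)_s$ and $(C_\alpha)_{\bar\eta}$ is not what that lemma says; the paper instead passes to a finite extension so that each $(C_\alpha)_\eta$ becomes geometrically integral, and then uses \cite[III, 9.7.7--9.7.8]{EGA4} to propagate geometric integrality to all fibers.
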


Theorem \ref{maintheocc} generalizes part (ii) of Theorem \ref{introdellau} to higher relative dimensional cases.

\begin{proof}
We may assume that $S$ is affine and integral.
 By Proposition \ref{ccopenclose}, we are reduced to the case where $X$ is a projective space over $S$. We take the notation of  \ref{projspace} and \ref{legtran}, and we write $X=\bP$.  By Theorem \ref{sseqrssfiber}, we may assume that the relative singular support $C=SS(\mathcal F,\bP/S)$ exists and that, for any $s\in S$, we have $SS(\cF|_{\bP_s})=(C_s)_{\rm red}$.

Step 1. We denote by $\wt V$ the free $\cO_S$-module $\Gamma(\wt\bP,\cO_{\wt\bP}(1))$ and by $\wt V^{\vee}$ the free $\cO_S$-module $\Gamma(\wt\bP^\vee,\cO_{\wt\bP^\vee}(1))$. They are of rank $N$. Let $\wt\bG$ be the Grassmannian of projective line in $\wt\bP^\vee$, i.e., the Grassmannian $\mathrm{Gr}(\wt V^{\vee},2)$ of projective quotient $\cO_S$-module of $\wt V^{\vee}$ of rank $2$.
Let $\wt \bD$ be the universal line in $\wt\bP^\vee\times_S\wt \bG$, i.e., for each $s\in S$, 
 \begin{equation*}
\wt \bD_s=\left\{(x,y)\in \wt\bP^\vee_s\times_s\wt \bG_s\,\big{|}\,x\in L_y\right\},
\end{equation*}
 where $L_y\subseteq \wt\bP^\vee_s$ denotes the projective line associated to $y\in \wt \bG_s$. It is isomorphic to the Flag variety $\mathrm{Fl}(\wt V^{\vee}, 2,1)$ over $S$ characterized by surjections $\wt V^{\vee}\twoheadrightarrow L_2\twoheadrightarrow L_1$ of projective $\cO_S$-modules where $\mathrm{rank}_{\cO_S}(L_2)=2$ and $\mathrm{rank}_{\cO_S}(L_1)=1$.  Let $\wt \bA\subseteq\wt\bP\times_S \wt \bG$ be the universal axis, i.e., for any $s\in S$, 
\begin{equation*}
\wt \bA_s=\left\{(x,y)\in \wt\bP_s\times_s\wt \bG_s,\big{|}\,  x\in H_z,\, \textrm{for any}\,\,z\in L_y\right\},
\end{equation*}
where $H_z$ denotes the hyperplane of $\wt\bP_s$ associated to $z\in \wt\bP^\vee_s$. It is isomorphic to the Flag variety $\mathrm{Fl}(\wt V, N-2,1)$ over $S$ characterized by the surjections $\wt V\twoheadrightarrow L'_{N-1}\twoheadrightarrow L'_1$ of projective $\cO_S$-modules where $\mathrm{rank}_{\cO_S}(L'_{N-2})=N-2$ and $\mathrm{rank}_{\cO_S}(L'_1)=1$. Notice that $\wt\bA$ is of codimensional $2$ in $\wt\bP\times_S \wt \bG$. We denote by $\rho:\wt \bD\rightarrow\wt\bP^\vee$ and $\tau:\wt \bD\rightarrow\wt \bG$ the canonical projections and by $\pi:\bP\times_{\wt\bP}\wt\bH\rightarrow \wt\bP^{\vee}$ the composition of $\wt i\times\mathrm{id}:\bP\times_{\wt\bP}\wt\bH\rightarrow \wt\bH$ and the projection $\wt\rpr^\vee:\wt\bH\rightarrow\wt\bP^\vee$.
 We have a commutative diagram (cf. \cite[(5.2)]{Saito15-2})
\begin{equation}\label{eq:dia12}
\xymatrix{
\bP\times_{\wt\bP}\wt\bh\ar[d]_{\pi}\ar@{}|{\Box}[rd]&(\bP\times_S \wt\bG)'\ar[d]^{\pi'}\ar[l]_-(0.5){\rho'}\ar[r]^-(0.5){\tau'}&\bP\times_S \wt\bG\ar[d]^{\rpr_2}\\
\wt\bP^\vee&\wt\bD\ar[r]^{\tau}\ar[l]_{\rho}&\wt\bG
}
\end{equation}
where $(\bP\times_S\wt\bG)'$ denotes the blow-up of $\bP\times_S\wt\bG$ along $\wt\bA\bigcap(\bP\times_S\wt\bG)$ and $\tau'$ is the canonical projection. Due to \cite[Lemma 5.2]{Saito15-2}, the left square of \eqref{eq:dia12} is Cartesian. We put $(\bP\times_S \wt\bG)^\circ=(\bP\times_S \wt\bG)\setminus(\wt\bA\bigcap(\bP\times_S\wt\bG))$.

Step 2. In the rest of the proof, we simply put $C=SS(\cF,\bP/S)$. We denote by $\{C_\alpha\}_{\alpha\in I}$ the set of irreducible components of $C$, by $\wt C$ (resp. $\wt C_\alpha$) the inverse image $(d\wt i)^{-1}(C)$ (resp. $(d\wt i)^{-1}(C_\alpha)$) in $\bP \times_{\wt\bP}\ct(\wt\bp/S)$ (resp. $(d\wt i)^{-1}(C_\alpha)\subseteq \bP \times_{\wt\bP}\ct(\wt\bp/S)$), by $\rP(\wt C)$ (resp. $\rP(\wt C_\alpha)$) the projectivization of $\wt C$ (resp.  $\wt C_\alpha$) in $\bP\times_{\wt\bP}\wt\bH$ by Legendre transform and by $\wt D$ (resp. $\wt D_\alpha$) the image $\pi(\wt C)$ (resp. $\pi(\wt C_\alpha)$) in $\wt \bP^\vee$. By \cite[II, 6.9.1]{EGA4}, we may assume that, for any $\alpha\in I$,  $C_\alpha$ (resp. $\wt C_\alpha$ and $\wt D_\alpha$) is flat over $S$.

Let $\eta$ be the generic point of $S$. There exists a point $\eta'$ and finite morphism $\eta'\rightarrow\eta$ such that, for any $\alpha\in I$, the fiber product $C_{\eta'}=C\times_S\eta'$ is geometrically irreducible (hence, $\rP(\wt C_{\alpha})_{\eta'}$ and $(\wt D_\alpha)_{\eta'}$ are also geometrically irreducible). Let $S'$ be the normalization of $S$ in $\eta'$.  By Corollary \ref{cor:fex}, after replacing $S$ by a Zariski open dense subset of $S'$ (we still denote it by $S$ for simplicity), we may assume that the relative singular support $C$ exists and that, for any $\alpha\in I$, the generic fiber $(C_{\alpha})_\eta$ is geometrically integral (hence, $\bP(\wt C_{\alpha})_{\eta}$ and $(\wt D_\alpha)_\eta$ are also geometrically integral).  Replacing $S$ again by a Zariski open dense subscheme, we may assume that, for any $s\in S$ and any $\alpha\in I$, the fibers $(C_{\alpha})_s$, $(\wt D_\alpha)_s$ and $\rP(\wt C_\alpha)_s$ are geometrically integral (\cite[III, 9.7.7]{EGA4}), and that $(C_{\alpha})_s$'s are different from each other (\cite[III, 9.7.8]{EGA4}). Assume that the Veronese embedding $\wt i:\bP\rightarrow\wt\bP$ has degree $d\geq 3$. By Lemma \ref{fibradicial} and Proposition \ref{geomctod}, we may assume that, for any $s\in S$ and any $\alpha\in I$, the canonical projection $\pi_s:\rP(\wt C_\alpha)_s\rightarrow (\wt D_\alpha)_s$ is generically radicial.

Step 3. Let $(\bP\times_S\wt\bG)^\nabla$ be the largest open subset of $(\bP\times_S\wt\bG)^\circ$ such that the inverse image $Z(\widetilde{C})=\rP(\widetilde{C})\times_{\bP\times_{\wt\bP}\wt\bH}(\bP\times_S\wt\bG)^\nabla$ is quasi-finite over $\wt\bG$.  Using \cite[Lemma 3.10]{Saito15-2} fiberwisely, we obtain that the complement $(\bP\times_{\wt\bP}\wt\bH)\backslash \rP(\wt C)$ is the largest open subset of $\bP\times_{\wt\bP}\wt\bH$ where the test pair $(\pi,\rpr_1):\bP^{\vee}\leftarrow \bP\times_{\wt\bP}\wt\bH\rightarrow \bP$ is $C$-transversal relative to $S$. Then, $(\bP\times_S\wt\bG)^\nabla\setminus  Z(\wt C)$ is
the largest open subset of $(\bP\times_S\wt\bG)^\nabla$ where the test pair $(\pi\circ\rho',\rpr_1):\wt\bD\leftarrow (\bP\times_S\wt\bG)^\nabla\rightarrow\bP$ is $C$-transversal relative to $S$. Hence, $\pi': (\bP\times_S\wt\bG)^\nabla\setminus  Z(\wt C)\rightarrow \wt\bD$ is universally locally acyclic with respect to $\cG=\cF|_{(\bP\times_S\wt\bG)^\nabla}$. After shrinking $S$, we may assume that the canonical projection $\bP\rightarrow S$ is universally locally acyclic with to $\cF$, hence, that $\rpr_2:(\bP\times_S\wt\bG)^\nabla\rightarrow \wt \bG$ is universally locally acyclic with respect to $\cG$.

Consider the following commutative diagram
\begin{equation}\label{eq:sfunc}
\xymatrix{
 Z(\widetilde{C})\ar[r]&(\bP\times_S\wt\bG)^\nabla \ar[rd]_-(0.5){\rpr_2}\ar[rr]^-(0.5){\pi'}&&\wt\bD\ar[ld]^-(0.5){\tau}\\
&&\wt\bG}
\end{equation}
By {\cite[Proposition 2.16]{Saito15-2}} (cf. Proposition \ref{prop:sfunc}), the function \eqref{fundimtotvc}
\begin{equation*}
\varphi_{\cG,\pi'}:Z(\wt C)\rightarrow \mathbb Z,
\end{equation*}
is constructible. For any $\alpha\in I$, we put $Z(\wt C_\alpha)=\rP(\wt C_\alpha)\times_{\bP\times_{\wt\bP}\wt\bH}(\bP\times_S\wt\bG)^{\nabla}$.  Then, there exists an open dense subset $Z'(\wt C)$ of $Z(\wt C)$,  such that $\varphi_{\cG,\pi'}$ is locally constant on $Z'(\wt C)$ and that, for any $\alpha, \beta\in I$, the subsets $Z'(\wt C_{\alpha})=Z'(\wt C)\cap Z(\wt C_{\alpha})$ and $Z'(\wt C_{\beta})=Z'(\wt C)\cap Z(\wt C_{\beta})$ are disjoint. Notice that the projection $\rho':(\bP\times_S\wt\bG)^{\nabla}\rightarrow\bP\times_{\wt\bP}\wt\bH$ is smooth and has geometrically integral fibers. Hence, we may assume that, after shrinking $S$, for any $\alpha\in I$, the scheme $Z'(\wt C_{\alpha})$ is irreducible and, for any $s\in S$, the fiber $Z'(\wt C_{\alpha})_s$ is non-empty and geometrically irreducible.

Step 4. For any $\alpha\in I$, Let $\xi_\alpha$ (resp. $\zeta_\alpha$) be the generic point of $\rP(\wt C_\alpha)$  (resp. $\wt D_\alpha$) and $\varphi_\alpha$ the value of $\varphi_{\cG,\pi'}$ on $ Z'(\widetilde{C}_\alpha)$. We put
\begin{equation}\label{defA}
B=-\sum_{\alpha\in I}\frac{\varphi_\alpha}{[\xi_\alpha:\zeta_\alpha]}[C_\alpha].
\end{equation}
After replacing $S$ by a Zariski open dense subscheme, we may assume that, for any $\alpha\in I$ and any $s\in S$, the fiber $Z'(\wt C_\alpha)_s$ is non-empty. Let $\bar s$ be an algebraic geometric point of $S$. For any $\alpha\in I$, we denote by $(\xi_\alpha)_{\bar s}$ (resp. $(\zeta_\alpha)_{\bar s}$) the generic point of $\rP(\wt C_\alpha)_{\bar s}$ (resp. $(\wt D_\alpha)_{\bar s}$). By \cite[Proposition 5.8, Theorem 5.9]{Saito15-2}, the characteristic cycle $CC(\cF|_{\bP_{\bar s}})$ is defined by
\begin{equation}\label{defCCfiber}
CC(\cF|_{\bP_{\bar s}})=-\sum_{\alpha\in I}\frac{\varphi_{\alpha}}{[(\xi_{\alpha})_{\bar s}:(\zeta_{\alpha})_{\bar s}]}[(C_{\alpha})_{\bar s}].
\end{equation}
By Lemma \ref{lem:ab-4}, for any $\alpha\in I$ and any algebraic geometric point $\bar s$ of $S$, we have
\begin{equation}\label{eq:ab-4}
[\xi_\alpha:\zeta_\alpha]=[(\xi_{\alpha})_{\bar s}:(\zeta_{\alpha})_{\bar s}].
\end{equation}
By \eqref{defA}, \eqref{defCCfiber} and \eqref{eq:ab-4}, we obtain that, for any algebraic geometric point $\bar t$ of $S$, 
\begin{equation*}
B_{\bar t}=CC(\cF|_{\bP_{\bar t}}).
\end{equation*}
Hence, $B$ is the characteristic cycle of $\cF|_{X\times_SS'}$ relative to $S'$.
\end{proof}

\begin{proposition}[Saito]\label{flattransversal}
We assume that $k$ is algebraically closed. Let $T$ be a connected and smooth $k$-scheme of dimension $n$, $g:Y\rightarrow T$ a smooth morphism of finite type and $\cG$ an object of $D^b_c(Y,\Lambda)$. 
Assume that $g:Y\rightarrow T$ is $SS(\mathcal G)$-transversal and that each irreducible component of $SS(\mathcal G)$ is open and equidimensional over $T$. Then, the relative characteristic cycle $CC(\mathcal G, Y/S)$ exists, and we have 
\begin{equation}
CC(\mathcal G, Y/S)=(-1)^n\theta_*(CC(\mathcal G)),
\end{equation}
where $\theta:\mathbb T^*Y\rightarrow \mathbb T^*(Y/T)$ denotes the projection induced by the canonical map $\Omega^1_{Y/k}\rightarrow\Omega^1_{Y/T}$. 
\end{proposition}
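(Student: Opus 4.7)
The plan is to prove that $B := (-1)^n \theta_\ast(CC(\mathcal G))$ is the relative characteristic cycle $CC(\mathcal G, Y/T)$. Write $CC(\mathcal G) = \sum_{\alpha \in I} m_\alpha [C_\alpha]$ where the $C_\alpha$ are the irreducible components of $SS(\mathcal G)$; by hypothesis each $C_\alpha$ is open and equidimensional over $T$ of relative dimension $d := \dim Y - n$, hence of total dimension $\dim Y$. The $SS(\mathcal G)$-transversality of $g$ asserts that $SS(\mathcal G) \cap g^\ast \ct T \subseteq \ct_Y Y$; since $g^\ast \ct T$ is the kernel of the vector bundle surjection $\theta\colon \ct Y \twoheadrightarrow \ct(Y/T)$, this combined with the conicality of $C_\alpha$ and its equidimensionality over $T$ implies that $\theta|_{C_\alpha}\colon C_\alpha \to \ct(Y/T)$ is finite. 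It follows that $\theta_\ast[C_\alpha]$ is a well-formed cycle whose support $\theta(C_\alpha)$ is irreducible, open and equidimensional over $T$, so that $B$ has the structural form of a relative characteristic cycle.

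Next, for each algebraic geometric point $\bar t \to T$ I would identify $B_{\bar t}$ with $CC(\mathcal G|_{Y_{\bar t}})$. Let $j\colon Y_{\bar t} \hookrightarrow Y$ be the fiber inclusion; it is a regular closed immersion of codimension $n$ which inherits $SS(\mathcal G)$-transversality from $g$. The short exact sequence $0 \to g^\ast \Omega^1_{T/k} \to \Omega^1_{Y/k} \to \Omega^1_{Y/T} \to 0$ restricted to $Y_{\bar t}$ yields a canonical identification $\ct(Y/T) \times_Y Y_{\bar t} \cong \ct Y_{\bar t}$, under which the cotangent differential $dj\colon \ct Y \times_Y Y_{\bar t} \to \ct Y_{\bar t}$ coincides with $\theta \times_Y \mathrm{id}_{Y_{\bar t}}$. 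The transversal pullback formula of Saito for closed immersions (the pullback counterpart of Proposition \ref{ccopenclose}(ii), cf. \cite{Saito15-2}) then yields
\[
CC(j^\ast \mathcal G) \;=\; (-1)^n (dj)_\ast\bigl(CC(\mathcal G) \times_Y Y_{\bar t}\bigr),
\]
the right-hand side being well-defined by the finiteness of $dj$ on $SS(\mathcal G) \times_Y Y_{\bar t}$. Using the identification above together with the base-change compatibility $(\theta_\ast CC(\mathcal G))_{\bar t} = (\theta \times_Y \mathrm{id})_\ast(CC(\mathcal G) \times_Y Y_{\bar t})$, valid because $\theta|_{SS(\mathcal G)}$ is finite, one concludes
\[
CC(\mathcal G|_{Y_{\bar t}}) \;=\; CC(j^\ast \mathcal G) \;=\; (-1)^n (\theta_\ast CC(\mathcal G))_{\bar t} \;=\; B_{\bar t},
\]
which verifies the defining property of the relative characteristic cycle.

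The main obstacle will be establishing the finiteness of $\theta|_{SS(\mathcal G)}$: the pointwise transversality condition does not obviously preclude positive-dimensional fibers of $\theta|_{C_\alpha}$, and one genuinely needs to exploit the $\mathbb G_m$-conical structure of $C_\alpha$ together with its equidimensionality over $T$ and Lagrangian dimension counting in $\ct(Y/T)$ to conclude. A secondary but essential point is invoking the correct version of Saito's transversal pullback formula with the exact sign $(-1)^n = (-1)^{\mathrm{codim}(Y_{\bar t} \hookrightarrow Y)}$, and verifying that the Gysin pullback $j^!$ agrees with the finite pushforward along $dj$ of the naive cycle-theoretic pullback of $CC(\mathcal G)$ to $\ct Y \times_Y Y_{\bar t}$.
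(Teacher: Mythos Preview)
Your approach and the paper's coincide at closed points: both apply Saito's transversal pullback formula \cite[Theorem 7.6]{Saito15-2} to the closed immersion $\iota_t\colon Y_t\hookrightarrow Y$ and then use finiteness of $\theta|_{SS(\cG)}$ to commute pushforward with restriction. Two remarks on your self-identified obstacles: first, the finiteness of $\theta|_{SS(\cG)}$ is not the hard step---it is an immediate consequence of the $SS(\cG)$-transversality of $g$ alone \cite[Lemma 3.1]{Saito15-2}, with no need for conicality or dimension counting. Second, to invoke \cite[Theorem 7.6]{Saito15-2} one must check that $\iota_t$ is \emph{properly} $SS(\cG)$-transversal in the sense of \cite[Definition 7.1]{Saito15-2}, i.e.\ that $SS(\cG)\times_YY_t$ has the expected dimension $d$; this is exactly where the equidimensionality-over-$T$ hypothesis enters, and the paper makes this verification explicit.

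The genuine gap is elsewhere. You write ``for each algebraic geometric point $\bar t\to T$ \ldots\ let $j\colon Y_{\bar t}\hookrightarrow Y$ be the fiber inclusion; it is a regular closed immersion of codimension $n$.'' This is false when $\bar t$ lies over a non-closed point of $T$: for instance over the generic point $\eta$, the fiber $Y_{\bar\eta}$ is a scheme over $\overline{k(\eta)}$ and the map $Y_{\bar\eta}\to Y$ is not a closed immersion at all, so Saito's pullback formula does not apply. Your argument therefore establishes $B_{\bar t}=CC(\cG|_{Y_{\bar t}})$ only at closed points of $T$, whereas the definition of the relative characteristic cycle demands it at \emph{all} algebraic geometric points. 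The paper closes this gap by invoking its own Theorem \ref{maintheocc}: that theorem produces, after a dominant quasi-finite base change $\pi\colon T'\to T$, a relative characteristic cycle over $T'$; since both this cycle and $\pi'^*B$ have components flat over $T'$ and agree at all closed points (by the computation you and the paper share), they must coincide as cycles, whence $B=CC(\cG,Y/T)$. Without some such input guaranteeing constructibility of the family $\bar t\mapsto CC(\cG|_{Y_{\bar t}})$, knowing the closed fibers does not determine the generic one.
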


\begin{proof}
 For any closed point $t$ of  $T$, we have the following Cartesian diagram
\begin{equation*}
  \xymatrix{\relax
 \mathbb T^*Y\ar[d]^{\theta}\ar@{}|-{\Box}[rd]&\mathbb T^*Y\times_YY_t\ar[l]_-(0.5){i'_t}\ar[d]^{\theta_t}\\
 \mathbb T^*(Y/T)&\mathbb T^*Y_t\ar[l]_-(0.5){i_t}}
\end{equation*}
where $i_t$ denotes the canonical injection. We may assume that $Y$ is of equidiemension $d+n$. Since $SS(\mathcal G)$ is equidimensional over $T$, for any closed point $t$ of $T$, the fiber product $SS(\mathcal G)\times_YY_t$ is of equidimension $d$. Since $g:Y\rightarrow T$ is $SS(\mathcal G)$-transversal, for any closed point $t$ of $T$, the canonical injection $\iota_t:Y_t\rightarrow Y$ is $SS(\mathcal G)$-transversal.  Hence, for any closed point $t\in T$, the canonical injection $\iota_t:Y_t\rightarrow Y$ is {\it propoerly} $SS(\mathcal G)$-{\it transversal} (\cite[Definition 7.1]{Saito15-2}). Then, for any closed point $t$ in $T$, we have (\cite[Theorem 7.6]{Saito15-2})
\begin{equation}\label{saito7.6}
CC(\mathcal G|_{Y_t})=(-1)^n\theta_{t*}(i'^*_t(CC(\mathcal G))).
\end{equation}
The restriction map $\theta:SS(\mathcal G)\rightarrow \theta(SS(\mathcal G))$ is finite, since $g:Y\rightarrow T$ is $SS(\mathcal G)$-transversal (\cite[Lemma 3.1]{Saito15-2}). Then, each irreducible component of $\theta(SS(\mathcal G))$ is also open and equidimensional over $T$. For any closed point $t$ of $T$, we have an equality of cycles
\begin{equation}\label{saito3.1}
  \theta_{t*}(i'^*_t(CC(\mathcal G)))=i^*_t(\theta_*(CC(\mathcal G))).
\end{equation}
on $\mathbb T^*Y_t$.  We put $B=(-1)^n\theta_*(CC(\mathcal G))$. By \eqref{saito7.6} and \eqref{saito3.1},
for any closed point $t$ of $T$, we have
\begin{equation}\label{closedfiberB}
 CC(\mathcal G|_{Y_t})=i^*_tB.
\end{equation}
By Theorem \ref{maintheocc},  there exists an integral $k$-scheme $T'$ and a dominant and quasi-finite morphism $\pi:T'\rightarrow T$ such that $CC(\mathcal G|_{Y\times_TT'},(Y\times_TT')/T')$ exists. By \eqref{closedfiberB} and the definition of relative characteristic cycles, we must have
\begin{equation}
\pi'^*B=CC(\mathcal G|_{Y\times_TT'},(Y\times_TT')/T'),
\end{equation}
where $\pi':\mathbb T^*(((Y\times_TT')/T')\rightarrow \mathbb T^*(Y/T)$ denotes the projection induced by $\pi:T'\rightarrow T$.
Hence, $B=CC(\mathcal G,Y/T)$. 
\end{proof}

\section{Failure of the lower semi-continuity for characteristic cycles}

\subsection{} In this section, let $k$ denotes an algebraically closed field of characteristic $p\geq 3$, $S=\mathrm{Spec}(k[u])$,  $X=\mathrm{Spec}(k[x,y,z])$, $D=(z)$, $U=X-D$ and $f:X\rightarrow S$ the projection associated to
\begin{equation*}
k[u]\rightarrow k[x,y,z],\ \ \ u\mapsto x.
\end{equation*}

\begin{example}\label{ssnotconst}
 Let $\cG$ be a locally constant and constructible sheaf of $\Lambda$-modules of rank $1$ on $U$ associated to the Artin-Schreier covering defined by $t^p-t=(xy)/z^p$ and we put $\cF=j_!\cG[2]$. For each $s\in S(k)$, we have $X_s=\mathrm{Spec}(k[y,z])$ and $U_s=\mathrm{Spec}(k[y,z,z^{-1}])$. The restriction $\cG|_{U_{s}}$ is a locally constant and constructible sheaf of $\Lambda$-modules of rank $1$ on $U_s$ associated to the Artin-Schreier covering defined by $t^p-t=(sy)/z^p$. Notice that $\cG|_{U_s}$ is the constant sheaf $\Lambda_{U_s}$ when $s=0$.

We have (\cite[Example 2.2]{Saito15-1})
\begin{equation*}
SS(\cF|_{X_s})=\left\{\begin{array}{ll}
\bT^*_{X_{s}}X_s \bigcup D_s\cdot\langle dz\rangle,&\text{if}\ \ s=0,\\
\bT^*_{X_s}X_s\bigcup D_s\cdot\langle dy\rangle,&\text{if}\ \ s\neq 0,
\end{array}
\right.
\end{equation*}
where $D_s\cdot\langle dz\rangle$ (resp. $D_s\cdot\langle dy\rangle$) denotes the sub-line bundle of $\bT^*X_s\times_{X_s}D_s$ spanned by the section $dz$ (resp. $dy$). We see that part (i) of Theorem \ref{introdellau} cannot be generalized to  $\cF$.

\end{example}
\begin{example}\label{ccnotconst}
We take the sheaf $\cG$ in Example \ref{ssnotconst}. Let $\cG_1$ be the constant sheaf $\Lambda_{U}$ and $\cG_2$ the locally constant sheaf of $\Lambda$-modules of rank $1$ on $U$ associated to the Artin-Schreier covering defined by $t^p-t=y/z^p$. We put $\cF'=j_!(\cG_1\oplus\cG_2\oplus\cG)[2]$. For any closed point $s\in S$, we have
\begin{equation*}
SS(\cF'|_{X_s})=\bT^*_{X_s}X_s\bigcup D_s\cdot\langle dy\rangle\bigcup D_s\cdot\langle dz\rangle.
\end{equation*}
We denote by $D\cdot\langle dy\rangle$ (resp. $D\cdot\langle dz\rangle$) the sub-line bundle of $\bT^*(X/S)\times_XD$ spanned by the section $dy$ (resp. $dz$). Hence, the closed subset
\begin{equation*}
A=\bT^*_{X}(X/S)\bigcup D\cdot\langle dy\rangle\bigcup D\cdot\langle dz\rangle
\end{equation*}
of $\bT^*(X/S)$ satisfies part (i) of Theorem \ref{introdellau} for $\cF'$ in the higher relative dimension situation. However, by \cite[Example 3.6]{Saito13}, we have
\begin{equation*}
CC(\cF'|_{X_s})=\left\{\begin{array}{ll}
3[\bT^*_{X_s}X_s]+2[D_s\cdot\langle dz\rangle]+p[D_s\cdot\langle dy\rangle],&\text{if}\ \ s=0,\\
3[\bT^*_{X_s}X_s]+[D_s\cdot\langle dz\rangle]+2p[D_s\cdot\langle dy\rangle],&\text{if}\ \ s\neq0.
\end{array}\right.
\end{equation*}
It implies that $\cF'$ does not have a lower semi-continuous property at the origin of $S$. Hence, we cannot find a finite and surjective map $f:S'\rightarrow S$ and a cycle $B$ supported on $A\times_SS'$ that generalize part (iii) of Theorem \ref{introdellau} for $\cF'$. 
\end{example}


\begin{thebibliography}{20}
\bibitem[AS02]{Abbes-Saito02}
A. Abbes and T. Saito, \emph{Ramification of local fields with imperfect residue fields}. Amer. J. Math. {\bf 124}, (2002), 879--920.



\bibitem[An]{Andre}
Y. Andre, \emph{Structure des connexions m\'eromorphes formelles de plusieurs variables et semi-continuit\'e de l'irr\'egularit\'e}, Invent. Math. {\bf 170}, (2007), 147--198.


\bibitem[B15]{Beilinson15}
A. Beilinson, \emph{Constructible sheaves are holonomic}.  Selecta Math.
 {\bf 22}, Issue 4, (2016), 1797--1819.
 

\bibitem[Bry]{Bry}
J.-L. Brylinski, \emph{Transformations canoniques, dualit\'e projective, th\'eorie de Lefschetz, transformations de Fourier et sommes trigonom\'etriques}, Ast\'erisque {\bf 140--141}, (1986), 3--134.


\bibitem[SGA4$\frac{1}{2}$]{SGA4.5}
P. Deligne {\it et al.}, \emph{Cohomologie \'Etale}. Lecture Notes in Math. {\bf 569}, Springer-Verlag, (1977).

\bibitem[De]{letterkatz}
P. Deligne, \emph{Letter to Katz (Dec. 1, 1976)}, Singularit\'es irr\'eguli\`eres: correspondance et documents, Documents Math\'ematiques {\bf 5}, (2007).

\bibitem[Du]{Dubson}
A. Dubson, \emph{Classes caract\'eristiques  des var\'et\'es singuli\`eres}, C. R. Acad. Sci. Paris {\bf 287}, (1978), 237--240.



\bibitem[HY]{HuYang}
H. Hu and E. Yang, \emph{Semi-continuity for total dimension divisors of \'etale sheaves},  Internat. J. Math. {\bf 28}, no.~1, (2017), 21 pp. 

\bibitem[L81]{Laumon81}
G. Laumon, \emph{Semi-continuit\'e du conducteur de Swan (d'apr\`es Deligne)}. S\'eminaire E.N.S. (1978-1979) Expos\'e 9, Ast\'erisque {\bf 82-83}, (1981), 173-219. 

\bibitem[Fu]{Fu15}
L. Fu, \emph{Etale Cohomology Theory}, Nankai Tracts in Mathematics Vol.{\bf 14}, revised Edition,  2015.

\bibitem[Ful98]{Fulton98}
W. Fulton, \emph{Intersection theory}. Springer-Verlag, Berlin, second edition, 1998.

 \bibitem[EGA III]{EGA3}
 A. Grothendieck and J. Dieudonn\'e, \emph{\'El\'ements de g\'eom\'etrie alg\'ebrique: III \'Etude cohomologique des faisceaux coh\'erents.} Publ. Math. Inst. Hautes Sci. {\bf 11}, (1961), 5--167; {\bf 17}, (1963). 5--91.


\bibitem[EGA IV]{EGA4}
A. Grothendieck and J. Dieudonn\'e, \emph{\'El\'ements de g\'eom\'etrie alg\'ebrique: IV \'Etude locale des sch\'emas et des morphismes de sch\'emas}. Publ. Math. Inst. Hautes Sci. {\bf 20}, (1964), 5--259; {\bf 24}, (1965), 5--231; {\bf 28}, (1966), 5--255; {\bf 32}, (1967), 5--361.

\bibitem[SGA5]{sga5}
A. Grothendieck {\it et al.}, \emph{Cohomologie $\ell$-adique et fonctions L.}
S\'eminaire de G\'eom\'etrie Alg\'ebrique du Bois-Marie 1965--1966 (SGA 5).
dirig\'e par A. Grothendieck avec la collaboration de I. Bucur, C. Houzel, L. Illusie, J.-P. Jouanolou et J-P. Serre. Lecture Notes in Mathematics {\bf 589}, Springer-verlag, Berlin-New York, (1977).



\bibitem[Ka83]{ka83}
M. Kashiwara, \emph{Systems of microdifferential equations}. Progress in Math. {\bf 34}. Birkhauser, Boston, 1983.

\bibitem[O]{Orgogozo}
F. Orgogozo, \emph{Modifications et cycles proches sur une base g\'en\'erale}. Int. Math. Res. Not. 2006, Art. ID 25315, 38 pp.

\bibitem[Sa13]{Saito13}
T. Saito, \emph{Wild ramification and the contangent bundle}. J. Algebraic Geometry., Article electronically published on September 19, 2016.



\bibitem[Sa15]{Saito15-1}
T. Saito, \emph{Characteristic cycle and the Euler number of a constructible sheaf on a surface}. Kodaira Centennial issue of the Journal of Mathematical Sciences, the University of Tokyo, (2015) 22: 387-442.


\bibitem[Sa16]{Saito15-2}
T. Saito, \emph{The characteristic cycle and the singular support of a constructible sheaf}. Invent. Math. Published online 15 July 2016.


\bibitem[Se97]{Serre97}
J.P. Serre, \emph{Linear representations of finite groups}. Graduate Texts in Mathematics 42. Springer-Verlag, New York-Heidelberg, 1997.



\end{thebibliography}
\end{document}